\documentclass[11pt, reqno]{amsart}
\usepackage{amsrefs, amsmath, amssymb, amsfonts, amstext, verbatim, amsthm}
\usepackage{graphicx}
\usepackage[all]{xy}
\usepackage{color}
\usepackage{enumitem}
\usepackage{tikz}
\usepackage{caption,subcaption}
\usepackage{float}
\usepackage{epsfig}


\usepackage[colorlinks=true, linkcolor=blue, citecolor=red]{hyper ref}

\usepackage{fullpage}

\theoremstyle{plain}
\newtheorem{thm}{Theorem}[section]
\newtheorem{lem}[thm]{Lemma}

\newtheorem{cor}[thm]{Corollary}

\newtheorem{prop}[thm]{Proposition}

\newtheorem{observation}[thm]{Observation}
\newtheorem*{thm*}{Theorem}

\newtheorem{sumthm}[thm]{Summary Theorem}

\theoremstyle{definition}
\newtheorem{rem}[thm]{Remark}
\newtheorem{defn}[thm]{Definition}
\newtheorem{ex}[thm]{Example}


%
{\end{minipage} \end{center}}


\newcommand{\bA}{\mathbf{A}}
\newcommand{\cA}{\mathcal{A}}

\newcommand{\cB}{\mathcal{B}}

\newcommand{\bC}{\mathbf{C}}
\newcommand{\cC}{\mathcal{C}}

\newcommand{\bD}{\mathbf{D}}

\newcommand{\cE}{\mathcal{E}}

\newcommand{\cF}{\mathcal{F}}

\newcommand{\cG}{\mathcal{G}}

\newcommand{\cH}{\mathcal{H}}

\newcommand{\cK}{\mathcal{K}}

\newcommand{\bL}{\mathbf{L}}
\newcommand{\cL}{\mathcal{L}}

\newcommand{\cM}{\mathcal{M}}

\newcommand{\bN}{\mathbf{N}}

\newcommand{\cO}{\mathcal{O}}

\newcommand{\bP}{\mathbf{P}}
\newcommand{\cP}{\mathcal{P}}

\newcommand{\bR}{\mathbf{R}}

\newcommand{\cV}{\mathcal{V}}

\newcommand{\fX}{\mathfrak{X}}

\newcommand{\bZ}{\mathbf{Z}}





\newcommand{\id}{\mathop{{\rm id}}\nolimits}

\renewcommand{\bR}{{\mathbb R}}
\renewcommand{\bC}{{\mathbb C}}
\renewcommand{\bZ}{{\mathbb Z}}

\newcommand{\op}[1]{\!\!\mathop{\rm ~#1}\nolimits}

\renewcommand{\bP}{\mathbb{P}}
\renewcommand{\bA}{\mathbb{A}}
\newcommand{\pt}{\ast}

\newcommand{\lie}[1]{\mathfrak{#1}}

\newcommand{\inner}[1]{{\underline{#1}}}

\newcommand{\dual}{\vee}


\newcommand{\X}{\fX}

\newcommand{\ladj}[1]{\beta_{< #1}}
\newcommand{\G}{\cG}
\newcommand{\D}{\op{D}}
\newcommand{\bdot}{{\begin{picture}(4,4)\put(2,3){\circle*{1.5}}\end{picture}}}

\newcommand{\parr}{\dashrightarrow}

\def\lmut{\mathbb{L}}
\def\rmut{\mathbb{R}}

\def\tensor{\otimes}

\DeclareMathOperator{\SL}{SL}
\DeclareMathOperator{\Perf}{\mathfrak{Perf}}
\def\blt{\bdot}
\newcommand{\sod}[1]{\langle #1 \rangle}

\begin{document}

\title{Autoequivalences of derived categories via geometric invariant theory}

%


\author{Daniel Halpern-Leistner, Ian Shipman}

\begin{abstract}
We study autoequivalences of the derived category of coherent sheaves of a variety arising from a variation of GIT quotient. We show that these autoequivalences are spherical twists, and describe how they result from mutations of semiorthogonal decompositions. Beyond the GIT setting, we show that all spherical twist autoequivalences of a dg-category can be obtained from mutation in this manner.

Motivated by a prediction from mirror symmetry, we refine the recent notion of ``grade restriction rules" in equivariant derived categories. We produce additional derived autoequivalences of a GIT quotient and propose an interpretation in terms of monodromy of the quantum connection. We generalize this observation by proving a criterion under which a spherical twist autoequivalence factors into a composition of other spherical twists.

\noindent \textbf{Key words:} derived categories of coherent sheaves; geometric invariant theory
\end{abstract}

\maketitle


\tableofcontents

\section{Introduction}

Homological mirror symmetry predicts, in certain cases, that the bounded derived category of coherent sheaves on an algebraic variety should admit \emph{twist autoequivalences} corresponding to a spherical object \cite{ST01}. The autoequivalences predicted by mirror symmetry have been widely studied, and the notion of a spherical object has been generalized to the notion of a spherical functor \cite{A} (See Definition \ref{def:spherical_functor}). We apply recently developed techniques for studying the derived category of a geometric invariant theory (GIT) quotient \cite{BFK,DS,HL12,HHP09,Se} to the construction of autoequivalences, and our investigation leads to general connections between the theory of spherical functors and the theory of semiorthogonal decompositions and mutations.

We consider an algebraic stack which arises as a GIT quotient of a smooth quasiprojective variety $X$ by a reductive group $G$. By varying the $G$-ample line bundle used to define the semistable locus, one gets a birational transformation $X^{ss}_- / G \dashrightarrow X^{ss}_+ /G$ called a variation of GIT quotient (VGIT). We study a simple type of VGIT, which we call a \emph{balanced wall crossing} (See Section \ref{sect:mutations}).

Under a hypothesis on $\omega_X$, a balanced wall crossing gives rise to an equivalance $\psi_w : \D^b(X^{ss}_-/G) \to \D^b(X^{ss}_+/G)$ which depends on a choice of $w\in \bZ$, and the composition $\Phi_w := \psi_{w+1}^{-1} \psi_w$ defines an autoequivalence of $\D^b(X^{ss}_-/G)$. Autoequivalences of this kind have been studied recently under the name window-shifts \cite{DS,Se}. We generalize the observations of those papers in showing that $\Phi_w$ is always a spherical twist.

Recall that if $B$ is an object in a dg-category, then we can define the twist functor
$$T_B : F \mapsto \op{Cone} (\op{Hom}^\bdot(B,F) \otimes_\bC B \to F)$$
If $B$ is a spherical object, then $T_B$ is by definition the spherical twist autoequivalence defined by $B$. More generally, if $S : \cA \to \cB$ is a spherical functor (Definition \ref{def:spherical_functor}), then one can define a twist autoequivalence $T_S := \op{Cone} (S\circ S^R \to \id_\cB)$ of $\cB$, where $S^R$ denotes the right adjoint. Throughout this paper we refer to a twist autoequivalence corresponding to a spherical functor simply as a ``spherical twist." A spherical object corresponds to the case where $\cA = \D^b(k-\op{vect})$.

It was noticed immediately \cite{ST01} that if $B$ were instead an exceptional object, then $T_B$ is the formula for the left mutation equivalence ${}^\perp B \to B^\perp$ coming from a pair of semiorthogonal decompositions $\sod{B^\perp, B} = \sod{B, {}^\perp B}$.\footnote{Such semiorthogonal decompositions exist when $\op{Hom}^\bdot(F^\bdot, B)$ has finite dimensional cohomology for all $F^\bdot$.} In fact, we will show that there is more than a formal relationship between spherical functors and mutations. If $\cC$ is a pre-triangulated dg category, then the braid group on $n$-strands acts by left and right mutation on the set of length $n$ semiorthogonal decompositions $\cC = \sod{\cA_1,\ldots,\cA_n}$ with each $\cA_i$ admissible. Mutating by a braid gives equivalences $\cA_i \to \cA^\prime_{\sigma(i)}$, where $\sigma$ is the permutation that the braid induces on end points. In particular if one of the semiorthogonal factors is the same subcategory before and after the mutation, one gets an autoequivalence $\cA_i \to \cA_i$.

\begin{sumthm}[spherical twist=mutation=window shifts] \label{sumthm:spherical_twists}

If $\cC$ is a pre-triangulated dg category admitting a semiorthogonal decomposition $\cC = \sod{\cA,\cG}$ which is fixed by the braid (acting by mutations)
\begin{center}
\includegraphics[scale=.3]{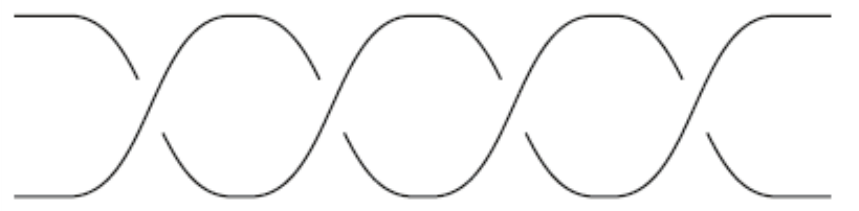}
\end{center}
then the autoequivalence of $\cG$ induced by mutation is the twist $T_S$ corresponding to a spherical functor $S : \cA \to \cG$ (Theorem \ref{thm:mut_are_sph}). Conversely, if $S : \cA \to \cB$ is a spherical functor, then there is a larger category $\cC$ admitting a semiorthogonal decomposition fixed by this braid which recovers $S$ and $T_S$ (Theorem \ref{thm:sph_fun_are_mutations}).

In the context of a balanced GIT wall crossing, the category $\cC$ arises naturally as a subcategory of the equivariant category $\D^b(X/G)$, defined in terms of ``grade restriction rules" (Section \ref{sect:background}). The resulting autoequivalence agrees with the window shift $\Phi_w$ (Proposition \ref{prop:window_shift_formula}) and corresponds to a spherical functor $f_w : \D^b(Z/L)_w \to \D^b(X^{ss}_-/G)$, where $Z/L$ is the ``critical locus" of the VGIT, which is unstable in both quotients (Section \ref{sect:mutations}).
\end{sumthm}

In the second half of the paper we revisit the prediction of derived autoequivalences from mirror symmetry. Spherical twist autoequivalences of $\D^b(V)$ for a Calabi-Yau $V$ correspond to loops in the moduli space of complex structures on the mirror Calabi-Yau $V^\dual$, and flops correspond, under the mirror map, to certain paths in that complex moduli space. We review these predictions, first studied in \cite{Ho05} for toric varieties, and formulate corresponding predictions for flops coming from VGIT in which an explicit mirror may not be known.

By studying toric flops between toric Calabi-Yau varieties of Picard rank $2$ (Section \ref{sect:toric}), we find that mirror symmetry predicts more autoequivalences than constructed in Theorem \ref{sumthm:spherical_twists}. The expected number of autoequivalences agrees with the length of a full exceptional collection on the critical locus $Z/L$ of the VGIT. Motivated by this observation, we introduce a notion of ``fractional grade restriction windows" given the data of a semiorthogonal decomposition on the critical locus. This leads to

\begin{sumthm}[Factoring spherical twists]
Given a full exceptional collection $\D^b(Z/L)_w = \langle E_0,\ldots,E_n \rangle$, the objects $S_i := f_w(E_i) \in \D^b(X^{ss}_-/G)$ are spherical, and (Corollary \ref{cor:factoring_twists_FEC})
$$\Phi_w = T_{S_0} \circ \cdots T_{S_n}.$$
This is a general phenomenon as well. Let $S = \cE \to \cG$ be a spherical functor of dg-categories and let $\cE = \sod{\cA,\cB}$ be a semiorthogonal decomposition such that there is also a semiorthogonal decomposition $\cE = \sod{F_S (\cB), \cA}$, where $F_S$ is the cotwist autoequivalence of $\cE$ induced by $S$. Then the restrictions $S_\cA : \cA \to \cG$ and $S_\cB : \cB \to \cG$ are spherical as well, and $T_S \simeq T_{S_\cA} \circ T_{S_\cB}$ (Theorem \ref{thm:factorize}).
\end{sumthm}

We propose an interpretation of this factorization theorem in terms of monodromy of the quantum connection in a neighborhood of a partial large volume limit (Section \ref{sect:fract_shifts}).

\subsection{Acknowledgements}

The first author would like to thank the attendees of the School on Algebraic Geometry and Theoretical Physics at the University of Warwick, July 2012 for many stimulating mathematical conversations, with special thanks to Will Donovan, Ed Segal, and Timothy Logvinenko for explaining their work. He also benefited greatly from conversations with Kentaro Hori, Paul Horja, Hiroshi Iritani, Lev Borisov, and Alexander Kuznetsov. The second author was partially supported by National Science Foundation award DMS-1204733. The first author was partially supported by National Science Foundation award DMS-1303960, by Columbia University, and by the Institute for Advanced Study.


\section{Derived Kirwan surjectivity} \label{sect:background}

In this section we fix our notation and recall the theory of derived Kirwan surjectivity developed in \cite{HL12}. We also introduce the category $\cC_w$ and its semiorthogonal decompositions, which will be used throughout this paper.

We consider a smooth projective-over-affine variety $X$ over an algebraically closed field $k$ of characteristic $0$, and we consider a reductive group $G$ acting on $X$. Given a $G$-ample equivariant line bundle $L$, geometric invariant theory defines an open semistable locus $X^{ss} \subset X$. After choosing an invariant inner product on the cocharacter lattice of $G$, the Hilbert-Mumford numerical criterion produces a special stratification of the unstable locus by locally closed $G$-equivariant subvarieties $X^{us} = \bigcup_i S_i$ called Kirwan-Ness (KN) strata. The indices are ordered so that the closure of $S_i$ lies in $\bigcup_{j \geq i} S_j$.

Each stratum comes with a distinguished one-parameter subgroup $\lambda_i : \bC^\ast \to G$ and $S_i$ fits into the diagram
\begin{equation} \label{eqn:main_stratum_diagram}
\xymatrix{Z_i \ar@/^/[r]^-{\sigma_i} & Y_i \subset S_i := G \cdot Y_i \ar@/^/[l]^-{\pi_i}  \ar[r]^(.7){j_i} & X},
\end{equation}
where $Z_i$ is an open subvariety of $X^{\lambda_i \text{ fixed}}$, and $$Y_i = \left\{ \left. x \in X - \bigcup_{j>i} S_j \right| \lim_{t \to 0} \lambda_i(t) \cdot x \in Z_i \right\}.$$
$\sigma_i$ and $j_i$ are the inclusions and $\pi_i$ is taking the limit under the flow of $\lambda_i$ as $t\to 0$. We denote the immersion $Z_i \to X$ by $\sigma_i$ as well. Throughout this paper, the spaces $Z,Y,S$ and morphisms $\sigma,\pi,j$ will refer to diagram \ref{eqn:main_stratum_diagram}.

In addition, $\lambda_i$ determines the parabolic subgroup $P_i$ of elements of $G$ which have a limit under conjugation by $\lambda_i$, and the centralizer of $\lambda_i$, $L_i \subset P_i \subset G$, is a Levi component for $P_i$. One key property of the KN stratum is that $S_i = G \times_{P_i} Y_i$, so that $G$ equivariant quasicoherent sheaves on $S_i$ are equivalent to $P_i$-equivariant quasicoherent sheaves on $Y_i$. When $G$ is abelian, then $G=P_i=L_i$, and $Y_i = S_i$ is already $G$ invariant, so the story simplifies quite a bit.

\begin{thm}[derived Kirwan surjectivity, \cite{HL12}] \label{thm:derived_Kirwan_surjectivity}
Let $\eta_i$ be the weight of $\det(N^\dual_{S_i} X)|_{Z_i}$ with respect to $\lambda_i$. Choose an integer $w_i$ for each stratum and define the full subcategory
$$\G_w := \{ F^\bdot \in \D^b(X/G) | \forall i,  \sigma_i^\ast F^\bdot \text{ has weights in } [w_i,w_i+\eta_i) \text{ w.r.t. }\lambda_i \}.$$
Then the restriction functor $r : \G_w \to \D^b(X^{ss}/G)$ is an equivalence of dg-categories.
\end{thm}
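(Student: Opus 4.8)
The plan is to argue by induction along the Kirwan--Ness stratification, reduce to the case of a single closed stratum, and then identify $r|_{\G_w}$ with the canonical functor from a ``grade restriction window'' to a Verdier quotient; all of the geometry goes into producing semiorthogonal decompositions of $\D^b(X/G)$ adapted to the stratum. For the reduction: since the $S_i$ are disjoint, $G$-invariant, and ordered by closure, $X/G$ is filtered by $G$-invariant open substacks $X^{ss}/G=\cU_1\subset\cU_2\subset\dots\subset\cU_{n+1}=X/G$ with $\cU_{i+1}\setminus\cU_i=S_i/G$ closed in $\cU_{i+1}$, and the condition defining $\G_w$ is the conjunction of the one-stratum conditions on the $\sigma_i^\ast$, each unaffected by passage to a smaller $G$-invariant open substack containing $S_i$. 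Thus it suffices to treat the case of a single closed KN stratum $S$, with diagram $Z\xrightarrow{\sigma}Y\subset S\xrightarrow{j}X$, data $\lambda$, $P\supset L$, and $\eta=$ the $\lambda$-weight of $\det(N^\dual_S X)|_Z$, and with $X^{ss}=X\setminus S$; the general statement then follows by composing the $n$ resulting equivalences and transporting the remaining weight conditions along them.

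Let $\iota\colon X^{ss}\hookrightarrow X$ be the open immersion. Then $r=\iota^\ast$ is a Verdier localization with kernel $\D^b_S(X/G)$, the subcategory of complexes supported set-theoretically on $S$, so $r$ factors through an equivalence $\D^b(X/G)/\D^b_S(X/G)\xrightarrow{\ \sim\ }\D^b(X^{ss}/G)$. Using $S=G\times_P Y$ (whence $\D^b(S/G)\simeq\D^b(Y/P)$) and the $P$-equivariant affine-bundle structure $\pi\colon Y\to Z$ with section $\sigma$, one shows that $\D^b_S(X/G)$ is generated by the objects $j_\ast\pi^\ast M$, $M\in\D^b(Z/L)$, and that $L\sigma^\ast j_\ast\pi^\ast M\simeq\bigl(\textstyle\bigwedge^{\bullet}N^\dual_S X|_Z\bigr)\otimes M$ is a Koszul complex whose $\lambda$-weights fill a \emph{closed} interval of width exactly $\eta$ --- all weights of $N^\dual_S X|_Z$ having the same sign, which is the defining feature of a KN stratum. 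This equips $\D^b_S(X/G)$ with a ``baric'' filtration with slices equivalent to $\D^b(Z/L)_w$, and already yields $\G_w\cap\D^b_S(X/G)=0$: any nonzero object of $\D^b_S(X/G)$ contributes, via some nonzero $j_\ast\pi^\ast M$, weights of $\sigma^\ast$ spanning a closed interval of length $\eta$, which cannot lie inside the half-open window of length $\eta$.

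The technical heart is to promote this baric structure on the kernel to the ambient category: one must prove that suitable weight-truncated subcategories of $\D^b_S(X/G)$ are admissible in $\D^b(X/G)$, producing semiorthogonal decompositions of $\D^b(X/G)$ together with truncation functors $\radj{w}$, $\ladj{w}$ defined on all of $\D^b(X/G)$, with $\G_w$ realized as the common window --- the objects annihilated by $\ladj{w}$ and by $\radj{w+\eta}$. This is where one must control complexes that are \emph{not} supported on $S$, and it is the step that uses the explicit local analysis of \cite{HL12}: one resolves $L\sigma^\ast$ along $j$ in an \'etale (or formal) neighbourhood of $Z$ by Koszul-type complexes and tracks $\lambda$-weights, again using $S=G\times_P Y$ to reduce the non-abelian case to the centraliser $L$. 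I expect this to be the only genuinely difficult step.

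Granting it, the theorem follows formally. The functor $r$ is essentially surjective on $\D^b(X/G)$ (extend any coherent complex from $X^{ss}/G$ to $X/G$), and applying $\radj{w}$ and then $\ladj{w+\eta}$ to any lift of a given object of $\D^b(X^{ss}/G)$ produces an object of $\G_w$ with the same restriction to $X^{ss}$, since each truncation changes an object only by a term lying in $\D^b_S(X/G)$, which $r$ kills --- compatibility of the two truncations being part of the structural step. For full faithfulness, given $F,F'\in\G_w$, the local-cohomology triangle $R\underline{\Gamma}_S F'\to F'\to R\iota_\ast\iota^\ast F'$ together with the adjunction $\op{Hom}_{X/G}(F,R\iota_\ast\iota^\ast F')\simeq\op{Hom}_{X^{ss}/G}(rF,rF')$ reduces the claim to the vanishing of $\op{Hom}_{X/G}(F,R\underline{\Gamma}_S F'[k])$ for all $k$; and because $F'$ lies in the window, the weights of $R\underline{\Gamma}_S F'$ are constrained so that it sits in a part of $\D^b_S(X/G)$ admitting no maps into (shifts of) $\G_w$, by the semiorthogonality of the previous step. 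Finally, every construction above --- Verdier localization, admissible subcategories, the truncation functors --- is dg-functorial, so the resulting equivalence of homotopy categories is induced by a quasi-equivalence of dg-categories, which is the assertion.
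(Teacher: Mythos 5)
Your proposal follows the same route as the paper: the theorem is recalled from \cite{HL12}, and the paper's only proof content is precisely your first step, the reduction to a single closed KN stratum via the chain of open substacks $X^{ss}\subset X_n\subset\cdots\subset X$ obtained by deleting strata one at a time. Your outline of the single-stratum case (restriction as a Verdier quotient with kernel $\D^b_S(X/G)$, the Koszul computation of $\sigma^\ast j_\ast \pi^\ast M$ producing a closed weight interval of width exactly $\eta$ and hence $\G_w\cap\D^b_S(X/G)=0$, and the baric/semiorthogonal truncation functors yielding essential surjectivity and, via the local-cohomology triangle, full faithfulness) correctly reproduces the argument of the cited reference, with the one genuinely hard step --- admissibility of the baric truncations in all of $\D^b(X/G)$ --- honestly flagged and deferred to \cite{HL12}, exactly as the paper itself defers it.
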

The weight condition on $\sigma_i^\ast F^\bdot$ is called the \emph{grade restriction rule} and the interval $[w_i,w_i+\eta_i)$ is the \emph{grade restriction window}.  The theorem follows immediately from the corresponding statement for a single closed KN stratum by considering the chain of open subsets $X^{ss} \subset X_n \subset \cdots \subset X_0 \subset X$ where $X_i = X_{i-1} \setminus S_i$.

The full version of the theorem also describes the kernel of the restriction functor $r : \D^b(X/G) \to \D^b(X^{ss}/G)$. For a single stratum $S$ we define the full subcategory
$$\cA_w := \left\{ F^\bdot \in \D^b(X/G) \left| \begin{array}{c} \cH^\ast (\sigma^\ast F^\bdot) \text{ has weights in }[w,w+\eta] \text{ w.r.t. } \lambda \\ \cH^\ast (F^\bdot) \text{ supported on } S \end{array} \right. \right\}$$
we have an infinite semiorthogonal decomposition
$$\D^b(X/G) = \langle \ldots, \cA_{w-1}, \cA_w, \G_w , \cA_{w+1}, \ldots \rangle$$
This means that the subcategories are disjoint, semiorthogonal (there are no $R\op{Hom}$'s pointing to the left), and that every object has a functorial filtration whose associated graded pieces lie in these subcategories (ordered from right to left).\footnote{The early definitions of semiorthogonal decompositions required the left and right factors to be admissible, but this requirement is not relevant to our analysis. The notion we use is sometimes referred to as a \emph{weak semiorthogonal decomposition}} These categories are not obviously disjoint, but it is a consequence of the theory that no non-zero object supported on $S$ can satisfy the grade restriction rule defining $\G_w$.

Let $\D^b(Z/L)_w \subset \D^b(Z/L)$ denote the full subcategory which has weight $w$ with respect to $\lambda$, and let $(\bullet)_w$ be the exact functor taking the summand with $\lambda$ weight $w$ of a coherent sheaf on $Z/L$.
\begin{lem}[\cite{HL12}] \label{lem:unstable_category}
The functor $\iota_w : \D^b(Z/L)_w \to \cA_w$ is an equivalence, and its inverse can be described either as $(\sigma^\ast F^\bdot)_w$ or as $(\sigma^\ast F^\bdot)_{w+\eta} \otimes \det(N_S X)$.
\end{lem}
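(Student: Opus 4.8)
The plan is to write down $\iota_w$ explicitly, check by a Koszul computation that $(\sigma^\ast(-))_w$ is a one-sided inverse, and then get essential surjectivity by a dévissage reducing to a statement about coherent sheaves on the contracting affine bundle $\pi : Y \to Z$. To pin down $\iota_w$: since $S = G\times_P Y$ we have $\D^b(S/G) = \D^b(Y/P)$, the map $\pi : Y \to Z$ is $P$-equivariant once $P$ acts on $Z$ through $P \twoheadrightarrow L$, and $\sigma : Z \to Y$ is a section, so $\sigma^\ast \pi^\ast = \id$. An object $E \in \D^b(Z/L)_w$ has a single $\lambda$-weight, hence inflates canonically along $P \twoheadrightarrow L$ to $\D^b(Z/P)$, and I set $\iota_w(E) := j_\ast \pi^\ast E$, which has cohomology supported on $S$ by construction.

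Next I would verify that $\iota_w$ lands in $\cA_w$ and that $(\sigma^\ast(-))_w \circ \iota_w \simeq \id$. The input is smoothness of $S$ and $X$: $L j^\ast j_\ast M \simeq M \otimes_{\cO_S} \bigwedge\nolimits^{\bdot} N^\dual_S X$ with $\bigwedge^i$ placed in cohomological degree $-i$, so restricting further along $Z \hookrightarrow S$ and using $\sigma^\ast\pi^\ast E = E$ gives
$$\sigma^\ast \iota_w(E)\ \simeq\ E \otimes_{\cO_Z} \bigwedge\nolimits^{\bdot}\!\big(N^\dual_S X|_Z\big).$$
Because $T_S X|_Z$ is spanned by $(T_X|_Z)_{\geq 0}$ together with the image of $\mathfrak{g}_{<0}$, the bundle $N_S X|_Z$ is a quotient of $(T_X|_Z)_{<0}$, so $N^\dual_S X|_Z$ has strictly positive $\lambda$-weights summing to $\eta$. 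Hence $\bigwedge\nolimits^{\bdot}(N^\dual_S X|_Z)$ is concentrated in $\lambda$-weights $0,1,\dots,\eta$, with weight-$0$ part $\cO_Z$ in cohomological degree $0$ and weight-$\eta$ part $\det(N^\dual_S X|_Z)$ in top degree. Thus $\sigma^\ast\iota_w(E)$ has cohomology in weights $[w,w+\eta]$, i.e. $\iota_w(E) \in \cA_w$; extracting the weight-$w$ summand picks out the $\cO_Z$-factor and yields $(\sigma^\ast\iota_w E)_w \cong E$, while extracting the weight-$(w+\eta)$ summand and twisting by $\det(N_S X)$ gives the same answer---which explains the two asserted descriptions of the inverse. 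In particular $\iota_w$ is fully faithful.

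The remaining step, and the main obstacle, is essential surjectivity: for $F \in \cA_w$ the counit $\iota_w\big((\sigma^\ast F)_w\big) \to F$ must be an isomorphism. Here I would argue by dévissage, using the $t$-structure---with some care, since $\sigma^\ast$ is only right $t$-exact---and the support condition to reduce to $F = j_\ast M$ with $M$ a single $G$-equivariant coherent sheaf on $S$. Passing to $\D^b(Y/P)$ and then dividing by the unipotent radical of $P$, one is left with a coherent sheaf on an iterated affine bundle over $Z/L$ whose normal weights along the zero section are strictly positive, and $\sigma^\ast(j_\ast M) \simeq (L\sigma^\ast M)\otimes_{\cO_Z} \bigwedge\nolimits^{\bdot}(N^\dual_S X|_Z)$; the grade restriction window $[w,w+\eta]$ together with the strict positivity of these weights forces $L\sigma^\ast M$ to be concentrated in weight exactly $w$. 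A local structure-of-modules computation on the affine bundle then shows that any such $M$ is the $\pi$-pullback of the weight-$w$ sheaf $\sigma^\ast M$, i.e. $M \cong \pi^\ast\sigma^\ast M = \iota_w\big((\sigma^\ast F)_w\big)$. This last point is where the genuine geometry of the stratum enters; everything else is formal. (Alternatively, the lemma can be deduced from the single-stratum form of derived Kirwan surjectivity and the semiorthogonal decomposition $\D^b(X/G) = \langle \ldots,\cA_{w-1},\cA_w,\G_w,\cA_{w+1},\ldots\rangle$ quoted above by identifying the semiorthogonal projection onto $\cA_w$ with $\iota_w\circ(\sigma^\ast(-))_w$, but I would present the direct argument as the main line.)
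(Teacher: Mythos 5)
The paper offers no proof of this lemma: it is quoted verbatim from \cite{HL12}, and the only ingredients the paper itself establishes are the adjoint formulas of Lemma \ref{lem:adjoint_unstable}. Your sketch is, in substance, the argument of the cited reference, and the computations you do carry out are correct: $N_S X|_Z$ is a quotient of $(T_X|_Z)_{<0}$, so $\bigwedge^{\bdot}(N^\dual_S X|_Z)$ sits in weights $[0,\eta]$ with weight-$0$ part $\cO_Z$ and weight-$\eta$ part $\det(N^\dual_S X|_Z)$, which correctly accounts for both formulas for the inverse. Three caveats. First, a one-sided inverse $(\sigma^\ast(-))_w \circ \iota_w \simeq \id$ only gives that $\iota_w$ is split injective on Hom's; to get full faithfulness you should invoke Lemma \ref{lem:adjoint_unstable} and check that the \emph{unit} $\id \to \iota_w^R\iota_w$ (equivalently the counit $\iota_w^L \iota_w \to \id$) is the isomorphism, not merely that source and target are abstractly isomorphic. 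Second, the natural comparison map for essential surjectivity runs $F^\bdot \to \iota_w\bigl((\sigma^\ast F^\bdot)_w\bigr)$ (unit of $\iota_w^L \dashv \iota_w$), or $\iota_w\bigl((\sigma^\ast F^\bdot)_{w+\eta}\otimes \det N_S X\bigr) \to F^\bdot$ (counit of $\iota_w \dashv \iota_w^R$); you have the direction reversed, though the dévissage is unaffected. Third, and most importantly, the step you defer to ``a local structure-of-modules computation'' --- that an equivariant coherent sheaf on $S$ whose derived restriction to $Z$ is concentrated in a single $\lambda$-weight is $\pi^\ast$ of its restriction --- is precisely the baric-decomposition equivalence $\pi^\ast : \D^b(Z/L)_w \to \D^b(S/G)_w$ that the paper quotes from \cite{HL12} in the proof of Lemma \ref{lem:adjoint_unstable}. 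You have correctly located where the genuine content lives, but that step is asserted rather than proved, so the proposal is an accurate outline rather than a complete argument.
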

Using the equivalences $\iota_w$ and $r$ we can rewrite the main semiorthogonal decomposition
\begin{equation} \label{eqn:main_SOD}
\D^b(X/G) = \langle \ldots, \D^b(Z/L)_w, \D^b(X^{ss}/G)_w , \D^b(Z/L)_{w+1}, \ldots \rangle
\end{equation}
When there are multiple strata, one can inductively construct a nested semiorthogonal decomposition using $\D^b(X_{i-1} / G) = \langle \ldots, \cA^i_w, \D^b(X_i/G), \cA^i_{w+1}, \ldots \rangle$.

In this paper, we will consider the full subcategory
$$\cC_w := \{ F^\bdot \in \D^b(X/G) \left| \cH^\ast (\sigma^\ast F^\bdot) \text{ has weights in } [w,w+\eta] \text{ w.r.t. } \lambda \right \} \subset \D^b(X/G)$$
If we instead use the grade restriction window $[w,w+\eta)$, then we get the subcategory $\G_w \subset \cC_w$. The main theorem of \cite{HL12} implies that we have two semiorthogonal decompositions
\begin{equation} \label{eqn:two_semi_decomps}
\cC_w = \langle \G_w, \cA_w \rangle = \langle \cA_w , \G_{w+1} \rangle.
\end{equation}
We regard restriction to $X^{ss}$ as a functor $r : \cC_w \to \D^b(X^{ss}/G)$. The subcategory $\cA_w$ is the kernel of $r$, but is described more explicitly as the essential image of the fully faithful functor $\iota_w : \D^b(Z/L)_w \to \cC_w$ as discussed above.
\begin{lem} \label{lem:adjoint_unstable}
The left and right adjoints of $\iota_w : \D^b(Z/L)_w \to \cC_w$ are $\iota_w^L(F^\bdot) = (\sigma^\ast F^\bdot)_w$ and $\iota_w^R(F^\bdot) = (\sigma^\ast F^\bdot)_{w+\eta} \otimes \det N_S X|_Z$.
\end{lem}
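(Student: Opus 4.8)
The plan is to identify the adjoints of $\iota_w$ with the adjoints of the more symmetric functor $\sigma^\ast$ followed by projection onto a weight space, exploiting the two descriptions of $\iota_w^{-1}$ already recorded in Lemma \ref{lem:unstable_category}. Concretely, I would first observe that $\iota_w$ factors, up to the equivalence $\D^b(Z/L)_w \xrightarrow{\sim} \cA_w$, as the composition of the inclusion $\cA_w \hookrightarrow \cC_w$ with this equivalence; so it suffices to compute the left and right adjoints of the inclusion $j_\ast : \cA_w \hookrightarrow \cC_w$ and then precompose with $\iota_w^{-1}$, for which we have the two formulas $(\sigma^\ast F^\bdot)_w$ and $(\sigma^\ast F^\bdot)_{w+\eta}\otimes \det(N_S X)$ from Lemma \ref{lem:unstable_category}. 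By the semiorthogonal decompositions \eqref{eqn:two_semi_decomps}, $\cA_w$ is left-admissible in $\cC_w = \sod{\G_w,\cA_w}$ and right-admissible in $\cC_w = \sod{\cA_w,\G_{w+1}}$, so both adjoints to $j_\ast$ exist and are given by the projection functors associated to these two decompositions.

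Next I would make these projections explicit. For the right adjoint $j^! : \cC_w \to \cA_w$: given $F^\bdot \in \cC_w$, its filtration with respect to $\cC_w = \sod{\cA_w, \G_{w+1}}$ expresses $F^\bdot$ as a cone whose $\cA_w$-part is $j^! F^\bdot$; composing with $\iota_w^{-1}$ in its first incarnation $(\sigma^\ast(\,\cdot\,))_w$ does not immediately land on the claimed formula, so instead I would compose with the second incarnation. The point is that passing to the $\G_{w+1}$-truncation kills exactly the piece of $\sigma^\ast F^\bdot$ in weights outside $[w+1,w+\eta]$... more precisely, one checks on $\sigma^\ast$ that the projection to $\cA_w$ in the decomposition $\sod{\cA_w,\G_{w+1}}$ corresponds to taking the weight-$(w+\eta)$ summand, because $\G_{w+1}$ is cut out by weights in $[w+1,w+\eta+1)$ and $\cA_w$ by weights in $[w,w+\eta]$, so the overlap forces the complementary piece to sit in weight $w+\eta$. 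Tracking through $\iota_w$ gives $\iota_w^R(F^\bdot) = (\sigma^\ast F^\bdot)_{w+\eta}\otimes \det N_S X|_Z$. Symmetrically, for the left adjoint one uses $\cC_w = \sod{\G_w,\cA_w}$, where $\G_w$ is cut out by weights in $[w,w+\eta)$; the complementary $\cA_w$-piece then lives in weight $w$, and composing with the first incarnation $(\sigma^\ast(\,\cdot\,))_w$ of $\iota_w^{-1}$ yields $\iota_w^L(F^\bdot) = (\sigma^\ast F^\bdot)_w$.

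The cleanest way to carry this out rigorously is to verify the adjunction directly rather than to chase projection functors: for the left adjoint, show $\op{Hom}_{\cC_w}(\iota_w(G^\bdot), F^\bdot) \cong \op{Hom}_{\D^b(Z/L)_w}(G^\bdot, (\sigma^\ast F^\bdot)_w)$ naturally in $G^\bdot$ and $F^\bdot$, using that $\iota_w(G^\bdot)$ is supported on $S$ together with the local structure of $S = G\times_P Y$ and the projection $\pi: Y \to Z$; the base-change/adjunction $(\sigma_\ast, \sigma^\ast)$ for the closed immersion $\sigma: Z/L \to X/G$, combined with the $\lambda$-weight decomposition, isolates the weight-$w$ summand. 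The analogous computation for the right adjoint brings in the relative dualizing sheaf of the KN stratum, which is where the twist by $\det N_S X|_Z$ and the shift of the window endpoint from $w$ to $w+\eta$ enter; this is essentially the same duality already used to prove the two descriptions of $\iota_w^{-1}$ in Lemma \ref{lem:unstable_category}, so I would cite \cite{HL12} for it. The main obstacle is the bookkeeping with the weight windows: one must be careful that $\cC_w$ uses the \emph{closed} window $[w,w+\eta]$ while $\G_w$ and $\G_{w+1}$ use half-open windows, and that the two endpoints $w$ and $w+\eta$ are exactly the two "extra" weights that $\cA_w$ contributes beyond $\G_w$ and $\G_{w+1}$ respectively — getting the off-by-one's right is the crux, but conceptually the statement is forced by \eqref{eqn:two_semi_decomps} together with Lemma \ref{lem:unstable_category}.
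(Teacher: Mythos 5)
Your overall strategy in the last paragraph --- verify the adjunction identities directly, using that $\iota_w = j_\ast \pi^\ast$ factors through the closed immersion $j : S/G \to X/G$ and the projection $\pi$ to $Z/L$, with $\det N_S X|_Z$ entering through $j^!\cO_X$ --- is exactly the paper's. But the execution contains errors you would hit immediately on writing it out. The adjunction identity you propose to prove for the left adjoint, $\op{Hom}_{\cC_w}(\iota_w G^\bdot, F^\bdot) \cong \op{Hom}(G^\bdot, (\sigma^\ast F^\bdot)_w)$, has the wrong variance: an isomorphism of that shape characterizes $(\sigma^\ast F^\bdot)_w$ as the \emph{right} adjoint, and as stated it is false --- computing $\op{Hom}(j_\ast \pi^\ast G^\bdot, F^\bdot)$ via $j^!F^\bdot \simeq j^!\cO_X \otimes j^\ast F^\bdot$ produces the $\det(N_S X)$-twisted weight-$(w+\eta)$ piece, not the weight-$w$ piece. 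The left adjoint is characterized by $\op{Hom}_{\cC_w}(F^\bdot, \iota_w G^\bdot) \cong \op{Hom}((\sigma^\ast F^\bdot)_w, G^\bdot)$, which is what the paper proves using the $(j^\ast, j_\ast)$ adjunction.

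The semiorthogonal bookkeeping in your middle paragraph likewise contains two compensating errors. In $\cC_w = \sod{\cA_w, \G_{w+1}}$ the $\cA_w$-component of the filtration is the quotient piece, hence computed by the \emph{left} adjoint $\iota_w^L$; the right adjoint is the subobject projection coming from $\cC_w = \sod{\G_w, \cA_w}$. And the complementary weights are reversed: the complement of the window $[w+1, w+1+\eta) = \{w+1,\ldots,w+\eta\}$ of $\G_{w+1}$ inside $[w,w+\eta]$ is $\{w\}$ (consistent with $\iota_w^L = (\sigma^\ast \cdot)_w$), while the complement of the window of $\G_w$ is $\{w+\eta\}$. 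Your two swaps cancel, so the final formulas come out right, but neither intermediate claim is correct as stated. Finally, ``combined with the $\lambda$-weight decomposition, isolates the weight-$w$ summand'' glosses over the one nontrivial input: $S$ is not fixed by $\lambda$, so $j^\ast F^\bdot$ has no literal weight decomposition. The paper uses the baric decomposition of $\D^b(S/G)$ from \cite{HL12} and its truncation functor $\beta_{<w+1}$, together with the equivalence $\pi^\ast : \D^b(Z/L)_w \to \D^b(S/G)_w$ with inverse $\sigma^\ast$, to justify $\sigma^\ast \beta_{<w+1} j^\ast F^\bdot = (\sigma^\ast F^\bdot)_w$; that is the step you need to cite, not merely the closed-immersion adjunction.
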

\begin{proof}
Letting $G^\bdot \in \D^b(Z/L)_w$, we have $\op{Hom}_{X/G}(F^\bdot, \iota_w G^\bdot) \simeq \op{Hom}_{S/G}(j^\ast F^\bdot, \pi^\ast F^\bdot)$ and $\pi^\ast G^\bdot \in \D^b(S/G)_w$. In \cite{HL12}, we show that $\D^b(S/G)$ admits a baric decomposition, and using the baric truncation functors
\begin{align*}
\op{Hom}_{S/G}(j^\ast F^\bdot, \pi^\ast G^\bdot) &\simeq \op{Hom}_{S/G}(\ladj{w+1}j^\ast F^\bdot, \pi^\ast G^\bdot) \\
&\simeq \op{Hom}_{Z/L}(\sigma^\ast \ladj{w+1} j^\ast F^\bdot, G^\bdot)
\end{align*}
Where the last equality uses the fact that $\pi^\ast : \D^b(Z/L)_w \to \D^b(S/G)_w$ is an equivalence with inverse $\sigma^\ast$. Finally, we have $\sigma^\ast \ladj{w+1} j^\ast F^\bdot = (\sigma^\ast j^\ast F^\bdot)_w = (\sigma^\ast F^\bdot)_w$.

The argument for $\iota^R$ is analogous, but it starts with the adjunction for $j^! F^\bdot \simeq j^!(\cO_X) \otimes j^\ast F^\bdot$, $\op{Hom}_{X/G}(\iota_w G^\bdot, F^\bdot) \simeq \op{Hom}_{S/G}(\pi^\ast G^\bdot, \det (N_{S} X) \otimes j^\ast F^\bdot)$.
\end{proof}

\begin{lem} \label{lem:adjoints_restriction}
The functor $r : \cC_w \to \D^b(X^{ss}/G)$ has right and left adjoints given respectively by $r^R : \D^b(X^{ss}/G) \simeq \G_w \subset \cC_w$ and $r^L: \D^b(X^{ss}/G) \simeq \G_{w+1} \subset \cC_{w+1}$.
\end{lem}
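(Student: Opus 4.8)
The plan is to deduce Lemma~\ref{lem:adjoints_restriction} directly from the two semiorthogonal decompositions in \eqref{eqn:two_semi_decomps} together with the fact that $\cA_w = \ker r$. Recall the general principle: if $\cC = \sod{\cB, \cA}$ is a semiorthogonal decomposition with $\cA$ admissible, then the quotient functor $\cC \to \cC/\cA \simeq \cB$ has a right adjoint given by the inclusion of $\cB$, since for $F \in \cC$ the projection onto $\cB$ in the filtration triangle $a \to F \to b$ with $a \in \cA$, $b \in \cB$ realizes $b$ as the universal arrow from an object of $\cA^\perp$; dually, if $\cC = \sod{\cA, \cB'}$ then the inclusion of $\cB'$ is the left adjoint of the quotient. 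So the first step is to identify $r$ with (a twist of) the Verdier quotient functor $\cC_w \to \cC_w/\cA_w$. This is essentially the content of derived Kirwan surjectivity: $\cA_w$ is the kernel of $r$, and $r$ restricted to either complementary factor $\G_w$ or $\G_{w+1}$ is an equivalence onto $\D^b(X^{ss}/G)$ — the latter because $\G_w \subset \cC_w$ maps isomorphically under $r$ by Theorem~\ref{thm:derived_Kirwan_surjectivity} applied to the single stratum $S$, and likewise $\G_{w+1} \subset \cC_{w+1}$.

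Granting that, the argument is short. From $\cC_w = \sod{\G_w, \cA_w}$: for any $F \in \cC_w$ the filtration triangle has the form $a \to F \to g$ with $a \in \cA_w$ and $g \in \G_w$; since $\cA_w = \ker r$ we get $r(F) \cong r(g)$, and for $E \in \D^b(X^{ss}/G)$ with $r^R(E)$ denoting the corresponding object of $\G_w$, the computation $\op{Hom}_{\cC_w}(F, r^R(E)) \cong \op{Hom}_{\cC_w}(g, r^R(E)) \cong \op{Hom}_{\D^b(X^{ss}/G)}(r(g), E) \cong \op{Hom}_{\D^b(X^{ss}/G)}(r(F), E)$ exhibits $r^R$ as right adjoint to $r$. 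Here the middle isomorphism is because $r|_{\G_w}$ is an equivalence, and the first uses semiorthogonality $\op{Hom}(a, \G_w) $ placement — one must be careful which side the $\op{Hom}$'s point; with the convention in \eqref{eqn:main_SOD} that there are no $R\op{Hom}$'s pointing left and $\G_w$ sits to the \emph{left} of $\cA_w$ in $\cC_w = \sod{\G_w, \cA_w}$, we have $\op{Hom}(\cA_w, \G_w) = 0$, which is exactly what is needed to kill the contribution of $a$. Symmetrically, from $\cC_{w+1} = \sod{\cA_{w+1}, \G_{w+1}}$ (note the shift: this is the decomposition $\cC_w = \sod{\cA_w, \G_{w+1}}$ with $w$ replaced by $w$, i.e.\ the second decomposition in \eqref{eqn:two_semi_decomps}), the inclusion $\G_{w+1} \hookrightarrow \cC_{w+1}$ followed by the equivalence $r|_{\G_{w+1}}$ inverted gives the left adjoint: for $F \in \cC_{w+1}$ with filtration $g' \to F \to a'$, $g' \in \G_{w+1}$, $a' \in \cA_{w+1}$, one gets $\op{Hom}_{\D^b(X^{ss}/G)}(E, r(F)) \cong \op{Hom}(r^L(E), F)$ by the same bookkeeping. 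Strictly, one should observe that the domain subtlety — $r^L$ lands in $\cC_{w+1}$ while $r$ is a functor on $\cC_w$ — is harmless because all three of $\cC_w$, $\cC_{w+1}$, $\D^b(X^{ss}/G)$ are being compared through the common target $\D^b(X^{ss}/G)$ via $r$, and the adjunctions are adjunctions between $r : \cC_\bullet \to \D^b(X^{ss}/G)$ and functors back.

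I expect the only real subtlety to be orientational: keeping straight which factor of each semiorthogonal decomposition is ``left'' and which is ``right,'' and hence which adjoint (left vs.\ right) the inclusion of $\G_w$ versus $\G_{w+1}$ computes. The decompositions $\cC_w = \sod{\G_w, \cA_w} = \sod{\cA_w, \G_{w+1}}$ show $\G_w$ is right-orthogonal-complement to $\cA_w$ and $\G_{w+1}$ is left-orthogonal-complement to $\cA_w$; the standard dictionary then forces $r^R$ to factor through $\G_w$ and $r^L$ through $\G_{w+1}$, matching the claim. No hard geometry is needed beyond Theorem~\ref{thm:derived_Kirwan_surjectivity} and \eqref{eqn:two_semi_decomps}; the proof is a formal consequence of having a semiorthogonal decomposition whose kernel-factor is $\ker r$ and whose complementary factor maps isomorphically to the quotient. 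One could alternatively phrase the whole thing as: $r$ is the composition of the quotient $\cC_w \to \cC_w/\cA_w$ with the inverse of the equivalence $\G_w \xrightarrow{\sim} \cC_w/\cA_w$, and quotient functors by an admissible subcategory have both adjoints given by the two complementary inclusions.
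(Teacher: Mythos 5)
Your argument is correct and is exactly the formal deduction the paper has in mind: the lemma is stated there without proof, as an immediate consequence of the two semiorthogonal decompositions \eqref{eqn:two_semi_decomps} together with the facts that $\cA_w = \ker r$ and that $r$ restricted to $\G_w$ (resp.\ $\G_{w+1}$) is an equivalence. The only blemish is the indexing in the left-adjoint half: the relevant decomposition is $\cC_w = \sod{\cA_w, \G_{w+1}}$, so the filtration there should read $g' \to F \to a'$ with $F \in \cC_w$ and $a' \in \cA_w$ (not $\cA_{w+1}$), as your own parenthetical correctly identifies despite the slip in the displayed indices.
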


Now because we have two semiorthogonal decompositions in Equation \eqref{eqn:two_semi_decomps}, there is a left mutation \cite{Bo89} equivalence functor $\lmut_{\cA_w} : \G_{w+1} \to \G_w$ defined by the functorial exact triangle
\begin{equation}\label{eqn:define_l_mutate}
\iota_w \iota_w^R (F^\bdot) \to F^\bdot \to \lmut_{\cA_w} F^\bdot \parr
\end{equation}
Note that restricting to $X^{ss}/G$, this triangle gives an equivalence $r (F^\bdot) \simeq r (\lmut_{\cA_w} F^\bdot)$. Thus this mutation implements the 'window shift' functor
\begin{equation}\label{eqn:mutation_is_window_shift}
\xymatrix{ \G_{w+1} \ar[dr]_r \ar[rr]^{\lmut_{\cA_w}} & & \G_w \\ & \D^b(X^{ss}/G) \ar[ur]_{r^{-1} = r^R} }
\end{equation}
meaning that $\lmut_{\cA_w} F^\bdot$ is the unique object of $\G_w$ restricting to the same object as $F^\bdot$ in $\D^b(X^{ss}/G)$.


\subsection{The category $\D^b(Z/L)_w$} \label{sect:describe_category_w}

We will provide a more geometric description of the subcategory $\D^b(Z/L)_w$. We define the quotient group $L^\prime = L / \lambda(\bC^\ast)$. Because $\lambda(\bC^\ast)$ acts trivially on $Z$, the group $L^\prime$ acts naturally on $Z$ as well.

\begin{lem}
The pullback functor gives an equivalence $\D^b(Z/L^\prime) \xrightarrow{\simeq} \D^b(Z/L)_0$.
\end{lem}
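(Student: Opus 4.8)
The plan is to verify that the pullback along the projection $q : Z/L \to Z/L'$ is well-defined on the subcategory $\D^b(Z/L)_0$, fully faithful, and essentially surjective. First I would observe that $\lambda(\bC^\ast)$ is central in $L$ (it is contained in the centralizer $L$ of $\lambda$), so $\lambda(\bC^\ast)$ is a normal subgroup acting trivially on $Z$, and hence the quotient $L' = L/\lambda(\bC^\ast)$ acts on $Z$ with $Z/L = (Z/L') \times B\lambda(\bC^\ast)$ — more precisely, $Z/L$ is a $\lambda(\bC^\ast)$-gerbe over $Z/L'$, banded by the trivial band since the extension $1 \to \lambda(\bC^\ast) \to L \to L' \to 1$ restricted to the stabilizers splits (the $\bC^\ast$ is a direct factor after choosing a complement, or at worst one works with the trivial gerbe structure coming from the global splitting on $Z$). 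The key structural point is that $\QCoh(Z/L)$ decomposes as a direct sum $\bigoplus_{n \in \bZ} \QCoh(Z/L)_n$ indexed by the $\lambda(\bC^\ast)$-weight, since representations of $L$ decompose according to the central character of $\lambda(\bC^\ast)$, and this weight decomposition is exact and compatible with tensor products and all the relevant functors.

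Next I would identify $\QCoh(Z/L)_0$ — the weight-zero summand — with $\QCoh(Z/L')$. Indeed, an $L$-equivariant sheaf on which $\lambda(\bC^\ast)$ acts with weight $0$ is precisely a sheaf with an action of $L$ that factors through $L' = L/\lambda(\bC^\ast)$, which is exactly the data of an $L'$-equivariant sheaf. Pullback $q^\ast$ is the functor realizing this identification: it lands in weight $0$ because $\lambda(\bC^\ast)$ acts trivially on the pullback, and on the abelian categories of (quasi)coherent sheaves it is an equivalence onto the weight-zero part with inverse given by "take $\lambda(\bC^\ast)$-invariants" (equivalently, the weight-zero summand functor $(\bullet)_0$). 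Since $q$ is flat (it is a gerbe, hence smooth), $q^\ast$ is exact, so it passes to an equivalence on bounded derived categories $\D^b(Z/L') \xrightarrow{\simeq} \D^b(Z/L)_0$, where the right-hand side is understood as the full subcategory of complexes whose cohomology sheaves lie in $\QCoh(Z/L)_0$ (equivalently, have $\lambda$-weight $0$), matching the definition of $\D^b(Z/L)_w$ for $w = 0$ given in the excerpt.

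For full faithfulness and essential surjectivity at the derived level, the cleanest route is: exactness of $q^\ast$ plus the fact that $(\bullet)_0 \circ q^\ast \simeq \id$ and $q^\ast \circ (\bullet)_0 \simeq \id$ on the weight-zero subcategory already on abelian categories (these are elementary checks on $L$-representations), and both functors are exact, so they induce inverse equivalences on $\D^b$. Alternatively, one can compute $\op{Hom}$-spaces directly: $\op{Hom}_{Z/L}(q^\ast E, q^\ast F) = \op{Hom}_{Z/L'}(E, Rq_\ast q^\ast F)$ and use that $Rq_\ast q^\ast F \simeq F$ because $q$ is a trivial gerbe (so $Rq_\ast \cO_{Z/L} = \cO_{Z/L'}$, the higher direct images of the structure sheaf vanishing since taking $\lambda(\bC^\ast)$-invariants is exact in characteristic $0$), combined with the projection formula.

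I do not expect a serious obstacle here; the statement is essentially bookkeeping about central tori acting trivially. The one point requiring a little care is making sure the gerbe $Z/L \to Z/L'$ is handled correctly — in general $Z/L$ is only a $\lambda(\bC^\ast)$-gerbe over $Z/L'$, not a trivial product — but because $\lambda(\bC^\ast)$ is a central torus, $\QCoh(Z/L)$ still splits into weight eigenspaces and the weight-zero block is $\QCoh(Z/L')$ regardless of whether the gerbe is trivial; equivalently, one uses that $H^i(B\lambda(\bC^\ast), k) = k$ for $i = 0$ and vanishes otherwise in characteristic $0$, so $Rq_\ast$ of a weight-zero sheaf is just that sheaf. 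Once this is set up, the equivalence of bounded derived categories follows formally from exactness of $q^\ast$ and the unit/counit being isomorphisms on the relevant subcategories.
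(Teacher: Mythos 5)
Your proof is correct and follows essentially the same route as the paper, which simply reduces to the corresponding statement for abelian categories of quasicoherent sheaves and invokes descent along the gerbe $Z/L \to Z/L'$; your weight-decomposition argument via the central character of $\lambda(\bC^\ast)$ is just that descent argument made explicit, and the passage to $\D^b$ via exactness is the intended bookkeeping.
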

\begin{proof} This follows from the analogous statement for quasicoherent sheaves, which is a consequence of descent.\end{proof}

The categories $\D^b(Z/L)_w$ can also be related to $\D^b(Z/L^\prime)$. If $\lambda  : \bC^\ast \to G$ has the kernel $\mu_n \subset \bC^\ast$, then $\D^b(Z/L)_w = \emptyset$ unless $w \equiv 0 \mod n$. In this case we replace $\lambda$ with an injective $\lambda^\prime$ such that $\lambda = (\lambda^\prime)^n$ and $\D^b(Z/L)_{[\lambda^\prime = w]} = \D^b(Z/L)_{[\lambda = nw]}$. Thus we will assume that $\lambda$ is injective.

\begin{lem} \label{lem:trivialize_gerbe}
Let $\cL \in \D^b(Z/L)_w$ be an invertible sheaf. Then pullback followed by $\cL \otimes \bullet$ gives an equivalence $\D^b(Z/L^\prime) \xrightarrow{\simeq} \D^b(Z/L)_w$.
\end{lem}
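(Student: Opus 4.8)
The plan is to reduce the statement to the previous lemma, which already gives an equivalence $\D^b(Z/L^\prime) \xrightarrow{\simeq} \D^b(Z/L)_0$, by exhibiting an equivalence $\D^b(Z/L)_0 \xrightarrow{\simeq} \D^b(Z/L)_w$ implemented by $\cL \otimes (\bullet)$. So first I would observe that the stack $Z/L$ carries the $\lambda(\bC^\ast)$-weight grading on every coherent sheaf (since $\lambda(\bC^\ast)$ is central in $L$ and acts trivially on $Z$, this central $\bC^\ast$ induces a $\bZ$-grading, equivalently a decomposition $\D^b(Z/L) = \bigoplus_{w} \D^b(Z/L)_w$ as was used implicitly throughout Section \ref{sect:background}). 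The functor $\cL \otimes \bullet$ is an exact autoequivalence of $\D^b(Z/L)$ with inverse $\cL^{\dual} \otimes \bullet$, so it suffices to check it carries the weight-$0$ summand to the weight-$w$ summand; since $\cL$ itself has pure $\lambda$-weight $w$ by hypothesis, tensoring shifts $\lambda$-weights by $w$, hence $\cL \otimes \D^b(Z/L)_0 \subseteq \D^b(Z/L)_w$ and $\cL^\dual \otimes \D^b(Z/L)_w \subseteq \D^b(Z/L)_0$, giving the desired equivalence $\D^b(Z/L)_0 \xrightarrow{\simeq} \D^b(Z/L)_w$.

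Composing with the equivalence from the preceding lemma then yields the claim: pullback along $Z/L \to Z/L^\prime$ gives $\D^b(Z/L^\prime) \xrightarrow{\simeq} \D^b(Z/L)_0$, and then $\cL \otimes \bullet$ gives $\D^b(Z/L)_0 \xrightarrow{\simeq} \D^b(Z/L)_w$, so the composite is the asserted equivalence. The one point that needs a word of justification is why such an invertible $\cL$ of pure weight $w$ exists at all — but this is exactly the hypothesis of the lemma (``Let $\cL \in \D^b(Z/L)_w$ be an invertible sheaf''), so we simply take the one provided; existence is not part of what must be proved here. (In the cases of interest $\cL$ is typically a power of a character twist or of $\det N_S X|_Z$, which explains why the hypothesis is harmless in practice.)

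The main — and really only — subtlety is making sure the weight-grading bookkeeping is correct: that tensoring a weight-$0$ complex by a weight-$w$ line bundle lands in weight $w$, and that this operation is compatible with the descent equivalence from the previous lemma so that the composite is genuinely an equivalence onto all of $\D^b(Z/L)_w$ rather than a proper subcategory. Both are immediate once one notes that the $\lambda(\bC^\ast)$-weight decomposition is a decomposition of $\D^b(Z/L)$ as a module category over $\D^b(B\bC^\ast)$-graded pieces and that $\cL$ is an invertible object of the weight-$w$ graded piece, so $\cL \otimes \bullet$ is an equivalence of the ambient category restricting to an equivalence between graded pieces shifted by $w$. I would present the argument in two or three sentences, since everything reduces formally to the preceding lemma together with the observation that tensoring by an invertible sheaf of pure weight $w$ is an equivalence shifting weights by $w$.

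\begin{proof}
Since $\lambda(\bC^\ast)$ is central in $L$ and acts trivially on $Z$, every object of $\D^b(Z/L)$ decomposes canonically according to $\lambda$-weight, giving $\D^b(Z/L) = \bigoplus_w \D^b(Z/L)_w$. Tensoring by the invertible sheaf $\cL$ of pure $\lambda$-weight $w$ is an exact autoequivalence of $\D^b(Z/L)$, with inverse $\cL^\dual \otimes \bullet$, and it shifts $\lambda$-weights by $w$; hence it restricts to an equivalence $\D^b(Z/L)_0 \xrightarrow{\simeq} \D^b(Z/L)_w$. Composing with the equivalence $\D^b(Z/L^\prime) \xrightarrow{\simeq} \D^b(Z/L)_0$ given by pullback along $Z/L \to Z/L^\prime$ from the previous lemma yields the desired equivalence $\D^b(Z/L^\prime) \xrightarrow{\simeq} \D^b(Z/L)_w$.
\end{proof}
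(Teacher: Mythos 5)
Your proof is correct and is exactly the argument the paper intends — indeed the paper omits the proof of this lemma entirely, treating it as an immediate consequence of the preceding descent lemma together with the observation that tensoring by an invertible sheaf of pure $\lambda$-weight $w$ shifts the weight decomposition by $w$. Your bookkeeping (centrality of $\lambda(\bC^\ast)$ in $L$, the weight-summand decomposition of $\D^b(Z/L)$, and invertibility of $\cL\otimes\bullet$) is accurate and fills in precisely what the authors left implicit.
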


For instance, if there is a character $\chi : L \to \bC^\ast$ such that $\chi \circ \lambda$ is the identity on $\bC^\ast$, then $\chi$ induces an invertible sheaf on $Z/L$ with weight $1$, so Lemma \ref{lem:trivialize_gerbe} applies. If $G$ is abelian then such a character always exists.

\begin{rem}
This criterion is not always met, for example when $Z/L = \pt / GL_n$ and $\lambda$ is the central $\bC^\ast$. What is true in general is that $Z/L \to Z/L^\prime$ is a $\bC^\ast$ gerbe, and the category $\D^b(Z/L)_1$ is by definition the derived category of coherent sheaves on $Z/L^\prime$ twisted by that gerbe. The data of an invertible sheaf $\cL \in \D^b(Z/L)_1$ is equivalent to a trivialization of this gerbe $Z/L \xrightarrow{\simeq} Z/L^\prime \times \pt / \bC^\ast$.
\end{rem}


\section{Window shift autoequivalences, mutations, and spherical functors}
\label{sect:mutations}

In this paper we study balanced GIT wall crossings. Let $L_0$ be a $G$-ample line bundle such that the strictly semistable locus $X^{sss} = X^{ss} - X^s$ is nonempty, and let $L^\prime$ be another $G$-equivariant line bundle. We assume that $X^{ss}=X^s$ for the linearizations $L_\pm = L_0 \pm \epsilon L^\prime$ for sufficiently small $\epsilon$, and we denote $X^{ss}_\pm = X^{ss}(L_\pm)$. In this case, $X^{ss}(L_0) - X^{ss}(L_\pm)$ is a union of KN strata for the linearization $L_\pm$, and we will say that the wall crossing is \emph{balanced} if the strata $S^+_i$ and $S^-_i$ lying in $X^{ss}(L_0)$ are indexed by the same set, with $Z^+_i = Z^-_i$ and $\lambda_i^+ = (\lambda_i^-)^{-1}$. This is slightly more general than the notion of a truly faithful wall crossing in \cite{DH}. In particular, if $G$ is abelian and there is some linearization with a stable point, then all codimension one wall crossings are balanced.

In this case we will replace $X$ with $X^{ss}(L_0)$ so that these are the only strata we need to consider. In fact we will mostly consider a balanced wall crossing where only a single stratum flips -- the analysis for multiple strata is analogous. We will drop the superscript from $Z^\pm$, but retain superscripts for the distinct subcategories $\cA^\pm_w$. Objects in $\cA^\pm_w$ are supported on $S^\pm$, which are distinct because $S^+$ consist of orbits of points flowing to $Z$ under $\lambda^+$, whereas $S^-$ consists of orbits of points flowing to $Z$ under $\lambda^-$. When there is ambiguity as to which $\lambda^\pm$ we are referring to, we will include it in the notation, i.e. $\D^b(Z/L)_{[\lambda^+ = w]}$.

\begin{observation} \label{observation:wall_crossing}
If $\omega_X|_Z$ has weight $0$ with respect to $\lambda^\pm$, then $\eta^+ = \eta^-$ (see \cite{HL12}). This implies that $\cC^+_w = \cC^-_{w^\prime}$, $\G^+_w = \G^-_{w^\prime+1}$, and $\G^+_{w+1} = \G^-_{w^\prime}$, where $w^\prime = -\eta - w$.
\end{observation}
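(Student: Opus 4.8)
\emph{Proof proposal.} The plan is to reduce the three claimed identities to elementary bookkeeping with $\lambda^+$-weights; the only substantive input is $\eta^+=\eta^-$, which is where the hypothesis on $\omega_X$ enters and which is recorded in \cite{HL12}. Here is the weight dictionary I would set up first. Since $Z^+=Z^-=:Z$, there is a single inclusion $\sigma\colon Z\hookrightarrow X$, so for $F^\bdot\in\D^b(X/G)$ the object $\sigma^\ast F^\bdot\in\D^b(Z/L)$ is independent of the chamber, and because $\lambda^-=(\lambda^+)^{-1}$ its $\lambda^-$-weight decomposition is obtained from its $\lambda^+$-weight decomposition by negating every weight. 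Hence, for integers $a$ and $\eta>0$: $\cH^\ast(\sigma^\ast F^\bdot)$ has $\lambda^-$-weights in the closed window $[a,a+\eta]$ iff it has $\lambda^+$-weights in $[-a-\eta,\,-a]$; and it has $\lambda^-$-weights in the half-open window $[a,a+\eta)=\{a,\dots,a+\eta-1\}$ iff it has $\lambda^+$-weights in $\{-a-\eta+1,\dots,-a\}$, which for $\eta\in\bZ_{>0}$ is again the half-open window $[-a-\eta+1,\,-a+1)$.

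\emph{For $\eta^+=\eta^-$} (which I would cite from \cite{HL12}, but the argument is short): decompose $T_X|_Z$ into $\lambda^+$-weight spaces $(T_X|_Z)_j$, so the weight-$0$ piece is $T_Z$ and the weight-$0$ piece of $\mathfrak g$ is $\mathfrak l=\mathrm{Lie}(L)$. From $S^\pm=G\times_{P^\pm}Y^\pm$ one gets $N_{S^+}X|_Z\cong (T_X|_Z)^{\lambda^+<0}/\mathfrak g^{\lambda^+<0}$ and, running the same computation with $\lambda^-$, $N_{S^-}X|_Z\cong (T_X|_Z)^{\lambda^+>0}/\mathfrak g^{\lambda^+>0}$. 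Since $\eta^\pm$ is the $\lambda^\pm$-weight of $\det N^\dual_{S^\pm}X|_Z$ and $\mathrm{wt}_{\lambda^-}=-\mathrm{wt}_{\lambda^+}$, taking determinants gives
\[
\eta^+-\eta^- \;=\; -\Big(\textstyle\sum_j j\,\dim(T_X|_Z)_j\Big)\;+\;\Big(\textstyle\sum_j j\,\dim\mathfrak g_j\Big).
\]
The first sum equals $-\mathrm{wt}_{\lambda^+}(\omega_X|_Z)=0$ by hypothesis; the second vanishes because $\dim\mathfrak g_j=\dim\mathfrak g_{-j}$ (the adjoint representation of $\lambda^+(\bC^\ast)$ on $\mathfrak g$ is self-dual). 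So $\eta^+=\eta^-=:\eta$.

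\emph{The three identities} then fall out of the dictionary with $w'=-\eta-w$. The closed window with $a=w'$ has $-a-\eta=w$ and $-a=w+\eta$, so $\cC^-_{w'}$ is the subcategory cut out by the $\lambda^+$-window $[w,w+\eta]$, i.e.\ $\cC^-_{w'}=\cC^+_w$. The half-open window with $a=w'+1$ has $\{-a-\eta+1,\dots,-a\}=\{w,\dots,w+\eta-1\}$, so $\G^-_{w'+1}=\G^+_w$. The half-open window with $a=w'$ has $\{-a-\eta+1,\dots,-a\}=\{w+1,\dots,w+\eta\}$, so $\G^-_{w'}=\G^+_{w+1}$. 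I expect the only delicate point — the one place an off-by-one could slip in — to be the half-open case: negating $[a,a+\eta)$ yields $(-a-\eta,\,-a]$, which has to be re-expressed as $[-a-\eta+1,\,-a+1)$, and it is precisely this unit shift that makes $\G^+_w$ correspond to $\G^-_{w'+1}$ rather than to $\G^-_{w'}$. As a consistency check I would confirm that both identifications respect the two semiorthogonal decompositions $\cC^+_w=\sod{\G^+_w,\cA^+_w}=\sod{\cA^+_w,\G^+_{w+1}}$ and $\cC^-_{w'}=\sod{\G^-_{w'},\cA^-_{w'}}=\sod{\cA^-_{w'},\G^-_{w'+1}}$ of \eqref{eqn:two_semi_decomps} — bearing in mind that $\cA^+_w$ and $\cA^-_{w'}$ are still distinct subcategories of the common category $\cC^+_w=\cC^-_{w'}$ (supported on $S^+$ and $S^-$ respectively), which is exactly what makes the induced window shift nontrivial.
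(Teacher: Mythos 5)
Your proposal is correct and matches the paper's (implicit) argument: the paper states the identity $\eta^+=\eta^-$ as a consequence of the hypothesis on $\omega_X|_Z$ by citing \cite{HL12}, and the three category identifications are exactly the weight-negation bookkeeping you carry out, including the unit shift in the half-open windows. Your sketch of $\eta^+=\eta^-$ via $N_{S^\pm}X|_Z$ and the self-duality of the adjoint representation is the standard computation from \cite{HL12}, so nothing further is needed.
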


This observation, combined with derived Kirwan surjectivity, implies that the restriction functors $r_\pm : \G_w^- \to \D^b(X^{ss}_\pm / G)$ are both equivalences. In particular $\psi_w := r_+ r_-^{-1} : \D^b(X^{ss}_-/G) \to \D^b(X^{ss}_+/G)$ is a derived equivalence between the two GIT quotients. Due to the dependence on the choice of $w$, we can define the \emph{window shift autoequivalence} $\Phi_w := \psi_{w+1}^{-1} \psi_w$ of $\D^b(X^{ss}_-/G)$.

\begin{lem}
If there is an invertible sheaf $\cL \in \D^b(X/G)$ such that $\cL|_Z$ has weight $w$ w.r.t. $\lambda^+$, then $\Phi_w = (\cL^\dual \otimes) \Phi_0 (\cL \otimes)$. In particular, if $\cL$ has weight $1$, then for any $v,w$, $\psi_v^{-1} \psi_w$ lies in the subgroup of $\op{Aut} \D^b(X^{ss}_- / G)$ generated by $\Phi_0$ and $\cL \otimes$.
\end{lem}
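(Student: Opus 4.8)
The plan is to unwind the definitions and trace through the relevant semiorthogonal decompositions. First I would handle the second claim assuming the first: if $\cL$ has weight $1$, then by the first claim $\Phi_w = (\cL^{\dual\otimes w}) \Phi_0 (\cL^{\otimes w})$ — more precisely, iterating the weight-$1$ case — so each $\Phi_w$ lies in the subgroup generated by $\Phi_0$ and $\cL\otimes$. Since $\psi_v^{-1}\psi_w = (\psi_w^{-1}\psi_{w-1})^{-1}\cdots$ telescopes into a product of the $\Phi_u$'s (for $u$ between $v$ and $w$), it too lies in that subgroup. So the real content is the first sentence.

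For the first claim, the key observation is that tensoring by an invertible sheaf $\cL\in \D^b(X/G)$ with $\cL|_Z$ of weight $w$ w.r.t.\ $\lambda^+$ shifts grade restriction windows. Concretely, for $F^\bdot \in \D^b(X/G)$ we have $\sigma^\ast(\cL\otimes F^\bdot) = (\cL|_Z)\otimes \sigma^\ast F^\bdot$, and since $\cL|_Z$ is an invertible sheaf of pure $\lambda^+$-weight $w$, the weights of $\sigma^\ast(\cL\otimes F^\bdot)$ are those of $\sigma^\ast F^\bdot$ shifted by $w$. Hence $\cL\otimes(\bullet)$ restricts to an equivalence $\G^+_0 \xrightarrow{\simeq} \G^+_w$, and likewise $\G^-_{0} \xrightarrow{\simeq} \G^-_{-w}$ (using $\lambda^- = (\lambda^+)^{-1}$, so the weight becomes $-w$). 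Next I would check that this tensor equivalence is compatible with restriction to the semistable loci: since restriction $r_\pm$ is just pullback along an open immersion, $\cL\otimes(\bullet)$ commutes with $r_\pm$ up to the obvious identification, i.e.\ the square relating $\G^\pm_0 \to \D^b(X^{ss}_\pm/G)$ and $\G^\pm_w \to \D^b(X^{ss}_\pm/G)$ commutes, where the bottom arrow is $\cL|_{X^{ss}_\pm}\otimes(\bullet)$.

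Putting these together: $\psi_w = r_+ r_-^{-1}$ where both $r_\pm$ are the restriction equivalences out of $\G^-_w = \G^+_{w'}$ (Observation \ref{observation:wall_crossing}); the tensor equivalence intertwines the $w=0$ version of this diagram with the general-$w$ version, giving $\psi_w \circ (\cL|_{X^{ss}_-}\otimes) = (\cL|_{X^{ss}_+}\otimes)\circ \psi_0$ as functors $\D^b(X^{ss}_-/G)\to \D^b(X^{ss}_+/G)$, i.e.\ $\psi_w = (\cL|_{X^{ss}_+}\otimes)\,\psi_0\,(\cL^\dual|_{X^{ss}_-}\otimes)$. Then
\begin{align*}
\Phi_w &= \psi_{w+1}^{-1}\psi_w \\
&= \big[(\cL|_{X^{ss}_-}\otimes)\,\psi_0^{-1}\,(\cL^\dual|_{X^{ss}_+}\otimes)\big]\big[(\cL|_{X^{ss}_+}\otimes)\,\psi_0\,(\cL^\dual|_{X^{ss}_-}\otimes)\big] \\
&= (\cL|_{X^{ss}_-}\otimes)\,\Phi_0\,(\cL^\dual|_{X^{ss}_-}\otimes),
\end{align*}
which is the asserted formula $\Phi_w = (\cL^\dual\otimes)\Phi_0(\cL\otimes)$ on $\D^b(X^{ss}_-/G)$. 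The main obstacle — really the only subtle point — is bookkeeping the weights correctly across the balanced wall crossing: one must be careful that a sheaf of weight $w$ for $\lambda^+$ has weight $-w$ for $\lambda^- = (\lambda^+)^{-1}$, and that the window shift $\G^+_w \leftrightarrow \G^-_{w'}$ from Observation \ref{observation:wall_crossing} is applied consistently so that the two instances of tensoring by $\cL$ on the $+$ side cancel. Everything else is a formal diagram chase using that $r_\pm$ are equivalences and that tensoring by an invertible sheaf is an exact autoequivalence commuting with open restriction.
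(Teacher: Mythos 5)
Your overall strategy --- tensoring by $\cL$ shifts the grade restriction windows, commutes with restriction to the semistable loci, and therefore intertwines the wall-crossing equivalences for different window indices --- is exactly the paper's proof (which records this as the commutativity of a single square relating the windows $\G^-_{k+w}$ and $\G^-_k$). However, your final computation does not go through as written. You substitute a conjugate of $\psi_0$ for \emph{both} $\psi_w$ and $\psi_{w+1}$; taken literally this says $\psi_{w+1}=\psi_w$, and your displayed product collapses to $(\cL\otimes)\,\psi_0^{-1}\psi_0\,(\cL^\dual\otimes)=\id$, not to a conjugate of $\Phi_0=\psi_1^{-1}\psi_0$. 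Since $\cL\otimes$ shifts every window index by the same amount $w$, it relates $\psi_{k+w}$ to $\psi_k$ for each $k$; to compute $\Phi_w=\psi_{w+1}^{-1}\psi_w$ you must invoke this relation for the two adjacent indices $k=0$ and $k=1$, so that the middle terms combine to $\psi_1^{-1}\psi_0=\Phi_0$. This is precisely what the paper does: it proves the single identity $(\cL^\dual\otimes)\,\psi_k\,(\cL\otimes)=\psi_{k+w}$ for all $k$ and applies it at $k=0$ and $k=1$.

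There is also a sign inconsistency. You correctly observe that $\cL\otimes$ carries $\G^-_0$ to $\G^-_{-w}$ (since $\cL|_Z$ has weight $-w$ for $\lambda^-$), but then assert the intertwining in the form $\psi_w=(\cL\otimes)\,\psi_0\,(\cL^\dual\otimes)$. With the paper's convention that $\psi_k$ is the equivalence passing through $\G^-_k$, the functor $\cL\otimes$ maps $\G^-_{k+w}$ to $\G^-_k$, which yields $\psi_{k+w}=(\cL^\dual\otimes)\,\psi_k\,(\cL\otimes)$ --- conjugation in the opposite direction from what you wrote. Consequently the expression you arrive at, $(\cL\otimes)\,\Phi_0\,(\cL^\dual\otimes)$, is not literally the asserted $(\cL^\dual\otimes)\,\Phi_0\,(\cL\otimes)$; the two differ unless $\Phi_0$ happens to commute with $\cL^{\otimes 2}\otimes$. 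The telescoping argument you give for the second claim is fine once the first claim is established in the correct form.
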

\begin{proof} The commutativity of the following diagram implies that $(\cL^\dual \otimes) \psi_k (\cL\otimes) = \psi_{k+w}$
$$\xymatrix@R=8pt{\D^b(X^{ss}_-/G) \ar[d]^{\otimes \cL} & \G^-_{k+w} \ar[r] \ar[l] \ar[d]^{\otimes \cL} & \D^b(X^{ss}_+/G) \ar[d]^{\otimes \cL} \\ \D^b(X^{ss}_-/G) & \G^-_{k} \ar[r] \ar[l] & \D^b(X^{ss}_+/G) }$$
\end{proof}

Here we are able to give a fairly explicit description of $\Phi_w$ from the perspective of mutation. Note that because $\G^+_{w+1} = \G^-_{w^\prime}$, the inverse of the restriction $\G^+_{w+1} \to \D^b(X^{ss}_- / G)$ is the right adjoint $r_-^R$, whereas the inverse of the restriction $\G^-_{w^\prime+1} \to \D^b(X^{ss}_- / G)$ was the left adjoint $r_-^L$ by Lemma \ref{lem:adjoints_restriction}.

\begin{prop}\label{prop:wind_mut}
The autoequivalence $\Phi_w$ of $\D^b(X^{ss}_- / G)$ makes the following diagram commute, i.e. $\Phi_w = r_- \circ \lmut_{\cA^+_w} \circ r_-^R$.
$$
\xymatrix{
\G^+_{w+1} \ar[r]^{\lmut_{\cA^+_{w}}} & \G^+_{w} \ar[d]^{r_-} \\ 
\ar[u]^{r^R_- = r^{-1}_-} \ar[r]^{\Phi_w} \D^b(X^{ss}_- / G) & \D^b(X^{ss}_- / G) 
}
$$
\end{prop}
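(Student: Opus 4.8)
The plan is to chase the definitions through the two semiorthogonal decompositions and the identifications from Observation \ref{observation:wall_crossing}. First I would recall that by definition $\Phi_w = \psi_{w+1}^{-1} \psi_w$ where $\psi_v = r_+ \circ r_-^{-1}$, with $r_\pm : \G^-_v \to \D^b(X^{ss}_\pm/G)$ the restriction equivalences coming from derived Kirwan surjectivity (Theorem \ref{thm:derived_Kirwan_surjectivity}). So $\Phi_w = (r_+ r_-^{-1})_{\text{at level }w+1}^{-1} \circ (r_+ r_-^{-1})_{\text{at level }w}$, which rearranges to $r_- \circ (\text{the identification }\G^-_{w+1} \to \G^-_w \text{ through }X^{ss}_+) \circ r_-^{-1}$ — but the key point is to reinterpret the middle map as a mutation on the $+$ side. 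Using Observation \ref{observation:wall_crossing}, with $w' = -\eta - w$, we have $\G^-_{w'} = \G^+_{w+1}$ and $\G^-_{w'+1} = \G^+_w$, so the category $\cC^+_w$ carries the two semiorthogonal decompositions $\cC^+_w = \sod{\G^+_w, \cA^+_w} = \sod{\cA^+_w, \G^+_{w+1}}$ of Equation \eqref{eqn:two_semi_decomps}.

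The crux is then to identify the composite $r_- \circ (\text{transport through } D^b(X^{ss}_+/G)) \circ r_-^{-1}$ with $r_- \circ \lmut_{\cA^+_w} \circ r_-^R$. I would argue as follows. By Lemma \ref{lem:adjoints_restriction} applied to $\cC^+_w$, the inverse of $r_+ : \G^+_w \to \D^b(X^{ss}_+/G)$ is its right adjoint, and the inverse of $r_+ : \G^+_{w+1} \to \D^b(X^{ss}_+/G)$ is its left adjoint; meanwhile, as noted in the paragraph preceding the proposition, the inverse of $r_- : \G^+_{w+1} \to \D^b(X^{ss}_-/G)$ is $r_-^R$ (since $\G^+_{w+1} = \G^-_{w'}$ and by Lemma \ref{lem:adjoints_restriction} $r^R$ lands in the lower window $\G_{w'}$), while the inverse of $r_- : \G^+_w \to \D^b(X^{ss}_-/G)$ is $r_-^L$. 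Feeding these into $\psi_{w+1}^{-1}\psi_w$ shows $\Phi_w$ sends $F \in \D^b(X^{ss}_-/G)$ to its $r_-^R$-lift into $\G^+_{w+1}$, then transports it to the object of $\G^+_w$ with the same restriction to $X^{ss}_+$, then restricts back down via $r_-$. But "transport an object of $\G^+_{w+1}$ to the unique object of $\G^+_w$ with the same restriction to $X^{ss}_+$" is exactly the window-shift $\lmut_{\cA^+_w}$ by the content of diagram \eqref{eqn:mutation_is_window_shift} — the mutation triangle \eqref{eqn:define_l_mutate} kills $\cA^+_w = \ker(r_+|_{\cC^+_w})$, so $r_+(\lmut_{\cA^+_w} F) \simeq r_+(F)$, and $\lmut_{\cA^+_w}F \in \G^+_w$ is characterized by this. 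Hence the middle arrow is $\lmut_{\cA^+_w}$ and $\Phi_w = r_- \circ \lmut_{\cA^+_w}\circ r_-^R$, which is precisely the non-commuting diagram in the statement.

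I expect the main obstacle to be bookkeeping the two sets of adjunctions consistently: one must be careful that the window appearing as the \emph{source} of a mutation is the one whose restriction equivalence inverts to the \emph{left} adjoint on one side but to the \emph{right} adjoint on the other side, and that Observation \ref{observation:wall_crossing}'s index shift $w \mapsto w' = -\eta-w$ is orientation-reversing (it swaps $\G_w \leftrightarrow \G_{w'+1}$), so the roles of $r^L$ and $r^R$ get interchanged when passing between the $+$ and $-$ descriptions. Once the identification $\G^+_{w+1}=\G^-_{w'}$, $\G^+_{w}=\G^-_{w'+1}$ is pinned down and Lemma \ref{lem:adjoints_restriction} is invoked on both the $+$ and $-$ linearizations, the rest is formal. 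A clean way to organize the argument is to draw the hexagon whose three "up" vertices are $\G^+_{w+1} = \G^-_{w'}$, $\G^+_w = \G^-_{w'+1}$, and $\D^b(X^{ss}_+/G)$, whose three "down" vertices involve $\D^b(X^{ss}_-/G)$ twice and $\cA^+_w$ implicitly, and to check commutativity of each triangular face against Lemmas \ref{lem:adjoints_restriction}, \ref{lem:adjoint_unstable} and the mutation triangle \eqref{eqn:define_l_mutate}; I would present only that diagram and the face-by-face verification rather than writing out functor composites in coordinates.
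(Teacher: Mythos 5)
Your proof is correct and is essentially the paper's own argument: the paper's proof consists of the single sentence that the proposition is a rewriting of Diagram \eqref{eqn:mutation_is_window_shift} using Observation \ref{observation:wall_crossing}, and your unwinding of $\psi_{w+1}^{-1}\psi_w$ through the identifications $\G^+_{w+1}=\G^-_{w'}$, $\G^+_w=\G^-_{w'+1}$ together with Lemma \ref{lem:adjoints_restriction} is exactly that rewriting spelled out. The only caveat, which the paper itself glosses over, is the relabeling of the window index ($w$ versus $w'=-\eta-w$) when passing between the $+$ and $-$ descriptions; you flag this correctly.
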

\begin{proof} This is essentially rewriting Diagram \eqref{eqn:mutation_is_window_shift} using Observation \ref{observation:wall_crossing} and its consequences.
\end{proof}


\subsection{Window shifts are spherical twists}\label{subsect:win_shifts_are_spherical}
Next we show that the window shift autoequivalence $\Phi_w$ is a twist corresponding to a spherical functor.

This generalizes a spherical object \cite{ST01}, which is equivalent to a spherical functor $\D^b(k-vect) \to \cB$. By describing window shifts both in terms of mutations and as spherical twists, we show why these two operations have the ``same formula" in this setting. In fact, in the next section we show that spherical twists can always be described by mutations.

Let $E: = Y^+ \cap X^{ss}_-$, it is $P_+$-equivariant, and let $\tilde{E} = S^+ \cap X^{ss}_- = G \cdot E$. Then we consider the diagram
\begin{equation} \label{eqn:EZ_diagram}
\xymatrix{ E / P_+ = \tilde{E} / G \ar[d]^\pi \ar[r]^j & X_-^{ss}/G \\ Z/L & }
\end{equation}
This is a stacky form of the EZ-diagram used to construct autoequivalences in \cite{Ho05}. We define the transgression along this diagram $f_w = j_\ast \pi^\ast : \D^b(Z/L)_w \to \D^b(X^{ss}_- /G)$. Note that we have used the same letters $\pi$ and $j$ for the restriction of these maps to the open substack $E/P_+ \subset Y^+/P_+$, but we denote this transgression $f_w$ to avoid confusion.

\begin{prop}\label{prop:window_shift_formula}
The window shift functor $\Phi_w$ is defined for $F^\bdot \in \D^b(X^{ss}_- / G)$ by the functorial mapping cone
$$f_w f_w^R (F^\bdot) \to F^\bdot \to \Phi_w(F^\bdot) \parr$$
\end{prop}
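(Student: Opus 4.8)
The strategy is to identify the composite $r_- \circ \lmut_{\cA^+_w} \circ r_-^R$ from Proposition \ref{prop:wind_mut} with the cone on $f_w f_w^R \to \id$, by showing that, after transporting along the equivalence $r_-^R : \D^b(X^{ss}_-/G) \xrightarrow{\simeq} \G^+_{w+1}$, the composite $\iota^+_w (\iota^+_w)^R$ appearing in the defining triangle \eqref{eqn:define_l_mutate} for $\lmut_{\cA^+_w}$ becomes $f_w f_w^R$. First I would recall that the left-mutation triangle reads
$$\iota^+_w (\iota^+_w)^R (G^\bdot) \to G^\bdot \to \lmut_{\cA^+_w} G^\bdot \parr$$
for $G^\bdot \in \G^+_{w+1}$, and that by Proposition \ref{prop:wind_mut} we obtain $\Phi_w$ by setting $G^\bdot = r_-^R F^\bdot$ and applying $r_-$. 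Since $r_-$ is exact and an equivalence, it suffices to prove the natural isomorphism of functors
$$r_- \circ \iota^+_w \circ (\iota^+_w)^R \circ r_-^R \;\simeq\; f_w \circ f_w^R$$
on $\D^b(X^{ss}_-/G)$, compatibly with the map to $\id$ (the right-hand map to $r_- G^\bdot = F^\bdot$ must match the counit $f_w f_w^R \to \id$).

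\textbf{Key steps.} The argument factors into two halves. (1) Identify the ``$f_w$'' side. The functor $f_w = j_\ast \pi^\ast : \D^b(Z/L)_w \to \D^b(X^{ss}_-/G)$ uses diagram \eqref{eqn:EZ_diagram}, where $E/P_+ = \tilde E/G = S^+ \cap X^{ss}_-$ is the open part of the stratum $S^+$ surviving in the minus-quotient. On the other hand $\iota^+_w : \D^b(Z/L)_{[\lambda^+=w]} \to \cA^+_w \subset \cC^+_w$ is $j^+_\ast \pi^{+\ast}$ for the \emph{closed} stratum $S^+ \subset X$ (by the description following Lemma \ref{lem:unstable_category} and the proof of Lemma \ref{lem:adjoint_unstable}, $\iota_w$ is $\pi^\ast$ followed by pushforward along the closed immersion $j$). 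Restricting to $X^{ss}_-$ via $r_-$ and using flat base change along the open immersion $X^{ss}_- \hookrightarrow X$, together with the fact that $S^+ \cap X^{ss}_- = \tilde E$, gives $r_- \circ \iota^+_w \simeq f_w$ as functors $\D^b(Z/L)_w \to \D^b(X^{ss}_-/G)$. This is the geometric heart of the matter: the closed-stratum pushforward becomes the open-stratum transgression after throwing away the unstable locus. (2) Identify the adjoints. We must show $(\iota^+_w)^R \circ r_-^R \simeq f_w^R$. Equivalently, taking left adjoints, $r_- \circ \iota^+_w \simeq f_w$ — which is step (1). Here one uses that right adjoints compose: $f_w^R = ((r_-\iota^+_w))^R$ would require $r_-$ to have $r_-^R$ as right adjoint restricted appropriately; more carefully, since $r_- : \G^+_{w+1} \to \D^b(X^{ss}_-/G)$ is an equivalence with inverse $r_-^R$, and $\iota^+_w$ factors through $\cC^+_w$, I would instead directly compute $f_w^R = \pi_\ast j^!$ and match it with $(\iota^+_w)^R$ composed with the projection $\cC^+_w \to \G^+_{w+1}$ (which is what $(\iota^+_w)^R r_-^R$ computes after identifying $\D^b(X^{ss}_-/G) \simeq \G^+_{w+1}$), using Lemma \ref{lem:adjoint_unstable}'s formula $\iota_w^R(F^\bdot) = (\sigma^\ast F^\bdot)_{w+\eta} \otimes \det N_S X|_Z$ and the analogous Grothendieck-duality description $j^! = \det(N_S X) \otimes j^\ast[\text{shift}]$ on the open stratum. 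Finally I would check the two triangles agree: the counit $f_w f_w^R \to \id$ and the image under $r_-$ of the mutation counit $\iota^+_w(\iota^+_w)^R \to \id$ are both induced by the $(j_\ast, j^!)$ and $(\pi^\ast,\pi_\ast)$ adjunctions, hence coincide, so the cones agree.

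\textbf{Main obstacle.} The delicate point is step (1) — controlling how $\iota^+_w$, which is defined via the \emph{closed} KN stratum $S^+$ inside all of $X$ (or inside $X^{ss}(L_0)$), interacts with restriction to the open substack $X^{ss}_-$, and verifying that its essential image there is exactly the image of $f_w$ with the right weight normalization. One must be careful that $Y^+ \cap X^{ss}_- = E$ is open in $Y^+$ but the limit map $\pi$ still makes sense (points of $E$ still flow into $Z$ under $\lambda^+$ since $Z$ is unstable in both quotients, as emphasized in Section \ref{sect:mutations}), and that no higher pushforward corrections appear because $j$ is a closed (affine, or at least nice enough) immersion of the surviving part of the stratum. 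The bookkeeping with the twist by $\det N_S X|_Z$ and the shift in $j^!$ versus $j^\ast$, and checking these cancel or match across $\lambda^+$ versus $\lambda^- = (\lambda^+)^{-1}$ under Observation \ref{observation:wall_crossing}, is the other place where sign/weight errors could creep in; I would handle it by working with a single stratum and tracking weights explicitly as in Lemma \ref{lem:adjoint_unstable}.
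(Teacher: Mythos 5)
Your proposal is correct and follows essentially the same route as the paper: starting from Proposition \ref{prop:wind_mut} and the mutation triangle \eqref{eqn:define_l_mutate}, one identifies $f_w = r_- \circ \iota_w^+$ (which the paper treats as holding ``by construction,'' where you supply the base-change justification) and then passes to right adjoints to get $f_w^R \simeq (\iota_w^+)^R r_-^R$. The extra direct computation via $\pi_\ast j^!$ and the counit-matching check are more detail than the paper gives but do not change the argument.
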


\begin{proof}
This essentially follows from abstract nonsense. By the definition of left mutation \eqref{eqn:define_l_mutate}, and by the fact that $r_- r^R_- = \id_{\D^b(X_-^{ss}/G)}$, it follows that the window shift autoequivalence is defined by the cone
$$r_- \iota^+_w (\iota^+_w)^R r_-^R (F^\bdot) \to F^\bdot \to \Phi_w (F^\bdot) \parr$$
Furthermore, by construction we have $f_w = r_- \iota_w^+$, so $f_w^R \simeq (\iota_w^+)^R r_-^R$. The claim follows.
\end{proof}

Consider the case where $\D^b(Z/L)_w$ is generated by a single exceptional object $E$. The object $E^+ := \iota_w^+ E \in \cA^+_w$ is exceptional, and the left mutation functor \eqref{eqn:define_l_mutate} acts on $F^\bdot \in \G^+_{w+1}$ by
$$\op{Hom}_{X/G} (E^+, F^\bdot) \otimes E^+  \to F^\bdot \to \lmut_{\cA^+_w} (F^\bdot) \parr$$
To emphasize the dependence on $E^+$ we write $\lmut_{E^+} := \lmut_{\cA^+_w}$.  As we have shown, $\lmut_{E^+}(F^\bdot)|_{X^{ss}_-}$ is the window shift autoequivalence $\Phi_w(F^\bdot|_{X^{ss}_-})$. If we restrict the defining exact triangle for $\lmut_{E^+}(F^\bdot)$ to $X^{ss}_-$ we get
$$\op{Hom}_{X/G} (E^+,F^\bdot) \otimes E^+|_{X^{ss}_-} \to F^\bdot|_{X^{ss}_-} \to \Phi_w(F^\bdot|_{X^{ss}_-}) \parr$$
Define the object $S = E^+|_{X^{ss}_-} \in \D^b(X^{ss}_-/G)$. The content of Proposition \ref{prop:window_shift_formula} is that the canonical map $\op{Hom}_{X/G} (E^+,F^\bdot) \to \op{Hom}_{X^{ss}_-/G}(S , F^\bdot|_{X^{ss}_-} )$ is an isomorphism, so that $\Phi_w = \lmut_{E^+}|_{X^{ss}_-}$ is the spherical twist $T_S$ by the object $S$. This can be verified more directly using the following
\begin{lem} \label{lem:local_cohomology}
For $F^\bdot, G^\bdot \in \cC^-_{w^\prime}$,
$$R\Gamma_{S^-} \inner{\op{Hom}}_{X/G}(F^\bdot,G^\bdot) \simeq \op{Hom}_{Z/L} ((\sigma^\ast F^\bdot)_{w^\prime}, (\sigma^\ast G^\bdot \otimes \kappa_-)_{w^\prime} ).$$
Equivalently we have an exact triangle
$$\op{Hom}_{Z/L} ((\sigma^\ast F^\bdot)_{w^\prime}, (\sigma^\ast G^\bdot \otimes \kappa_-)_{w^\prime} ) \to \op{Hom}_{X/G}(F^\bdot, G^\bdot) \to \op{Hom}_{X^{ss}_- / G} (F^\bdot|_{X^{ss}_-},G^\bdot|_{X^{ss}_-}) \parr$$
\end{lem}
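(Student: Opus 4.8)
The plan is to identify the local cohomology $R\Gamma_{S^-}\inner{\op{Hom}}_{X/G}(F^\bdot,G^\bdot)$ with a $\op{Hom}$ computed on $Z/L$, using the structure of the KN stratum $S^-$ and the adjunctions already recorded in Section \ref{sect:background}. The starting point is the standard triangle relating local cohomology along the closed substack $S^-$ to the restriction to the open complement $X^{ss}_-$: for any $F^\bdot, G^\bdot$ one has
$$R\Gamma_{S^-}\inner{\op{Hom}}_{X/G}(F^\bdot,G^\bdot) \to \inner{\op{Hom}}_{X/G}(F^\bdot,G^\bdot) \to Rj_{-\ast}\inner{\op{Hom}}_{X^{ss}_-/G}(F^\bdot|_{X^{ss}_-},G^\bdot|_{X^{ss}_-}) \parr$$
Taking global sections gives the second (``equivalently'') formulation, so it suffices to prove the first isomorphism; the two statements are then interchangeable. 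Note the hypothesis $F^\bdot,G^\bdot \in \cC^-_{w^\prime}$ enters precisely so that the middle and right terms are the $\op{Hom}$-complexes appearing in the statement — this is where grade restriction is used.

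First I would reduce the computation of $R\Gamma_{S^-}(-)$ to the stratum itself. Since $S^-$ is a closed KN stratum with the fibration structure $S^- = G\times_{P_-} Y^-$ and $\pi^-: S^- \to Z$, local cohomology along $S^-$ of an internal $\op{Hom}$ can be rewritten using the functor $j_-^!$ (equivalently, $R\underline{\Gamma}_{S^-}$ followed by restriction), giving
$$R\Gamma_{S^-}\inner{\op{Hom}}_{X/G}(F^\bdot,G^\bdot) \simeq \op{Hom}_{S^-/G}\big(j_-^\ast F^\bdot,\, j_-^! G^\bdot\big).$$
Then I would use $j_-^! G^\bdot \simeq j_-^\ast G^\bdot \otimes \det(N_{S^-}X)[-c]$ (with $c$ the codimension), exactly as in the proof of Lemma \ref{lem:adjoint_unstable}, where the analogous adjunction $\op{Hom}_{X/G}(\iota_w G^\bdot, F^\bdot)\simeq \op{Hom}_{S/G}(\pi^\ast G^\bdot, \det(N_S X)\otimes j^\ast F^\bdot)$ appears. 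This converts the problem to a $\op{Hom}$ over $S^-/G$ between $j_-^\ast F^\bdot$ and a twist of $j_-^\ast G^\bdot$.

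Next I would push down to $Z/L$. The key inputs are: (i) $\pi^\ast: \D^b(Z/L)_{w'} \to \D^b(S^-/G)_{w'}$ is an equivalence with inverse $\sigma^\ast$ (used already in the proof of Lemma \ref{lem:adjoint_unstable}); (ii) the baric decomposition of $\D^b(S^-/G)$ from \cite{HL12}, which lets one replace $j_-^\ast F^\bdot$ and $j_-^\ast G^\bdot\otimes\det N$ by the relevant baric truncations; and (iii) the grade restriction hypothesis, which forces the weights of $\sigma^\ast F^\bdot$ and $\sigma^\ast G^\bdot\otimes\kappa_-$ to line up so that only the weight-$w'$ summands contribute — here $\kappa_- := \det(N_{S^-}X)|_Z$ (up to a shift), which is why the twist by $\kappa_-$ appears on the right-hand side. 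Combining these collapses $\op{Hom}_{S^-/G}(j_-^\ast F^\bdot, j_-^\ast G^\bdot\otimes\det N)$ to $\op{Hom}_{Z/L}\big((\sigma^\ast F^\bdot)_{w'}, (\sigma^\ast G^\bdot\otimes\kappa_-)_{w'}\big)$, exactly as claimed.

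The main obstacle I anticipate is bookkeeping the weight shifts and the shift $[-c]$ correctly: one must check that the $\det(N_{S^-}X)$ twist coming from $j^!$ combines with the window $[w',w'+\eta]$ so that it is precisely the weight-$w'$ part of $\sigma^\ast G^\bdot\otimes\kappa_-$ (and not, say, the weight-$(w'+\eta)$ part) that survives, and that the cohomological shift is absorbed into the normalization of $\kappa_-$ — equivalently, that the two descriptions of $\iota_w^{-1}$ in Lemma \ref{lem:unstable_category} are being applied consistently. This is the same $(\sigma^\ast F^\bdot)_w$ versus $(\sigma^\ast F^\bdot)_{w+\eta}\otimes\det(N_S X)$ dichotomy appearing throughout Section \ref{sect:background}, so once the conventions are pinned down the argument is a direct assembly of the cited results; everything else is the formal local-cohomology triangle and the adjunction manipulations already carried out in the proof of Lemma \ref{lem:adjoint_unstable}.
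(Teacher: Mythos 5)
Your overall strategy (local cohomology triangle, then push the supported term down to $Z/L$) can be made to work, but as written it has a genuine gap at the step where you assert, as a formal property of the closed immersion $j_-$, that
$$R\Gamma_{S^-} \inner{\op{Hom}}_{X/G}(F^\bdot,G^\bdot) \simeq \op{Hom}_{S^-/G}\bigl(j_-^\ast F^\bdot,\, j_-^! G^\bdot\bigr).$$
In coherent duality this identity is false in general: local cohomology with supports sees the entire formal neighborhood of $S^-$, so (using that $F^\bdot$ is perfect) the left side is $\varinjlim_n \op{Hom}_{X/G}(F^\bdot \otimes \cO_{S_n}, G^\bdot)$ with $S_n$ the $n$-th infinitesimal neighborhood, whereas $\op{Hom}_{S^-/G}(j^\ast F^\bdot, j^! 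G^\bdot) = \op{Hom}_{X/G}(j_\ast j^\ast F^\bdot, G^\bdot)$ is only the $n=1$ term. (Already for $F^\bdot = G^\bdot = \cO_X$ on $\bA^1$ with $S = \{0\}$ the two sides are $k[x]_x/k[x]\,[-1]$ and $k[-1]$.) The identification you want is precisely where the hypothesis $F^\bdot, G^\bdot \in \cC^-_{w^\prime}$ must do its work: the graded pieces of the completion contribute $\op{Hom}_{S^-/G}(j^\ast F^\bdot \otimes \op{Sym}^{n-1} N^\dual_{S^-}X,\, j^!G^\bdot)$, and since $N^\dual_{S^-}X|_Z$ has strictly positive $\lambda^-$-weights, for $n \geq 2$ the first argument leaves the window $[w^\prime, w^\prime+\eta]$ while the second stays below $w^\prime$, so these terms vanish by the baric semiorthogonality. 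You instead locate the use of the hypothesis in identifying the middle and right terms of the triangle (which hold for arbitrary $F^\bdot, G^\bdot$ with $F^\bdot$ perfect) and in the weight bookkeeping on $S^-$ itself; the completion vanishing, which is the hardest point, is left unaddressed.

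For comparison, the paper avoids local cohomology and duality for $j_-$ altogether: it applies $\op{Hom}(F^\bdot, \bullet)$ to the decomposition triangle $\iota \iota^R G^\bdot \to G^\bdot \to (\text{projection to } \G^-_{w^\prime})$ attached to $\cC^-_{w^\prime} = \sod{\G^-_{w^\prime}, \cA^-_{w^\prime}}$, which formally produces the exact triangle in the ``equivalently'' formulation with first term $\op{Hom}(\iota^L F^\bdot, \iota^R G^\bdot)$, and then quotes Lemma \ref{lem:adjoint_unstable}. If you want to keep your more geometric route, you need to add the vanishing of the higher infinitesimal contributions described above; otherwise the cleanest repair is to substitute the paper's purely categorical argument for your step reducing to the stratum.
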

\begin{proof}
Let $\cC = \sod{\cA,\cB}$ be a semiorthogonal decomposition of a pretriangulated dg-category, and let $\iota_\cA$ and $\iota_\cB$ be the inclusions. Applying $\op{Hom}(F, \bullet)$ to the canonical exact triangle $\iota_\cB \iota_\cB^R G \to G \to \iota_\cA \iota_\cA^L G \parr$ gives the exact triangle
$$\op{Hom}_\cB (\iota_\cB^L F, \iota_\cB^R G) \to \op{Hom}_\cC (F,G) \to \op{Hom}_\cA (\iota_\cA^L F, \iota_\cA^L G) \parr$$
assuming $\cB$ is left admissible. The lemma is just a special case of this fact for the semiorthogonal decomposition $\cC^-_{w^\prime} = \sod{\G^-_{w^\prime}, \cA^-_w}$, using the description of the adjoint functors in Lemma \ref{lem:adjoint_unstable}.
\end{proof}

In summary, we have given a geometric explanation for the identical formulas for $L_{E^+}$ and $T_S$: the spherical twist is the restriction to the GIT quotient of a left mutation in the equivariant derived category.

\begin{ex}
Let $X$ be the crepant resolution of the $A_n$ singularity. It is the $2$ dimensional toric variety whose fan in $\bZ^2$ has rays spanned by $(1,i)$, for $i=0,\ldots,n+1$, and which has a $2$-cone for each pair of adjacent rays. Removing one of the interior rays corresponds to blowing down a rational curve $\bP^1 \subset X$ to an orbifold point with $\bZ/2\bZ$ stabilizer. This birational transformation can be described by a VGIT in which $Z / L^\prime \simeq \pt$. The spherical objects corresponding to the window shift autoequivalences are $\cO_{\bP^1}(m)$.
\end{ex}

\begin{rem}
Horja \cite{Ho05} introduced the notion of an EZ-spherical object $F^\bdot \in \D^b(E/P_+)$ for a diagram $Z/L^\prime \xleftarrow{q} E/P_+ \xrightarrow{j} X^{ss}_- / G$ -- his notion is equivalent to the functor $j_\ast (F^\bdot \otimes q^\ast (\bullet))$ being spherical \ref{def:spherical_functor}. Proposition \ref{prop:window_shift_formula} amounts to the fact that $\cO_{E/P_+}$ is an EZ-spherical object for this diagram. By the projection formula $q^\ast \cL$ is EZ-spherical for any invertible sheaf $\cL$ on $Z$. The twist functors corresponding to different choices of $\cL$ are equivalent.
\end{rem}

\begin{rem}
Our results also extend results in \cite{Se,DS}.  The first work formally introduced grade restriction windows to the mathematics literature and showed that window shift equivalences are given by spherical functors in the context of gauged Landau-Ginzburg models.  (See subsection \ref{subsect:complete_intersections}.)  In the second work, the authors study window shift autoequivalences associated to Grassmannian flops, using representation theory of $GL(n)$ to compute with homogeneous bundles. 
\end{rem}


\subsection{All spherical twists are mutations}
\label{sect:sphmut}

We have shown that the window shift $\Phi_w$ is a twist $\op{Cone}(f_w f_w^R \to \id)$ corresponding to a functor $f_w : \D^b(Z/L)_w \to \D^b(X^{ss}_- / G)$. Now we show that this $f_w$ is \emph{spherical} \cite{A}, and in fact any autoequivalence of a dg-category arising from mutations as $\Phi_w$ does is a twist by a spherical functor. Conversely, any spherical functor between dg-categories with a compact generator arises from mutations.

Using the equalities of Observation \ref{observation:wall_crossing}, we have the following semiorthogonal decompositions of $\cC_w^+ = \cC^-_{w^\prime}$, all coming from \eqref{eqn:two_semi_decomps}:
\begin{equation} \label{eqn:wall_cross_semidecomp}
\xymatrix{\sod{\cA^+_w, \G^+_{w+1} } \ar@{=>}[r]^{L_{\cA^+_w}} \ar@{<=} `d[r] `[rrr]_{L_{\G_{w+1}^+}} [rrr] & \sod{ \G^+_w, \cA^+_w } \ar@{=>}[r]^{L_{\G^+_w}} & \sod{ \cA^-_{w^\prime}, \G^+_w} \ar@{=>}[r]^{L_{\cA^-_{w^\prime}}} & \sod{ \G^+_{w+1}, \cA^-_{w^\prime}} }
\end{equation}
where we conclude a fortiori that each semiorthogonal decomposition arises from the previous one by left mutation. Each mutation gives an equivalence between the corresponding factors in each semiorthogonal decomposition, and the autoequivalence $\Phi_w$, interpreted as an autoequivalence of $\G^+_w$, is obtained by following the sequence of mutations.

\begin{rem}\label{rem:braid_functor}
The braid group $B_n$ on $n$ strands acts by mutations on the set of semiorthogonal decompositions of length $n$ (with admissible factors). The fact that the first and last semiorthogonal decompositions in \ref{eqn:wall_cross_semidecomp} are equal means that this semiorthogonal decomposition has a nontrivial stabilizer in $B_2$ under its action on length two semiorthogonal decompositions of $\cC_w$. We would like to point out that this may be a way to produce interesting autoequivalences more generally. Let $\cG_n$ be the groupoid whose objects are strong semiorthogonal decompositions (i.e. all factors are admissible subcategories) of length $n$ and whose morphisms are braids that take one to another by mutation. Let $e=\sod{\cA_1,\dotsc,\cA_n}$ be a semiorthogonal decomposition in the category of interest.  Then $\op{Aut}_{\cG_n}(e)$ is a subgroup of $B_n$ and for each $i$ there is a representation
$$ \op{Aut}(e) \to \op{Aut}(\cA_i), $$
the group of exact autoequivalences of $\cA_i$ up to isomorphism of functors. By construction the autoequivalences in the image of this representation are compositions of mutations.  In the situation above $B_2 = \bZ$ and $\op{Aut}( \sod{\G^+_w, \cA^+_w} ) \subset B_2$ is the index four subgroup.
\end{rem}

Let us recall the definition of spherical functor, which is motivated by \cite{A}.  However, one should see \cite{AL} for the definition below and a complete, rigorous treatment of spherical functors in the formalism of dg-categories and bimodules.

\begin{defn}[\cite{A,AL}]\label{def:spherical_functor}
A dg-functor $S:\cA \to \cB$ of pre-triangulated dg-categories is \emph{spherical} if it admits right and left adjoints $R$ and $L$ such that 
\begin{enumerate}
\item the cone $F_S$ of $\id \to R S$ is an autoequivalence of $\cA$, and
\item the natural morphism $R \to F_S L$ induced by $R \to R S L$ is an isomorphism of functors.
\end{enumerate}
If $S$ is spherical, the cone $T_S$ on the morphism $S R \to \id$ is an autoequivalence called the \emph{twist} corresponding to $S$.
\end{defn}

Suppose that $\cC$ is a pre-triangulated dg category admitting semiorthogonal decompositions
$$ \cC = \sod{\cA,\cB} = \sod{\cB,\cA'} = \sod{\cA',\cB'} = \sod{\cB',\cA}. $$
Denote by $i_\blt$ the inclusion functors.  Since $\cA,\cB,\cA',\cB'$ are admissible, $i_\blt$ admits right and left adjoints $i^R_\blt$ and $i^L_\blt$, respectively.  We can use these functors to describe the mutations
$$ \lmut_{\cA} = i^L_{\cB'}i_{\cB}: \cB \to \cB', \quad \rmut_{\cA} = i^R_{\cB} i_{\cB'}:\cB' \to \cB, $$
with analogous formulae for the other mutations.

\begin{thm}\label{thm:mut_are_sph}
The functor $S:\cA \to \cB$ given by $S = i^L_\cB i_\cA$ is spherical.  Moreover, the spherical twist $T_S:\cB \to \cB$ is obtained as the mutation 
$$ T_S \cong \lmut_{\cA'} \circ \lmut_{\cA}. $$
\end{thm}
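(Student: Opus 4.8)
The plan is to verify the two conditions in Definition \ref{def:spherical_functor} for $S = i^L_\cB i_\cA$ by making the adjoints and cones explicit in terms of the mutation functors, and then to identify $T_S$ with the composite mutation. First I would record the adjoints of $S$. Since $i_\cA^R S^{?}$... more precisely, $S = i^L_\cB i_\cA$ has right adjoint $S^R = i^R_\cA i_\cB$ and left adjoint $S^L = i^?_\cA i_\cB$; the right adjoint exists because $i_\cA$ and $i^L_\cB$ each have right adjoints, and similarly on the left. So the structure maps of the four semiorthogonal decompositions give me all the functors I need.

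Next I would compute the cotwist $F_S = \op{Cone}(\id_\cA \to S^R S)$. The key observation is that the canonical triangle attached to the decomposition $\cC = \sod{\cB, \cA'}$, applied to an object $i_\cA a$, together with the decomposition $\cC = \sod{\cA,\cB}$, expresses $S^R S$ and hence $F_S$ as the composite $i^R_{\cA'} i_{\cA}$ followed by... — concretely I expect to identify $F_S \cong \lmut_{\cB} i_\cA$ viewed as a functor $\cA \to \cA'$, or after transport an equivalence $\cA \to \cA'$ (using $\sod{\cB,\cA'} = \sod{\cA,\cB}$ so that mutation by $\cB$ is an equivalence between the complementary factors $\cA$ and $\cA'$). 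The point is that $F_S$ is manifestly an equivalence because it is a mutation between admissible complements, giving condition (1). For condition (2) I would trace through the natural transformation $S^R \to F_S S^L$: both sides, after unwinding, are built from the inclusion/projection functors of the chain of decompositions, and the map $S^R \to S^R S S^L$ induces an isomorphism because the relevant unit/counit composites are controlled by the semiorthogonality (the ``extra'' terms in the cones vanish by orthogonality between $\cA$, $\cB$, $\cA'$, $\cB'$ in the appropriate order).

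Finally, for the formula $T_S \cong \lmut_{\cA'} \circ \lmut_\cA$, I would compute $T_S = \op{Cone}(S S^R \to \id_\cB)$ directly. Here $SS^R = i^L_\cB i_\cA i^R_\cA i_\cB$, and the counit triangle $i_\cA i^R_\cA i_\cB \to i_\cB \to (\text{stuff in } \cA')$ coming from $\cC = \sod{\cB,\cA'}$, pushed through $i^L_\cB$, shows that $T_S$ applied to $b \in \cB$ is $i^L_\cB$ of the $\cA'$-part of $i_\cB b$, i.e. $T_S = i^L_\cB i_{\cA'} i^L_{\cA'} i_\cB$. Now $i^L_{\cA'} i_\cB = \rmut$... rather, reading off from \eqref{eqn:wall_cross_semidecomp}-style chains, $i^L_{\cB'} i_\cB = \lmut_\cA : \cB \to \cB'$ and $i^L_\cB i_{\cB'} = \lmut_{\cA'} : \cB' \to \cB$, and one checks $i^L_\cB i_{\cA'} i^L_{\cA'} i_\cB \cong (i^L_\cB i_{\cB'})(i^L_{\cB'} i_\cB)$ using the triangle for $\sod{\cA',\cB'}$ applied to $i_{\cB'}$ composed with... — the cleanest route is: for $b \in \cB$, the object $\lmut_\cA b \in \cB'$ fits in $i_\cA i^R_\cA i_\cB b \to i_\cB b \to i_{\cB'}\lmut_\cA b$, and then $\lmut_{\cA'}\lmut_\cA b \in \cB$ fits in $i_{\cA'} i^R_{\cA'} i_{\cB'} \lmut_\cA b \to i_{\cB'}\lmut_\cA b \to i_\cB \lmut_{\cA'}\lmut_\cA b$; splicing these two triangles and comparing with the defining triangle of $T_S$ yields the claimed isomorphism. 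I expect the main obstacle to be condition (2) of sphericality: verifying that the natural morphism $S^R \to F_S S^L$ is an isomorphism requires carefully matching up the several unit/counit maps and invoking the right semiorthogonality vanishing at each stage, rather than any single clever trick; everything else is a fairly mechanical manipulation of the canonical triangles associated to the four semiorthogonal decompositions.
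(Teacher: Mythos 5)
Your overall strategy is the same as the paper's: write the adjoints of $S$ in terms of the inclusion/projection functors of the four decompositions, read off $F_S$ and $T_S$ from the canonical triangles, and verify condition (2) of Definition \ref{def:spherical_functor} by a unit/counit chase using the orthogonality vanishings. Your derivation of $T_S \cong \lmut_{\cA'}\circ\lmut_\cA$ by splicing the triangles attached to $\sod{\cB',\cA}$ and $\sod{\cA',\cB'}$ is correct and is essentially what the paper does.

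Two concrete points need repair before this is a proof. First, the left adjoint is not of the form $i^?_\cA i_\cB$, and "similarly on the left" does not work: $i^L_\cB:\cC\to\cB$ has no obvious left adjoint among the basic functors. The paper produces one by applying $i^L_\cB$ to the triangle $i_{\cB'}i^R_{\cB'}\to\id_\cC\to i_{\cA'}i^L_{\cA'}$ and using $i^L_\cB i_{\cA'}=0$ to conclude $i^L_\cB\cong\lmut_{\cA'}i^R_{\cB'}$, whence $L=i^L_\cA i_{\cB'}\rmut_{\cA'}$ --- note $L$ factors through $\cB'$, not $\cB$. This is not cosmetic: condition (2) asserts that a \emph{specific} natural map $R\to F_S L$ is an isomorphism, and you cannot even write that map down without the correct formula for $L$. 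Second, the cotwist is not a single mutation $\cA\to\cA'$: applying $i^R_\cA(\,\cdot\,)i_\cA$ to $\id_\cC\to i_\cB i^L_\cB\to i_{\cA'}i^R_{\cA'}[1]$ gives $F_S\cong i^R_\cA i_{\cA'}i^R_{\cA'}i_\cA[1]=\rmut_{\cB'}\rmut_\cB[1]$, a composite of \emph{two} mutations $\cA\to\cA'\to\cA$ together with a shift; your candidate $\lmut_\cB i_\cA$ does not even have source and target equal to $\cA$, as an autoequivalence must. Your "mutations between admissible complements are equivalences" heuristic does give condition (1) once the formula is corrected. With these fixes, what remains is exactly the diagram chase you anticipate for condition (2); in the paper it comes down to showing two maps become isomorphisms after applying the appropriate functors, using the vanishings $i^R_\cA i_{\cB'}=0$ and $i^R_{\cA'}i_\cB=0$.
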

\begin{proof}
We must produce left and right adjoints for $S$, then check the two parts of the definition.  Clearly the right adjoint to $S$ is $R = i_{\cA}^R i_\cB$.  In order to compute the left adjoint, we first apply $i^L_\cB$ to the triangle
$$  i_{\cB'}i^R_{\cB'} \to \id_{\cC} \to i_{\cA'} i^L_{\cA'} \parr. $$
Since $i^L_\cB i_{\cA'}=0$ we see that the map 
$$ \lmut_{\cA'} i^R_{\cB'} = i^L_{\cB} i_{\cB'} i^R_{\cB} \to i^L_{\cB} $$
is an isomorphism.  Using the fact that $\lmut_{\cA'}$ and $\rmut_{\cA'}$ are biadjoint, it follows that $L = i^L_\cA i_{\cB'} \rmut_{\cA'}$.

To establish (1), we will express $F_S$ in terms of mutations.  Begin with the triangle, 
\begin{equation}\label{eq:tri1}
 \id_{\cC} \to i_\cB i^L_\cB \to i_{\cA'}i^R_{\cA'}[1] \parr. 
 \end{equation}
Then apply $i^R_\cA$ on the right and $i_\cA$ on the left to get a triangle
$$ \id_{\cA} \to i^R_{\cA} i_\cB i^L_\cB i_\cA = RS \to i^R_\cA i_{\cA'}i^R_{\cA'}i_\cA[1] = \rmut_{\cB'} \rmut_{\cB}[1] \parr. $$
Since $i_\cA$ is fully faithful, the first map is the unit of the adjunction between $S$ and $R$ so we see that $F_S \cong \rmut_{\cB'}\rmut_{\cB}[1]$.  Hence it is an equivalence.  A very similar computation shows that $T_S \cong \lmut_{\cA'}\lmut_{\cA}$.

We now verify (2), that the composition $R \to RSL \to F_S L$ is an isomorphism.  The map $R \to F_S L$ is the composite
\begin{multline*} R = i^R_\cA i_\cB \to 
(i^R_\cA i_\cB)( i^L_\cB i_{\cB'} i^R_{\cB'} i_\cA)( i^L_\cA i_{\cB'} i^R_{\cB'} i_\cB) \to \\
 RSL = (i^R_\cA i_\cB)( i^L_\cB i_\cA)( i^L_\cA i_{\cB'} i^R_{\cB'} i_\cB) \to  
 F_S L = i^R_{\cA} i_{\cA'} i^R_{\cA'} i_{\cA} i^L_{\cA} i_{\cB'} i^R_{\cB'} i_\cB
\end{multline*}
where the middle map comes from the isomorphism $i^L_\cB i_{\cB'} i^R_{\cB'} i_\cA \to S = i^L_\cB i_\cA$ that we discussed in the preceding paragraphs.  The first map is obtained by applying $R \lmut_{\cA'}$ and $\rmut_{\cA'}$ to the left and right, respectively of the unit morphism $ \id_{\cB'} \to (i^R_{\cB'} i_\cA )(i^L_\cA i_{\cB'})$.  To get the last map one applies $i^R_\cA$ and $i_\cA i^L_\cA i_{\cB'} i^R_{\cB'} i_\cB$ to the left and right, respectively of the map $i_\cB i^L_\cB \to i_{\cA'} i^R_{\cA'}[1]$ from the triangle \eqref{eq:tri1}. 

In order to understand the morphism $R \to RSL$, consider the commutative diagram
$$
\xymatrix@=22pt{
i^L_\cB i_{\cB'} \ar[r]^(.4){\id \circ \epsilon} \ar[dr]_{=} & i^L_\cB i_{\cB'}  i^R_{\cB'} i_{\cB'} \ar[r]^(.45){\id \circ \epsilon \circ \id} \ar[d]^{\id \circ \eta \circ \id} & i^L_{\cB} i_{\cB'} i^R_{\cB'} i_\cA i^L_\cA i_{\cB'} \ar[d]^{\id \circ \eta \circ \id} \\
 & i^L_\cB i_{\cB'} \ar[r]^{\id \circ \epsilon \circ \id} & i^L_\cB i_{\cA} i^L_{\cA} i_{\cB'} }
$$
In this diagram, units and counits of adjunctions are denoted $\epsilon$ and $\eta$, respectively.  The map $R \to RSL$ is obtained by applying $i^R_\cA i_\cB$ and $i^R_{\cB'}i_{\cB}$ on the left and right, respectively, to the clockwise composition from the upper left to the lower right.  On the other hand the counterclockwise composite from the upper left to the lower right comes from the unit morphism $\id_\cC \to i_\cA i^L_\cA$ by applying $i^L_\cB$ and $i_{\cB'}$ on the right and left, respectively.  Therefore we get $R \to R S L$ by applying $i^R_\cA i_\cB i^L_\cB$ and $i_{\cB'} i^R_{\cB'}i_{\cB}$ to the left and right of this unit morphism, respectively.

Next, consider the commutative diagram
$$
\xymatrix@=22pt{
i_\cB i^L_\cB \ar[r]^{\id \circ \epsilon} \ar[d] & i_\cB i^L_\cB i_\cA i^L_\cA \ar[d] \\
i_{\cA'} i^R_{\cA'}[1] \ar[r]^{\id \circ \epsilon} & i_{\cA'} i^R_{\cA'} i_\cA i^L_\cA[1] 
}
$$
We have established now that the map $R \to F_S L$ is obtained from the clockwise composition in this diagram by applying $i^R_\cA$ and $i_{\cB'} i^R_{\cB'}i_\cB$ on the left and right, respectively.  Let us examine what happens when we apply these functors to the whole commutative diagram.  From the triangle \eqref{eq:tri1} and the fact that $i^R_\cA i_{\cB'} = 0$ we see that the left vertical map becomes an isomorphism.  Moreover, the unit map fits into the triangle
$$ i_\cB i^R_\cB \to \id_{\cC} \to i_\cA i^L_\cA \parr $$
and since $i^R_{\cA'} i_\cB = 0$ it follows that the bottom horizontal map becomes an isomorphism as well.  So $R \to F_S L$ is an isomorphism.
\end{proof}

\begin{rem}
There are other functors arising from the sequence of semiorthogonal decompositions in the statement of Theorem \ref{thm:mut_are_sph}, such as $i^R_{\cB^\prime} i_\cA : \cA \to \cB^\prime$, which are spherical because they are obtained from $S$ by composing with a suitable mutation. The corresponding spherical twist autoequivalences can also be described by mutation in $\cC$.
\end{rem}

We can also obtain a converse to this statement.  Suppose that $A^\bdot$ and $B^\bdot$ are dg-algebras over $k$.  Write $\D(\bullet)$ for the derived category of right dg modules over $\bullet$.  We begin with a folklore construction.  Let $F^\bdot$ be an $A^\bdot-B^\bdot$ bimodule defining a dg functor $F : D(A^\bdot) \to D(B^\bdot)$ given by $F(M^\bdot) = M^\bdot \otimes_A F^\bdot$.  Define a new dg algebra
$$ C_F = \begin{pmatrix} A^\bdot & F^\bdot \\ 0 & B^\bdot \end{pmatrix}. $$
More precisely, as a complex $C_F = A^\bdot \oplus F^\bdot \oplus B^\bdot$ and the multiplication is given by
$$ (a,f,b)(a',f',b') = (aa', af' + f b', bb').$$

By construction $C_F^\bdot$ has a pair of orthogonal idempotents $e_A=(1,0,0)$ and $e_B=(0,0,1)$. Every module splits as a complex $M^\bdot = M^\bdot_A \oplus M^\bdot_B$, where $M^\bdot_A := M^\bdot e_A$ is an $A^\bdot$ module and $M^\bdot_B := M^\bdot e_B$ is a $B^\bdot$ module. In fact the category of right $C_F^\bdot$ modules is equivalent to the category of triples consisting of $M^\bdot_A \in D(A^\bdot)$, $M^\bdot_B \in D(B^\bdot)$, and a structure homomorphism of $B^\bdot$ modules $M_A \otimes_A F^\bdot \to M^\bdot_B$, with intertwiners as morphisms. In order to abbreviate notation, we will denote the data of a module over $C^\bdot_F$ by its structure homomorphism $[F(M^\bdot_A) \to M^\bdot_B]$

Let $\cA,\cB \subset D(C^\bdot)$ be the full subcategories of modules of the form $[F(M^\bdot_A) \to 0]$ and $[F(0) \to M^\bdot_B]$ respectively. Then $\cA \simeq D(A^\bdot)$, $\cB \simeq D(B^\bdot)$, and the projection $D(C^\bdot_F) \to \cA$ (resp. $\cB$) given by $[F(M^\bdot_A) \to M^\bdot_B] \mapsto M^\bdot_A$ (resp. $M^\bdot_B$) is the left (resp. right) adjoint of the inclusion. We have semiorthogonality $\cB \perp \cA$ and a canonical short exact sequence
$$[F(0) \to M^\bdot_B] \to [F(M^\bdot_A) \to M^\bdot_B] \to [F(M^\bdot_A) \to 0] \parr $$
and therefore $\D(C_F^\bdot) = \sod{ \cA, \cB }$.

\begin{lem}\label{lemma:adj-funct-construction}
Suppose that $G^\bdot$ is a $B^\bdot-A^\bdot$ bimodule such that $\tensor G^\bdot$ is right adjoint to $\tensor F^\bdot$. Then there is an equivalence $\Phi:\D(C^\bdot_F) \to \D(C^\bdot_G)$ such that $\Phi$ restricts to the identity functor between the subcategories of $D(C^\bdot_F)$ and $D(C^\bdot_G)$ which are canonically identified with $D(A^\bdot)$.
\end{lem}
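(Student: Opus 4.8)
The plan is to exhibit $\Phi$ as a change-of-decomposition equivalence: I will produce two semiorthogonal decompositions of one and the same glued dg-category, one manifestly equal to $\D(C^\bdot_F)$ and the other manifestly equal to $\D(C^\bdot_G)$, related by a single mutation. Recall from the construction preceding the lemma that $\D(C^\bdot_F)=\sod{\cA,\cB}$, where $\cA$ is the essential image of $M^\bdot_A\mapsto[F(M^\bdot_A)\to 0]$ and $\cB$ that of $M^\bdot_B\mapsto[F(0)\to M^\bdot_B]$, so $\cA\simeq\D(A^\bdot)$ and $\cB\simeq\D(B^\bdot)$. The first point is that \emph{$\cA$ is admissible in $\D(C^\bdot_F)$}. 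Its inclusion $\iota_\cA$ always has a left adjoint, the projection $[F(M^\bdot_A)\to M^\bdot_B]\mapsto M^\bdot_A$ (indeed $\op{Hom}_{\D(C^\bdot_F)}(X,\iota_\cA M^\bdot_A)=\op{Hom}_{\D(A^\bdot)}(X_A,M^\bdot_A)$, using the canonical triangle of $X$ and $\op{Hom}(\cB,\cA)=0$); and using the hypothesis $\tensor F^\bdot\dashv\tensor G^\bdot$ one checks it also has a right adjoint, namely $\iota_\cA^R[F(P^\bdot_A)\xrightarrow{\rho}P^\bdot_B]=\op{fib}(P^\bdot_A\xrightarrow{\rho^\flat}P^\bdot_B\tensor_B G^\bdot)$, where $\rho^\flat$ is the morphism adjoint to $\rho$. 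This is the only place the hypothesis on $G^\bdot$ is used, and it is exactly what makes the forthcoming mutation available. (The subcategory $\cB$ is admissible without any hypothesis.)

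Since $\cA$ is admissible, left-mutating $\sod{\cA,\cB}$ through $\cA$ yields a second semiorthogonal decomposition $\D(C^\bdot_F)=\sod{\cB',\cA}$ with $\cB'=\lmut_\cA\cB$, together with a mutation equivalence $\mu:=\lmut_\cA|_\cB\colon\cB\xrightarrow{\simeq}\cB'$. A short computation with the defining triangle $\iota_\cA\iota_\cA^R\iota_\cB M^\bdot_B\to\iota_\cB M^\bdot_B\to\mu(M^\bdot_B)\parr$ and the formula for $\iota_\cA^R$ identifies $\mu(M^\bdot_B)$ with the $C^\bdot_F$-module $[F(M^\bdot_B\tensor_B G^\bdot)\xrightarrow{\varepsilon}M^\bdot_B]$, where $\varepsilon$ is the counit of $\tensor F^\bdot\dashv\tensor G^\bdot$. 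Now I claim that $\sod{\cB',\cA}$, together with the identifications $\D(B^\bdot)\xrightarrow{\simeq}\cB\xrightarrow{\mu}\cB'$ and $\D(A^\bdot)\xrightarrow{\simeq}\cA$, is precisely the tautological decomposition $\D(C^\bdot_G)=\sod{\cA_G,\cB_G}$ attached to $C^\bdot_G$, under which $\cA_G\simeq\D(B^\bdot)$ (images of $[G(N^\bdot_B)\to 0]$) and $\cB_G\simeq\D(A^\bdot)$ (images of $[G(0)\to N^\bdot_A]$). Since $\cB'$ is the left and $\cA$ the right factor, the only thing to verify is that the ``gluing bimodule'' is $G^\bdot$ on the nose, i.e.\ that there is a natural isomorphism
$$\op{Hom}_{\D(C^\bdot_F)}\!\bigl(\mu(M^\bdot_B),\ \iota_\cA M^\bdot_A\bigr)\ \simeq\ \op{Hom}_{\D(A^\bdot)}\!\bigl(M^\bdot_B\tensor_B G^\bdot,\ M^\bdot_A\bigr),$$
which follows by applying $\op{Hom}(-,\iota_\cA M^\bdot_A)$ to the canonical triangle $\iota_\cB M^\bdot_B\to\mu(M^\bdot_B)\to\iota_\cA(M^\bdot_B\tensor_B G^\bdot)\parr$, using $\op{Hom}(\cB,\cA)=0$ and full faithfulness of $\iota_\cA$. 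Because two pre-triangulated dg-categories glued from the same data $(\D(B^\bdot),\D(A^\bdot),\tensor G^\bdot)$ are canonically equivalent, this produces the desired equivalence $\Phi\colon\D(C^\bdot_F)\xrightarrow{\simeq}\D(C^\bdot_G)$. Finally, the subcategory $\cA\simeq\D(A^\bdot)$ is literally unchanged by the mutation and occupies the right-hand slot of both $\sod{\cB',\cA}$ and $\sod{\cA_G,\cB_G}$, so $\Phi$ sends $\iota_\cA M^\bdot_A$ to $\iota_{\cB_G}M^\bdot_A$; that is, $\Phi$ restricts to the identity of $\D(A^\bdot)$ between the two canonical copies. (Unwinding the construction, $\Phi$ carries $[F(M^\bdot_A)\xrightarrow{\phi}M^\bdot_B]$ to a $C^\bdot_G$-module with $A$-part $\op{fib}(M^\bdot_A\xrightarrow{\phi^\flat}M^\bdot_B\tensor_B G^\bdot)$ and $B$-part $M^\bdot_B$, but only the behaviour on $\cA$ and $\cB$ is needed.)

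I expect the main obstacle to be two pieces of careful bookkeeping rather than any conceptual difficulty. First, one must pin down that the bimodule produced by the mutation is the \emph{given} $G^\bdot$, and not merely ``some'' right adjoint functor; this is why it is convenient to compute the relevant $\op{Hom}$ directly and recognize $M^\bdot_B\tensor_B G^\bdot$, rather than argue abstractly. Second, one should check that the whole argument is valid at the level of pre-triangulated dg-categories: it suffices to note that left mutation along an admissible subcategory is induced by a dg-functor, and that the gluing of dg-categories along a bimodule is well-defined up to dg-equivalence (the ``folklore'' already invoked before the lemma), so that $\Phi$ is a genuine dg-equivalence and the identification on $\D(A^\bdot)$ is an honest identification of dg-functors.
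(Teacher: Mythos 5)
Your route is genuinely different from the paper's. The paper simply uses the adjunction to re-encode a $C^\bdot_F$-module $[F(M^\bdot_A)\to M^\bdot_B]$ by the adjoint structure map $M^\bdot_A\to G(M^\bdot_B)$, and then writes down $\Phi$ and $\Phi^{-1}$ explicitly on objects: $\Phi(M)_A=\op{Cone}(M^\bdot_A\to G(M^\bdot_B))[-1]$, $\Phi(M)_B=M^\bdot_B[-1]$. You instead realize $\Phi$ structurally, as the comparison between the mutated decomposition $\D(C^\bdot_F)=\sod{\lmut_\cA\cB,\cA}$ and the tautological decomposition of $\D(C^\bdot_G)$. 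Your computation of $\iota_\cA^R$ (which is where the hypothesis on $G^\bdot$ enters) and of $\lmut_\cA(\iota_\cB M^\bdot_B)\simeq[F(G(M^\bdot_B))\xrightarrow{\varepsilon}M^\bdot_B]$ are both correct, and unwinding your construction recovers essentially the paper's functor; your version also explains \emph{why} the formula looks the way it does. What the paper's approach buys is that it needs nothing beyond the adjunction: no admissibility, no mutation, and no appeal to uniqueness of gluings.

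Two caveats. First, a concrete shift error: you assert that the gluing is ``$G^\bdot$ on the nose'' because $\op{Hom}_{\D(C^\bdot_F)}(\mu(M^\bdot_B),\iota_\cA M^\bdot_A)\simeq\op{RHom}_{A}(G(M^\bdot_B),M^\bdot_A)$. That isomorphism is right, but the corresponding Hom in $\D(C^\bdot_G)$ between the canonical generators is $\op{RHom}_{A}(G(M^\bdot_B),M^\bdot_A)[-1]$ (compute it from the free module $e_BC^\bdot_G$ exactly as for $C^\bdot_F$). So the two gluings agree only after composing the identification $\D(B^\bdot)\simeq\cB'$ with a shift --- this is precisely why the paper's formula has $\Phi(M)_B=M^\bdot_B[-1]$. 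Relatedly, your unwound formula with $B$-part $M^\bdot_B$ unshifted does not literally define a $C^\bdot_G$-module: the canonical map goes $G(M^\bdot_B)[-1]\to\op{fib}(M^\bdot_A\to G(M^\bdot_B))$, not from $G(M^\bdot_B)$. Second, your final step invokes uniqueness of pre-triangulated dg-categories glued from the same bimodule data. That is true, but it is a heavier statement than the ``folklore'' actually set up before the lemma (which only constructs $C^\bdot_F$ and identifies its modules with triples), and to apply it you would need the isomorphism of gluing functors to be realized at the level of bimodules, not merely as a natural isomorphism of triangulated bifunctors. The paper's explicit $\Phi$, $\Phi^{-1}$ sidesteps both issues.
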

\begin{proof}
Note that the adjunction allows us to identify a module over $C^\bdot_F$ by a structure homomorphism $M^\bdot_A \to G(M^\bdot_B)$ rather than a homomorphism $F(M^\bdot_A) \to M^\bdot_B$. Letting $M = [M^\bdot_A \to G(M^\bdot_B)] \in D(C^\bdot_F)$, we define
$$\Phi(M)_A = \op{Cone}(M^\bdot_A \to G(M^\bdot_B))[-1] \qquad \Phi(M)_B = M_B[-1]$$
with the canonical structure homomorphism $G(M^\bdot_B) \to \Phi(M)_A$ defining an object in $D(C^\bdot_G)$. This construction is functorial.

For $N = [G(N_B) \to N_A] \in D(C^\bdot_G)$, the inverse functor assigns
$$\Phi^{-1}(N)_A = \op{Cone}(G(N^\bdot_B) \to N^\bdot_A) \qquad \Phi^{-1}(N)_B = N^\bdot_B[1]$$
with the canonical structure homomorphism $\Phi^{-1}(N)_A \to G(N^\bdot_B [1])$ defining an object of $D(C^\bdot_F)$ (again using the adjunction between $F$ and $G$).
\end{proof}

\begin{rem}
If $F^\bdot$ and $G^\bdot$ are perfect bimodules, then $\D(\bullet)$ can be replaced with $\Perf(\bullet)$ in the above lemma.
\end{rem}

Consider a functor $S:\D(A^\bdot) \to \D(B^\bdot)$, given by a $A^\blt-B^\blt$ bimodule $S^\bdot$, with right and left adjoints $R,L$ given by bimodules $R^\blt$ and $L^\blt$ respectively.  Fix morphisms $A^\blt \to S^\blt \tensor_{B^\blt} R^\blt$, etc., representing the units and co-units of the adjunctions.  Note that from these choices we can produce a bimodule $F_S^\blt$ representing $F_S$ and a quasi-isomorphism $R^\blt \to L^\blt \tensor_{A^\blt} F_S^\blt$.  

\begin{thm}\label{thm:sph_fun_are_mutations}
If $S$ is spherical then there is a pre-triangulated dg category $\cC$ which admits semiorthogonal decompositions
$$ \cC = \sod{\cA,\cB} = \sod{\cB,\cA'} = \sod{\cA',\cB'} = \sod{\cB',\cA}. $$
such that $S$ is the inclusion of $\cA$ followed by the projection onto $\cB$ with kernel $\cA'$, as in the previous theorem.
\end{thm}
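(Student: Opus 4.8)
The plan is to take $\cC := D(C_S^\blt)$ and to realise all four decompositions inside it. By the construction of the gluing $C_S^\blt$ recalled above, $\cC$ already carries the decomposition $\cC = \sod{\cA,\cB}$, with $\cA\simeq D(A^\blt)$ and $\cB\simeq D(B^\blt)$ the subcategories of modules $[S(M_A)\to 0]$ and $[S(0)\to M_B]$, and $i_\cA(M_A) = [S(M_A)\to 0]$; since morphisms in the gluing satisfy $R\op{Hom}_\cC(i_\cA M_A, i_\cB M_B)\simeq R\op{Hom}_{B^\blt}(SM_A, M_B)$, as soon as $\cB$ is known to be left admissible the functor $i_\cB^L i_\cA$ is identified with $S$, which is the recovery statement required. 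Thus the task is to produce the other three decompositions.

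First I would feed the adjunctions into Lemma~\ref{lemma:adj-funct-construction}. Applied with $F=L$, $G=S$ (so $G=F^R$, as $S$ is the right adjoint of $L$) it gives an equivalence $D(C_L^\blt)\xrightarrow{\ \simeq\ }D(C_S^\blt)=\cC$ that is the identity on the copies of $D(B^\blt)$; transporting the native decomposition of $D(C_L^\blt)$ produces $\cC=\sod{\cB,\cA'}$, and unwinding the mapping cone in the lemma identifies $\cA'$ with the ``graph'' subcategory $\{[S(M_A)\xrightarrow{\ \sim\ }M_B]\}\simeq D(A^\blt)$. In particular $\cB$ is now left admissible, justifying the previous paragraph. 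Symmetrically, the lemma applied with $F=S$, $G=R=S^R$ gives $\cC\simeq D(C_R^\blt)$, the identity on the copies of $D(A^\blt)$, hence $\cC=\sod{\cB',\cA}$ with $\cB'$ the ``cograph'' $\{[SR(M_B)\xrightarrow{\ \epsilon\ }M_B]\}\simeq D(B^\blt)$, where $\epsilon$ is the counit. These two steps use only the existence of the adjoints. For the last decomposition $\cC=\sod{\cA',\cB'}$ I would apply the lemma once more, to $F=L^L$ (the left adjoint of $L$) and $G=L$; the point is that $L^L$ exists and is a perfect bimodule because $L^L\simeq S\circ F_S$ with $F_S$ an autoequivalence (clause~(1) of Definition~\ref{def:spherical_functor}), and $L$ is indeed its right adjoint because $R\simeq F_S\circ L$ (clause~(2)). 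This yields $\cC=\sod{\cA',\cB''}$ for some $\cB''\simeq D(B^\blt)$.

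The real obstacle is to show this cycle closes, i.e.\ that $\cB''=\cB'$, so that the four decompositions are precisely $\sod{\cA,\cB}=\sod{\cB,\cA'}=\sod{\cA',\cB'}=\sod{\cB',\cA}$, related by mutations as in Theorem~\ref{thm:mut_are_sph}. Equivalently, one verifies directly that $\sod{\cA',\cB'}$ is a decomposition of $\cC$: from the standard description of morphisms in the gluing,
$$R\op{Hom}_\cC\!\big([S(M_A)\xrightarrow{\phi}M_B],[S(N_A)\xrightarrow{\gamma}N_B]\big)\simeq\op{fib}\!\Big(R\op{Hom}_{A^\blt}(M_A,N_A)\oplus R\op{Hom}_{B^\blt}(M_B,N_B)\xrightarrow{(f,g)\mapsto\gamma S(f)-g\phi}R\op{Hom}_{B^\blt}(SM_A,N_B)\Big),$$
one must check $R\op{Hom}_\cC(\cB',\cA')=0$ and, by the parallel cone computation, produce for every $X\in\cC$ the functorial triangle $X_{\cB'}\to X\to X_{\cA'}\parr$ with $X_{\cB'}\in\cB'$, $X_{\cA'}\in\cA'$. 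In these computations the summand $R\op{Hom}_{A^\blt}(R(-),-)$ is rewritten via $R\simeq F_S L$ and $L\dashv S$, the summand $R\op{Hom}_{B^\blt}(SR(-),-)$ is handled through the triangle $SR\xrightarrow{\ \epsilon\ }\id_\cB\to T_S\parr$, and the two sides are matched using the elementary isomorphism $T_S\circ S\simeq S\circ F_S[1]$ (which follows from the triangle identities, since $SRS\simeq S\oplus SF_S$); the fact that $F_S$, $F_S^{-1}$ and $T_S$ are all autoequivalences is exactly what makes the matching work, so this is the one place where existence of adjoints alone does not suffice and the two spherical conditions are used in full. Granting this, each of $\cA,\cB,\cA',\cB'$ occurs once as a left factor and once as a right factor among the four decompositions, so all four are admissible, and $S=i_\cB^L i_\cA$ by construction — so $\cC$ is the desired category.
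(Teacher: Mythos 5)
Your construction of $\cC$ and of three of the four decompositions is essentially the paper's: glue along $S$ and apply Lemma \ref{lemma:adj-funct-construction} to the adjoint pairs $(L,S)$ and $(S,R)$, transporting the native decompositions of $\D(C_L^\blt)$ and $\D(C_R^\blt)$ to get $\sod{\cB,\cA'}$ and $\sod{\cB',\cA}$, with $\cA'$ the graph subcategory and $\cB'$ the cograph of the counit. One correction: you must glue along $S[-1]$, as the paper does. With $C_S^\blt$ the morphism complex is $\op{Hom}_{\cC}(i_\cA M_A, i_\cB N_B)\simeq \op{Hom}_{B^\blt}(S M_A, N_B)[-1]$ (not what you wrote), so $i_\cB^L i_\cA\simeq S[1]$; the shift in $C_{S[-1]}^\blt$ is exactly what makes the recovered functor equal to $S$ on the nose.

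The gap is in the fourth decomposition $\sod{\cA',\cB'}$. Your semiorthogonality computation is sound: the ingredients you list ($R\simeq F_S L$, $L\dashv S$, the triangle $SR\to\id\to T_S$, and $T_S S\simeq S F_S[1]$) do yield $\op{Hom}_\cC(\cB',\cA')=0$. But semiorthogonality is the easy half. A semiorthogonal decomposition also requires, for every $X\in\cC$, a functorial triangle $X_{\cB'}\to X\to X_{\cA'}$, and this is \emph{not} a ``parallel cone computation'' to the Hom calculation: it amounts to constructing a left adjoint to $i_{\cA'}$ (the decomposition $\sod{\cB,\cA'}$ you already have only exhibits $\cA'$ as a sub-object, via the right adjoint) together with the identification ${}^\perp\cA'=\cB'$; your alternative route via $L^L\simeq S F_S$ runs into the same wall, as you concede ($\cB''=\cB'$). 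Neither is carried out, and this is where the actual content of the theorem lives. The paper closes it by a different device: using $F_S$ and the isomorphism $R'\simeq F_S L'$ it builds an explicit equivalence $X:\D(C_{L'}^\blt)\to\D(C_{R'}^\blt)$, so that $\Psi_1 X\Psi_2$ is an \emph{autoequivalence of $\cC$} carrying $\cA\mapsto\cA'$ and $\cB\mapsto\cB'$; the fourth decomposition is then the image of $\sod{\cA,\cB}$ under this autoequivalence, and both the orthogonality and the filtration triangles come for free. To salvage your direct approach you would need to actually exhibit the projection onto $\cA'$ as a quotient — for instance by first proving ${}^\perp\cA'\cap\cA=0$ (which again uses that $F_S$ is an equivalence) and then producing the missing adjoint — or simply adopt the paper's autoequivalence.
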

\begin{proof}
Let $S' = S[-1]$, $R' = R[1]$, and $L' = L[1]$ and note that the same adjunctions hold.  We will show that $\cC = \D(C^\bdot_{S'})$ admits the desired semiorthogonal decomposition.  The reason for introducing $S',R',$ and $L'$ is the following.  We observe that $S \simeq i_\cB^L \circ i_\cA$. Indeed, $\cA'$ is the full subcategory of $\D(C^\bdot_{S[-1]})$ of objects $[ S[-1](M_A^\bdot) \to M_B^\bdot ]$ where the structural morphism is an isomorphism.  So we see that for any $M_A^\bdot$ there is a triangle
$$ [S[-1](M^\bdot_A) \to S[-1](M^\bdot_A)] \to [S[-1](M^\bdot_A) \to 0 ] \to [ S'(0) \to S(M^\bdot_A) ] \parr $$
Hence including $\cA$ and projecting to $\cB$ away from $\cA'$ gives $S$.

It follows from Lemma \ref{lemma:adj-funct-construction} that there are equivalences
$$ \xymatrix{ \D(C^\bdot_{R'}) \ar@/^/[r]^{\Psi_1} & \ar@/^/[l]^{\Phi_1} \D(C^\bdot_{S'}) \ar@/^/[r]^{\Psi_2} & \ar@/^/[l]^{\Phi_2} \D(C^\bdot_{L'}) } $$
By construction, $\D(C^\bdot_{S'})$ admits a semi-orthogonal decomposition $\sod{\cA,\cB}$.  We define two more full subcategories using the above equivalences.  Let $\cA' = \Phi_2 \D(A^\bdot)$ and $\cB' = \Psi_1 \D(B^\bdot)$.  Then we have the semiorthogonal decompositions
$$ \D(C^\bdot_{S'}) = \sod{ \cB',\cA } = \sod{ \cA, \cB } = \sod{ \cB,\cA' }. $$

All that remains is to show that we have a semiorthogonal decomposition $\D(C^\bdot_{S'}) = \sod{ \cA',\cB'}$ as well.  We will produce an autoequivalence of $\D(C^\bdot_{S'})$ which carries $\cA$ to $\cA'$ and $\cB$ to $\cB'$, establishing the existence of the remaining semiorthogonal decomposition.

The equivalence $F_S$ gives rise to another equivalence, $X:\D(C^\bdot_{L'}) \to \D(C^\bdot_{R'})$. Let $P^\blt$ be a $C^\blt_{L'}$-module.  We define
$$ X(P^\blt)_A = F_S(P^\blt_A) = P^\blt_A \otimes_{A^\blt} F_S^\blt \quad \text{and} \quad X(P^\blt)_B = P^\blt_B.$$
Starting with the structural morphism $P^\blt_B \tensor_{B^\blt} L^\blt[1] \to P^\blt_A$ we produce the structural morphism
$$ R' ( P^\blt_B ) \xrightarrow{\simeq} F_S( L'( P^\blt_B)) \to F_S ( P^\blt_A).$$
This is invertible because $F_S$ is an equivalence and we have an isomorphism $F_S^{-1} R' \to L'$.  

Consider the autoequivalence $\Psi_1 X \Psi_2$ of $\D(C^\bdot_{S'})$.  We observe by a straightforward computation that 
$$\Psi_1 X \Psi_2(\cB) = \cB'.$$
Now, we compute $\Psi_1 X \Psi_2(\cA)$.  First, $\Psi_2(\cA) \subset \D(C^\bdot_{L'})$ is the full subcategory of objects isomorphic to objects of the form $[ L'( S'(M_A^\bdot) ) \to M_A^\bdot]$, where the structure morphism is the counit of adjunction.  Next we compute that 
$$ X[ L'( S'(M_A^\bdot) ) \to M_A^\bdot ] = [R'(S'(M_A^\bdot)) \to F_S(M_A^\bdot)] $$
where the structure morphism is the composition of the map $R'( S'(M_A^\bdot) ) \to F_S L'( S'(M_A^\bdot))$ with the map $F_S ( L'S'(M_A^\bdot) ) \to F_S(M_A^\bdot)$ induced by the counit morphism.  This map is just the map coming from the triangle
$$ R' S' = RS \to F_S \to \id[1] \parr $$
defining $F_S$.  Therefore, after applying $\Psi_1$ we get 
$$ [ S'(M_A^\bdot)[1] \to S'(M_A^\bdot)[1] ] $$
where the structure morphism is the identity.  This is exactly the condition defining the category $\cA' = \Phi_2(\D(A^\bdot))$.  Thus $\D(C^\bdot_{S'})$ admits the fourth semi-orthogonal decomposition 
$$ \D(C^\bdot_{S'}) = \sod{\cA',\cB'}.$$
\end{proof}

\begin{rem}
If $S^\bdot, R^\bdot,$ and $L^\bdot$ are perfect bimodules then we may replace $\D(\bullet)$ with $\Perf(\bullet)$ in the above theorem.  If $A^\bdot$ and $B^\bdot$ are smooth and proper, then all cocontinuous functors between $\Perf(A^\bdot)$ and $\Perf(B^\bdot)$ are represesented by perfect bimodules.  
\end{rem}

\begin{rem}
There is an alternate formula for the twist.  Suppose that we have 
$$ \cC = \sod{\cA,\cB} = \sod{\cB,\cA'} = \sod{\cA',\cB'} = \sod{\cB',\cA} $$
as above.  Then $ T_S = i_{\cB}^L \circ \bL_{\cA}.$
(Compare with \ref{prop:wind_mut}, where $r$ plays the role of the quotient functor $i_{\cB}^L$.)
\end{rem}


\section{Monodromy of the quantum connection and fractional grade restriction rules}

In the remainder of this paper, we will refine the above construction of autoequivalences of $\D^b(X^{ss}/G)$ from a variation of GIT quotient. We generalize the grade restriction rules of Theorem \ref{thm:derived_Kirwan_surjectivity} in order to produce additional derived autoequivalences (see Corollary \ref{cor:factoring_twists_FEC}). Our motivation is to explain additional autoequivalences predicted by homological mirror symmetry (HMS). We first review Kontsevich's observation on how HMS leads to autoequivalences, as studied in \cite{ST01,Ho05,Ir}, then we frame these predictions in the context of variation of GIT quotient. We would like to emphasize that the following discussion of mirror symmetry is not meant to introduce new ideas of the authors -- we only hope to frame existing ideas regarding HMS in the context of GIT.

For simplicity we consider a smooth projective Calabi-Yau (CY) variety $V$ of complex dimension $n$. HMS predicts the existence of a mirror CY manifold $\hat{V}$ such that $D^b(V) \simeq D^b \op{Fuk} (\hat{V}, \beta)$, where $\beta$ represents a complexified K\"{a}hler class and $\D^b \op{Fuk}(\hat{V},\beta)$ is the graded Fukaya category. The category $\D^b \op{Fuk}$ does not depend on the complex structure of $\hat{V}$. Thus if $\hat{V}$ is one fiber in a family of compact CY manifolds $\hat{V}_t$ over a base $\cM$, the monodromy representation $\pi_1(\cM) \to \pi_0(\op{Symp}^{gr}(\hat{V},\beta))$ acting by symplectic parallel transport leads to an action $\pi_1(\cM) \to \op{Aut} \D^b \op{Fuk}(\hat{V},\beta)$. Via HMS this gives an action $\pi_1(\cM) \to \op{Aut} \D^b(V)$ (see \cite{ST01} for a full discussion).

Hodge theoretic mirror symmetry predicts the existence of a normal crossings compactification $\overline{\cM}(\hat{V})$ of the moduli space of complex structures on $\hat{V}$ along with a mirror map $\overline{\cM}(\hat{V}) \to \overline{\cK}(V)$ to a compactification of the ``complexified K\"{a}hler moduli space'' of $V$. Different regions of $\cK(V)$ correspond to different birational models of $V$, but locally $\cK(V)$ looks like the open subset of $H^2(V;\bC) / 2\pi i H^2(V;\bZ)$ whose real part is a K\"{a}hler class on $V$.\footnote{Technically, the complexified K\"{a}hler moduli space is locally $\cK(V) / \op{Aut}(V)$, but this distinction is not relevant to our discussion.} Mirror symmetry predicts that the mirror map identifies the \emph{$B$-model variation of Hodge structure} $H^n(\hat{V}_t)$ over $\cM$ with the $A$-model variation of Hodge structure, which is locally given by the quantum connection on the trivial bundle $\bigoplus H^{p,p}(V) \times \cK(V) \to \cK(V)$ (See Chapter 6 of \cite{CK} for details).

Finally, one can combine Hodge theoretic mirror symmetry and HMS: Let $\gamma: S^1 \to \cK(V)$ be the image of a loop $\gamma^\prime :S^1 \to \cM(\hat{V})$ under the mirror map. Symplectic parallel transport around $\gamma^\prime$ of a Lagrangian $L \subset \hat{V}_t$ corresponds to parallel transport of its fundamental class in the $B$-model variation of Hodge structure $H^n(\hat{V}_t)$. Thus mirror symmetry predicts that the automorphism $T_\gamma \in \op{Aut}(D^b(V))$ corresponds to the the monodromy of quantum connection around $\gamma$ under the twisted Chern character $\op{ch}^{2\pi i}$ defined in \cite{Ir}.

From the above discussion, one can formulate concrete predictions in the context of geometric invariant theory without an explicit mirror construction. For now we ignore the requirement that $V$ be compact (we will revisit compact CY's in Section \ref{subsect:complete_intersections}), and we restrict our focus to a small subvariety of the K\"{a}hler moduli space in the neighborhood of a ``partial large volume limit." Assume that $V = X^{ss}_- / G$ is a GIT quotient of a smooth quasiprojective $X$ and that $X^{ss}_-/G \dashrightarrow X^{ss}_+/G$ is a balanced GIT wall crossing with a single stratum and $\omega_X|_Z$ has weight $0$, as we studied in Section \ref{sect:mutations}.

The VGIT is determined by a $1$-parameter family of $G$-ample bundles $\cL_0 + r \cL^\prime$, where $r \in (-\epsilon, \epsilon)$. In fact we consider the two parameter space
$$U := \{ \tau_0 c_1(\cL_0) + \tau^\prime c_1(\cL^\prime) | \Re(\tau_0)>0 \text{ and } \Re(\tau^\prime) / \Re(\tau_0) \in (-\epsilon,\epsilon) \}$$
This is a subspace of $H^2(X^{ss}_- / G;\bC) / 2 \pi i H^2(X^{ss}_- / G; \bZ)$ obtained by gluing $\cK(X^{ss}_- / G)$ to $\cK(X^{ss}_+ / G)$ along the boundary where $\Re(\tau^\prime) = 0$. Because we are working modulo $2 \pi i \bZ$, it is convenient to introduce the exponential coordinates $q_0 = e^{-\tau_0}$ and $q^\prime = e^{-\tau^\prime}$. In these coordinates, we consider the partial compactification $\bar{U}$ as well as the annular slice $U_{q_0}$:
\begin{equation} \label{eqn:define_large_volume_limit}
\begin{array}{l}
\bar{U} := \left\{ (q_0,q^\prime) \in \bC \times \bC^\ast \left| |q_0|<1 \text{ and } |q^\prime| \in (|q_0|^\epsilon,|q_0|^{-\epsilon}) \right. \right\} \\

U_{q_0} := \{q_0\} \times \bC^\ast \cap \bar{U}.
\end{array}
\end{equation}

In this setting, mirror symmetry predicts that the quantum connection on $U$ converges to a meromorphic connection on some neighborhood of $U_0 = \{0\} \times \bC^\ast \subset \bar{U}$ which is singular along $U_0$ as well as a hypersurface $\nabla \subset \bar{U}$. To a path in $U \setminus \nabla$ connecting a point in the region $|q^\prime|<1$ with the region $|q^\prime|>1$, there should be an equivalence $\D^b(X^{ss}_-/G) \simeq \D^b(X^{ss}_+ / G)$ coming from parallel transport in the mirror family.

Restricting to $U_{q_0}$, one expects an autoequivalence of $\D^b(X^{ss}_-/G)$ for every element of $\pi_1(U_{q_0} \setminus \nabla)$, which is freely generated by loops around the points $\nabla \cap \bar{U}_{q_0}$ and the loop around the origin. We will refer to the intersection multiplicity of $\nabla$ with the line $\{0\} \times \bC^\ast$ as the \emph{expected number of autoequivalences produced by the wall crossing}. For a generic $q_0$ very close to $0$, this represents the number of points in $\nabla \cap U_{q_0}$ which remain bounded as $q_0 \to 0$.

For the example of toric CY manifolds, the compactification of the K\"{a}hler moduli space and the hypersurface $\nabla$ have been studied extensively. In Section \ref{sect:toric}, we compute these intersection multiplicities, which will ultimately inspire the construction of new autoequivalences of $\D^b(X^{ss}_- / G)$ in section \ref{sect:fract_shifts}.

\begin{rem} [Normalization]
In the discussion above, making the replacements $a \cL_0$ and $b \cL^\prime$ for positive integers $a,b$, and reducing $\epsilon$ if necessary, does not effect the geometry of the VGIT at all, but it replaces $U$ with the covering corresponding to the map $q_0 \mapsto q_0^a$, $q^\prime \mapsto (q^\prime)^b$. The covering in the $q_0$ direction has no effect on the expected number of autoequivalences defined above, but the covering $q^\prime \mapsto (q^\prime)^b$ would multiply the expected number of autoequivalences by $b$. Fortunately, the VGIT comes with a canonical normalization: When possible we will assume that $\cL^\prime|_Z \in \D^b(Z/L)_{1}$, and in general we will choose $\cL^\prime$ which minimizes the magnitude of the weight of $\cL^\prime|_Z$ with respect to $\lambda$. Multiplying $\cL_0$ if necessary, we can define the VGIT with $\epsilon = 1$.
\end{rem}

\begin{rem}
To simplify the exposition, we have ignored the fact that $X^{ss}_- / G$ is not compact in many examples of interest. To fix this, one specifies a function $W:X^{ss}_- / G \to \bC$ whose critical locus is a compact CY $V$, and the predictions above apply to the quantum connection of $V$ on the image of $\bar{U}$ under the map $H^2(\X^{ss}_-/G) \to H^2(V)$. We will discuss how autoequivalences of $\D^b(X^{ss}_- / G)$ lead to autoequivalences of $\D^b(V)$ in Section \ref{subsect:complete_intersections}.
\end{rem}

\begin{rem}
The region $U$ connects two large volume limits $q_0,q^\prime \to 0$ and $q_0,(q^\prime)^{-1} \to 0$. It is possible to reparameterize $U$ in terms of the more traditional large volume limit coordinates around either point (\cite{CK}, Chapter 6).
\end{rem}


\subsection{The toric case: K\"{a}hler moduli space and discriminant in rank 2}
\label{sect:toric}

A Calabi-Yau (CY) toric variety can be presented as a GIT quotient for a linear action of a torus $T \to \SL(V)$ on a vector space $V$ \cite{CLS}. Write $X^*(T)$ and $X_*(T)$ for the groups of characters and cocharacters of $T$, respectively. The GIT wall and chamber decomposition on $X^*(T)_\bR = X^*(T) \tensor \bR$ can be viewed as a fan known as the GKZ fan. The toric variety defined by this fan provides a natural compactification $\overline{\cK}$ of the complexified K\"{a}hler space $X^*(T) \otimes \bC^\ast$. A codimension-one wall in $X^*(T)_\bR$, which corresponds to a balanced GIT wall crossing, determines an equivariant curve $C \simeq \bP^1$ in $\overline{K}$ connecting the two large volume limit points determined by the chambers on either side of the wall. The curve $\bar{U}_0$ corresponding to this VGIT \eqref{eqn:define_large_volume_limit} is exactly the complement of the two torus fixed points in $C$.

Such a CY toric variety arises in mirror symmetry as the total space of a toric vector bundle for which a generic section defines a compact CY complete intersection (See Section \ref{subsect:complete_intersections}, and \cite{CK} for a full discussion). In this case, the toric variety defined by the GKZ fan also provides a natural compactification of the complex moduli space of the mirror $\overline{\cM}$. Although the mirror map $\overline{\cM} \to \overline{\cK}$ is nontrivial, it is the identity on the toric fixed points (corresponding to chambers in the GKZ fan) and maps a boundary curve connecting two fixed points to itself. It follows that our analysis of the expected number of autoequivalences coming from the VGIT should be computed in $\overline{\cM}$.

The boundary of $\overline{\cM}$, corresponding to singular complex degenerations of the mirror, has several components. In addition to the toric boundary, there is a particular hypersurface called the reduced discriminantal hypersurface $\nabla$ in $M$ (see \cite{GKZ}), which we simply call the \emph{discriminant}. It is the singular locus of the GKZ hypergeometric system. For simplicity we will analyze the case when $T$ is rank $2$. We will compute the expected number of autoequivalences as the intersection number between $C$ and the normalization of the discriminant. We will confirm turns out that this intersection number is equal to the length of a full exceptional collection on the $Z/L^\prime$ appearing in the GIT wall crossing, whenever $Z/L'$ is proper (see Section \ref{sect:background}).

Let $V = \bC^m$ and $(\bC^*)^2 \cong T \subset (\bC^*)^m$ be a rank two subtorus of the standard torus acting on $V$.  We can describe $T$ by a matrix of weights,
$$ \begin{pmatrix} a_1 & a_2 & \dotsm & a_m \\ b_1 & b_2 & \dotsm & b_m \end{pmatrix}, $$
representing the embedding $(t,s) \mapsto (t^{a_1}s^{b_1},\dotsc,t^{a_m}s^{b_m})$. We assume that all columns are non-zero. The CY condition means that we have
$$ \sum_{i=1}^m{a_i} = \sum_{i=1}^m{b_i} = 0. $$
Now, up to an automorphism of $V$ we may assume that the matrix of weights has the following form
$$ \begin{pmatrix} a_i \\ b_i \end{pmatrix} = \begin{pmatrix} d^1_1 \chi_1 & \dotsm & d^1_{n_1} \chi_1 & \dotsm & d^r_1 \chi_r & \dotsm & d^r_{n_r} \chi_r \end{pmatrix} $$
where $\chi_j = \bigl( \begin{smallmatrix} \alpha_j \\ \beta_j \end{smallmatrix} \bigr)$ and $\chi_1,\dotsc \chi_r$ are ordered counterclockwise by the rays they generate in the plane. Using the fact that a wall between GIT chambers occurs when there exists a strictly semistable point, one can determine that the rays of the GKZ fan are spanned by $-\chi_j$.  The GIT chambers, the maximal cones of the GKZ fan, are the cones $\sigma_i = cone(-\chi_i,-\chi_{i+1})$, $i < r$, and $\sigma_r = ( -\chi_r, -\chi_1 )$. The discriminant admits a rational parameterization, called the Horn uniformization, $f:\bP(X_*(T)_{\bC}) \dashrightarrow \nabla$ of the following form.  Set $d^i = \sum_{j=1}^{n_i}{d^i_j}$.  For a Laurent monomial $x^\lambda \in \bC[T]$ we have
$$ f^*(x^{\lambda}) = \prod_{i,j}{ (d^i_j \chi_i)^{-d^i_j (\chi_i,\lambda)}} = d_{\lambda} \prod_i \chi_i^{-d^i(\chi_i,\lambda)}, \quad d_{\lambda} := \prod_{i,j}{ (d^i_j)^{-d^i_j(\chi_i,\lambda)} } $$ 
where we view $X^*(T)$ as a set of linear functions on $X_*(T)_{\bC}$.  It follows from the CY condition that $f^*(x^{\lambda})$ has degree zero as a rational function on $X_*(T)_{\bC}$ and that $\cM$ is proper.  Therefore, $f$ actually defines a regular map $\bP(X_*(T)_\bC) \cong \bP^1 \to \cM$. We define $C_i$ to be equivariant curve in $\overline{\cM}$ defined by the codimension one wall $\bR_{\geq 0} \cdot (-\chi_i)$.

\begin{prop}
If $-\chi_i$ is not among the $\chi_j$, then the length of $\bP(X_*(T)_\bC) \times_\cM C_i$ is $d^i$.
\end{prop}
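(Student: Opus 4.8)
The plan is to compute the degree of the map $f$ restricted to the line in $\bP(X_*(T)_\bC)$ that is ``responsible'' for the wall $\bR_{\geq 0} \cdot (-\chi_i)$. First I would identify which point of $\bP(X_*(T)_\bC)$ maps to the torus-fixed points of $C_i$: the two endpoints of $C_i$ correspond to the two maximal cones of the GKZ fan adjacent to the ray $\bR_{\geq 0}\cdot(-\chi_i)$, namely $\sigma_{i-1} = \op{cone}(-\chi_{i-1},-\chi_i)$ and $\sigma_i = \op{cone}(-\chi_i,-\chi_{i+1})$. Under the Horn uniformization $f$, a torus-fixed point of $\overline{\cM}$ is the image of a point $[\lambda_0] \in \bP(X_*(T)_\bC)$ for which the valuations of the $f^*(x^\lambda)$ pick out that cone; concretely the relevant special point is $[\lambda]$ with $(\chi_i,\lambda) = 0$, i.e. the point of $\bP^1$ cut out by the linear functional $\chi_i$. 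So the intersection multiplicity of $C_i$ with the image of $f$, pulled back to $\bP^1$, is the order of vanishing at that point of the rational function expressing $f$ in a local coordinate on $C_i$ transverse to the fixed point.

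The key computation is then the following. Choose a local coordinate: $C_i$ is the curve where a monomial $x^{\mu}$ with $(\chi_i, \mu) = 0$, $(\chi_{i+1},\mu) > 0$ (say) stays finite and nonzero while the complementary coordinate degenerates; equivalently one works in the toric chart of $\overline\cM$ for $\sigma_i$, with coordinates dual to $-\chi_i, -\chi_{i+1}$. Using the displayed formula
$$ f^*(x^{\lambda}) = d_{\lambda} \prod_i \chi_i^{-d^i(\chi_i,\lambda)}, $$
one restricts $\lambda$ to range over the ray of $\sigma_i^\vee$ corresponding to the wall (those $\lambda$ with $(\chi_{i+1},\lambda)=0$ but $(\chi_i,\lambda)\neq 0$), and reads off that, as a function of the coordinate $t$ on $\bP^1$ vanishing at the point $\{\chi_i = 0\}$, the pullback of the coordinate function on $C_i$ vanishes to order exactly $d^i = \sum_{j=1}^{n_i} d^i_j$: the factor $\chi_i^{-d^i(\chi_i,\lambda)}$ contributes the exponent $d^i$ (after normalizing $(\chi_i,\lambda)$ to $-1$), and all other factors $\chi_k^{-d^k(\chi_k,\lambda)}$ with $k\neq i$ are units near that point because $\chi_k$ does not vanish there — this is exactly where the hypothesis that $-\chi_i$ is not among the $\chi_j$ is used, since it guarantees $\chi_i$ is not proportional to any other $\chi_k$ and so the zero is isolated and uncontaminated. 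The scalar $d_\lambda$ is a nonzero constant and does not affect the order of vanishing.

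Thus $f^{-1}(C_i) \to C_i$, in a neighborhood of the relevant fixed point, is $t \mapsto t^{d^i}$ up to a unit, giving length $d^i$ for the scheme-theoretic fiber product $\bP(X_*(T)_\bC) \times_\cM C_i$. I expect the main obstacle to be bookkeeping: correctly matching up which linear functional $\chi_i$ on $X_*(T)_\bC$ cuts out which fixed point of $C_i$, and checking that near that point the other Horn factors genuinely contribute no vanishing (rather than poles that could shift the count) — in other words, verifying that the CY degree-zero condition makes $f$ a morphism $\bP^1 \to \cM$ with the claimed local form, and that no cancellation occurs between numerator and denominator monomials at the point in question. Once the local coordinate on $C_i$ transverse to the fixed point is pinned down, the rest is a direct reading of exponents from the Horn uniformization formula.
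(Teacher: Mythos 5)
Your proposal is correct and follows essentially the same route as the paper: localize at the point $p_i = \{\chi_i = 0\}$ of $\bP(X_*(T)_\bC)$, pull back the monomial generators of the ideal of $C_i$ in the chart $\bC[\sigma_i^\vee]$ via the Horn formula, and read off the order of vanishing as $d^i\cdot\min\{-(\chi_i,\lambda)\} = d^i$, with the hypothesis that $-\chi_i$ is not among the $\chi_j$ ensuring the remaining Horn factors are units at $p_i$. One small correction to your framing: $f(p_i)$ is \emph{not} a torus-fixed point of $C_i$ --- the image of $f$ avoids the fixed points entirely (as the paper remarks after the proposition), and $p_i$ lands in the open orbit $U_{i-1}\cap U_i$ of $C_i$ --- but this does not affect your computation, which only uses the vanishing order at $p_i$ of the pulled-back ideal of $C_i$.
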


\begin{proof}
$C_i$ is covered by the open sets corresponding to $\sigma_{i-1}$ and $\sigma_i$.  Let $U_i$ be the chart corresponding to $\sigma_i$.  Recall that the coordinate ring of $U_i$ is
$$\bC[ \sigma_i^\vee] = \bC\{ x^\lambda : \forall \chi \in \sigma_i, (\chi, \lambda) \geq 0 \} \subset \bC[X_*(T)]. $$
Observe that $(\chi,\lambda) \geq 0$ for all $\chi \in \sigma_i$ if and only if $(\chi_i,\lambda), (\chi_{i+1},\lambda) \leq 0$.  Next, we must compute the ideal of $C_i$ in the charts $U_i$ and $U_{i+1}$.  Since $-\chi_i$ spans the wall under consideration the ideal of $C_i$ will be
$$ I_i = \bC\{x^\lambda : (\chi_i, \lambda) < 0 \} \cap \bC[U_\bullet]. $$
Let $p_j = \{ \chi_j = 0\} \in \bP(X_*(T)_\bC)$.  Then $f(p_j) \in U_i$ if and only if for all $\lambda$ such that $(\chi_i,\lambda),(\chi_{i+1} ,\lambda) \leq 0$ we have $(\chi_j,\lambda) \leq 0$.   So if $\chi_j \neq \chi_i, \chi_{i+1}$ then $f(p_j) \notin U_i$ and $f^{-1}( U_i \cap \nabla )$ is supported on $\{p_i,p_{i+1}\}$.

Then there clearly exists a $\lambda$ such that $(\chi_{i+1},\lambda) = 0$ but $(\chi_i, \lambda) < 0$.  This means that in fact $f^{-1}(U_i \cap \nabla)$ is supported on $p_i$.  So we can compute the length of $f^{-1}(U_i \cap \nabla)$ after restricting to $\bP(X_*(T)_\bC) \setminus \{p_j\}_{j \neq i}$ where its ideal is generated by $\{ \chi_i^{-d_i(\chi_i,\lambda)} \}_{\lambda \in \sigma_i^\vee}$.  Finally, we note that
$$ \min\{ (\chi_i,\lambda) : \lambda \in \sigma_i^\vee \} = 1 $$
and therefore the length of $f^{-1}(\nabla \cap U_i )$ is $d^i$.  By an analogous argument we see that $f^{-1}(\nabla \cap U_i ) = f^{-1}(\nabla \cap U_{i-1} )$.
\end{proof}

\begin{rem}
Observe that the image of $f$ avoids the torus fixed points.  Indeed, the torus fixed point in $U_i$ lies on $C_i \setminus U_{i-1}$, but $\nabla \cap C_i \subset U_i \cap U_{i-1}$.
\end{rem}

Codimension one wall crossings are always balanced \cite{DH}, but we include the analysis of the Hilbert-Mumford numerical criterion in order to explicitly identify the $Z/L^\prime$ when we cross the wall spanned by $-\chi_i$ where $-\chi_i$ is not also a weight of $T$ acting on $V$. For any character the KN stratification is determined by data $\{ (Z_j, \lambda_j) \}_{j=0}^r$ (see Section \ref{sect:background}).

\begin{prop}
Let $\{(Z_j^R, \lambda^R_j)\}_{j=0}^l$ and $\{ (Z_j^L,\lambda^L_j) \}_{j=0}^s$ be the data of stratifications immediately to the right and left of the wall spanned by $-\chi_i$, respectively.  Then 
\begin{enumerate}
	\item $\lambda^R_0 = -\lambda^L_0$ and $(\chi_i,\lambda^\bullet_0) = 0$,
	\item $Z^R_0 = Z^L_0 = V^{\lambda_0} \setminus 0$, and
	\item $\bigcup_{j > 0}{ S^R_j } = \bigcup_{j > 0}{S^L_j} $.
\end{enumerate}
\end{prop}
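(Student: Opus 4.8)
The plan is to run the Hilbert-Mumford numerical criterion explicitly in this rank-two toric setting. Fix an invariant inner product on $X_*(T)_\bR$ and use it to identify $X^*(T)_\bR$ with $X_*(T)_\bR$. Under this identification the weights of $T$ on $V$ are the vectors $d^i_j\chi_i$, the distinct weight rays are $\bR_{\geq 0}\chi_1,\dotsc,\bR_{\geq 0}\chi_r$, and, writing $B(x)$ for the set of $i$ such that $x$ has a nonzero coordinate in the $\chi_i$-block (the coordinates with weight a positive multiple of $\chi_i$), a point $x$ is $\theta$-unstable exactly when some one-parameter subgroup $\lambda$ satisfies $(\chi_k,\lambda)\geq 0$ for all $k\in B(x)$ and $(\theta,\lambda)>0$; the Kempf-optimal destabilizer is the $\lambda$ maximizing $(\theta,\lambda)/\|\lambda\|$ subject to these constraints. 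From this, $T$ being abelian, the KN datum of a stratum is recovered as: $\lambda_j$ its common optimal destabilizer, and $Z_j$ the locus in $V^{\lambda_j}$ that is semistable for the residual rank-one torus $T/\lambda_j(\bC^\ast)$ acting on $V^{\lambda_j}$ with respect to the orthogonal projection of $\theta$ onto $\lambda_j^\perp$; the strata are ordered so that the critical value $(\theta,\lambda_j)/\|\lambda_j\|$ is nondecreasing in $j$, so $S_0$ is the shallowest stratum.

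First I would identify $S_0$ and establish (1) and (2). Let $\lambda_0$ generate the rank-one lattice $\chi_i^\perp\cap X_*(T)$. Because $-\chi_i$ is not a weight ray, $V^{\lambda_0}$ is exactly the $\chi_i$-block, so $V^{\lambda_0}\setminus 0$ is the set of nonzero points supported there. For $\theta$ in either of the two chambers meeting the wall $\bR_{\geq 0}(-\chi_i)$ and close to it --- say $\theta^R\in\sigma_i=\operatorname{cone}(-\chi_i,-\chi_{i+1})$ and $\theta^L\in\sigma_{i-1}=\operatorname{cone}(-\chi_{i-1},-\chi_i)$ --- the only constraint on a $\lambda$ destabilizing an $x\in V^{\lambda_0}\setminus 0$ is $(\chi_i,\lambda)\geq 0$, while the unconstrained maximizer of $(\theta,\lambda)$ points nearly along the infeasible ray $\bR_{\geq 0}(-\chi_i)$; so the optimum is forced onto the boundary $\chi_i^\perp$, i.e.\ onto $\pm\lambda_0$, with the sign pinned by $(\theta,\lambda)>0$. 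Thus $V^{\lambda_0}\setminus 0$ is destabilized by one of $\pm\lambda_0$ --- call it $\lambda_0^R$ for $\theta^R$, $\lambda_0^L$ for $\theta^L$ --- and its critical value, a positive multiple of the small coefficient of $-\chi_{i+1}$ (resp.\ $-\chi_{i-1}$) in $\theta$, tends to $0$ as $\theta\to$ wall, while every stratum with $\lambda\notin\chi_i^\perp$ has critical value bounded below near the wall; hence for $\theta$ near enough the wall this is $S_0$. The sign condition reads $(-\chi_{i+1},\lambda_0^R)>0$ and $(-\chi_{i-1},\lambda_0^L)>0$; using that the $\chi_j$ positively span the plane and that $-\chi_i$ is not a weight ray, the angular gaps from $\chi_i$ to $\chi_{i\pm1}$ are strictly less than $\pi$, so $\chi_{i-1}$ and $\chi_{i+1}$ lie in opposite open half-planes bounded by $\bR\chi_i$, and therefore $(-\chi_{i-1},\lambda_0)$ and $(-\chi_{i+1},\lambda_0)$ have opposite signs. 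This yields $\lambda_0^R=-\lambda_0^L$, together with $(\chi_i,\lambda_0^\bullet)=0$ by construction; that is (1). For (2): the residual linearization on $V^{\lambda_0}$ is the orthogonal projection of $\theta$ onto $\bR\chi_i$, which for $\theta$ near $-\chi_i$ from either side is a strictly negative multiple of $\chi_i$; since the residual weights on $V^{\lambda_0}$ are positive multiples of $\chi_i$, the residual semistable cone is $\bR_{\geq 0}(-\chi_i)$, which contains that projection, so all of $V^{\lambda_0}\setminus 0$ is residually semistable and $Z_0^R=V^{\lambda_0}\setminus 0=Z_0^L$.

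For (3) I would show that $\bigcup_{j>0}S_j^\bullet=V^{us}(\theta^\bullet)\setminus S_0^\bullet$ equals $\{\,x : -\chi_i\notin\operatorname{cone}\{-\chi_k : k\in B(x)\}\,\}$, a set visibly independent of which side of the wall is taken. If $x$ is $\theta^\bullet$-unstable and $i\in B(x)$, then $-\chi_i$ is an extreme ray of $\operatorname{cone}\{-\chi_k:k\in B(x)\}$ and the optimality computation above again singles out $\lambda_0$ as optimal, so $x\in S_0^\bullet$; hence membership in $\bigcup_{j>0}S_j^\bullet$ forces $i\notin B(x)$, after which $-\chi_i$ lies neither in the interior of that cone (else $\theta^\bullet$, being near $-\chi_i$, would be semistable) nor on its boundary (else $-\chi_i$ would be an extreme ray and $i\in B(x)$). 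Conversely, if $-\chi_i\notin\operatorname{cone}\{-\chi_k:k\in B(x)\}$ then $x$ is unstable for every linearization near $-\chi_i$, in particular $\theta^\bullet$, and $i\notin B(x)$; since any point of $S_0^\bullet$ has $i\in B(x)$ (its limit under $\lambda_0$ lies in $Z_0^\bullet=V^{\lambda_0}\setminus 0$, hence in the $\chi_i$-block), we get $x\notin S_0^\bullet$, so $x\in\bigcup_{j>0}S_j^\bullet$. This proves (3).

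The crux is the one optimality computation identifying $\lambda_0$ as the Kempf destabilizer; it does double duty, pinning down $S_0$ and orienting $\lambda_0^\bullet$ on the one hand, and on the other showing in (3) that $S_0$ absorbs every unstable point meeting the $\chi_i$-block. The fiddly part will be the boundary and low-dimensional degenerations of the cones $\operatorname{cone}\{-\chi_k:k\in B(x)\}$ (e.g.\ when such a cone is one-dimensional, or has $-\chi_i$ on its boundary), which is exactly where the hypothesis that $-\chi_i$ is not among the $\chi_j$ --- used also to identify $V^{\lambda_0}$ with the $\chi_i$-block --- is needed.
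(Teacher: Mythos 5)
Your argument is correct, and it lives in the same general framework as the paper's proof (an explicit Hilbert--Mumford/Kirwan--Ness analysis of the stratification for linearizations near the wall), but the execution differs in a way worth noting. The paper's sketch defines the stratification inductively: it lists the candidate cocharacters $\lambda_j$ perpendicular to the weight rays, orders them by the value of $\mu^\chi$, and peels off strata in that order; the proposition then follows because crossing the wall only flips the sign of $\lambda_i$ and moves nothing else in the ordering, so each deeper stratum is unchanged term by term. You instead prove the two halves directly: for (1)--(2) you solve the constrained optimization to show that the Kempf destabilizer of any unstable point meeting the $\chi_i$-block is forced onto the boundary ray $\pm\lambda_0$ of the feasible cone, with the sign pinned by $(\theta,\lambda_0)>0$; and for (3) you identify $\bigcup_{j>0}S_j^\bullet$ outright with $\{x : -\chi_i\notin\operatorname{cone}\{-\chi_k : k\in B(x)\}\}$, i.e.\ with the unstable locus for the wall linearization itself, which is visibly side-independent. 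Your route buys a cleaner, more self-contained proof of (3) (no need to match strata one by one or track their ordering), at the cost of having to verify the absorption statement that every unstable $x$ with $i\in B(x)$ lands in $S_0$ --- the step where you invoke ``the optimality computation above'' needs the extra observation that $\lambda_0^\theta$ actually lies in the full feasible cone $C(x)=\{\lambda : (\chi_k,\lambda)\ge 0\ \forall k\in B(x)\}$ and not just in the half-plane $(\chi_i,\lambda)\ge 0$; this follows from the supporting-hyperplane argument implicit in your ``extreme ray'' remark (if some $k\in B(x)$ had $(\chi_k,\lambda_0^\theta)<0$, then $\theta$ near $-\chi_i$ would lie in $\operatorname{cone}\{-\chi_k\}$ and $x$ would be semistable), and you correctly flag the degenerate-cone cases as the place where the hypothesis that $-\chi_i$ is not a weight ray is used. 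The paper's one-by-one matching gives slightly more (each individual deeper stratum is preserved, not just their union), but for the statement as written your approach is adequate and arguably tidier.
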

\begin{proof}[Proof sketch. (See \cite{DH} for details.)]
Let $\chi$ be a character near $-\chi_i$ (as rays),  $\|\cdot\|$ be a norm on $X_*(T)_\bR$, and $\mu^{\chi}(\lambda) = \frac{(\chi,\lambda)}{\| \lambda\| }$.  In this situation the KN stratification is defined inductively.  First, there is a primitive cocharacter $\lambda_{max}$ which maximizes $\mu^{\chi}$.  The most unstable stratum has core $Z_{max} = V^{\lambda_{max}}= 0$ and $S_{max} = \oplus_{ i, (\chi_i,\lambda_{max}) \geq 0 }  \oplus_j V_{i,j}$.  The linearization $\chi$ determines a choice of generator for the line perpendicular to $\chi_j$.  For each $j$ we let $\lambda_j$ be the primitive cocharacter satisfying (i) $(\chi_j,\lambda_j) = 0$, and (ii) $\mu^\chi(\lambda) \geq 0$.  We arrange these in decreasing order according to the value of $\mu^\chi(\lambda_\bullet)$: $\lambda_{j_1}, \dotsc, \lambda_{j_n}$.  If $V^{\lambda_{j_k}}$ is not entirely contained in $S_{< k} = S_{max} \cup \bigcup_{i < k} S_i$ then we put $Z_k = V^{\lambda_{j_k}} \setminus S_{< k}$ and $S_k = \big(\oplus_{i, (\chi_i,\lambda_{j_k})\geq 0} \oplus_j V_{i,j} \big) \setminus S_{< k}$.  Clearly then, the KN stratification only depends on the sequence of $\lambda_\blt$.  Now, as $\chi$ varies across the wall, $\lambda_{max}$ varies, but $Z_{max}$ and $S_{max}$ remain unchanged.  Furthermore, $\mu^\chi(\lambda_j)$ remains positive unless $j=i$ and moreover the ordering on $\lambda_j$ for $j \neq i$ does not change.  On the other hand $\mu^\chi(\lambda_i)$ changes sign so that $-\lambda_i$ replaces $\lambda_i$ as the cocharacter attached to the least unstable stratum.  The proposition follows.
\end{proof}

Note that $V^{\lambda^\pm_0} = \oplus_j{ V_{i,j} }$.  The action of $T$ on $V^{\lambda_0}$ factors through $\chi_i$ and the weights are simply $d^i_1,\dotsc, d^i_{n_i}$ which are all positive.  Therefore the stack $Z_0 / \bC^*$ is a weighted projective space.  Its derived category is understood thanks to the following.

\begin{thm*}[Theorem 2.12 of \cite{AKO}]
$\D^b(\bP(d^i_1,\dotsc,d^i_{n_i}))$ has a full exceptional collection of $d^i$ line bundles.  In particular $K_0(\bP(d^i_1,\dotsc,d^i_{n_i}))$ is free of rank $d^i$.
\end{thm*}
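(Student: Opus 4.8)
The plan is to exhibit the full exceptional collection $\cO,\cO(1),\dots,\cO(d^i-1)$, working with $\bP(\vec{d}):=\bP(d^i_1,\dots,d^i_{n_i})$ as the quotient stack $[(\bA^{n_i}\setminus 0)/\bC^\ast]$ for the linear $\bC^\ast$-action with weights $d^i_1,\dots,d^i_{n_i}$. Here $\cO(m)$ denotes the restriction of the weight-$m$ equivariant line bundle on $[\bA^{n_i}/\bC^\ast]$, so $x_j\in H^0(\cO(d^i_j))$, $H^0(\cO(m))=k[x_1,\dots,x_{n_i}]_m$, and $\omega_{\bP(\vec{d})}\cong\cO(-d^i)$ (just as for ordinary projective space, e.g.\ via the Euler sequence $0\to\cO\to\bigoplus_j\cO(d^i_j)\to T_{\bP(\vec{d})}\to 0$). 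These $d^i$ line bundles are exactly those lying in a grade restriction window for this very simple GIT problem: the unstable locus is the point $0$, forming a single KN stratum with $Z=\{0\}$, $\lambda=\id$ and $\eta=d^i$, so Theorem~\ref{thm:derived_Kirwan_surjectivity} already makes it plausible that $d^i$ such objects generate. I would not actually invoke derived Kirwan surjectivity in the proof, only use it as motivation for the count.

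First I would check exceptionality. Since $\op{Hom}^\bdot(\cO(j),\cO(i))\cong R\Gamma(\bP(\vec{d}),\cO(i-j))$, it suffices to show $R\Gamma(\cO(m))=0$ for $-d^i<m<0$ and $R\Gamma(\cO)\simeq k$. The needed input is the cohomology of line bundles on the weighted projective stack: $H^0(\cO(m))=k[x_1,\dots,x_{n_i}]_m$ vanishes for $m<0$; the middle cohomology $H^k(\cO(m))$ vanishes for $0<k<n_i-1$ because $k[x_1,\dots,x_{n_i}]$ is Cohen--Macaulay; and Serre duality on the smooth proper stack $\bP(\vec{d})$ gives $H^{n_i-1}(\cO(m))\cong H^0(\cO(-d^i-m))^\dual$, which vanishes for $m>-d^i$. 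Hence $R\Gamma(\cO(m))=0$ for $-d^i<m<0$, giving semiorthogonality in the order $\cO,\cO(1),\dots,\cO(d^i-1)$, while $R\Gamma(\cO)=H^0(\cO)=k$ shows each summand is exceptional.

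Next I would establish fullness via the Koszul complex of the everywhere-nonvanishing section $(x_1,\dots,x_{n_i})$ of $\bigoplus_j\cO(d^i_j)$,
$$0\to\cO(-d^i)\to\cdots\to\textstyle\bigoplus_j\cO(-d^i_j)\to\cO\to 0,$$
which is exact on $\bP(\vec{d})$ since the $x_j$ have no common zero there. In any triangulated category each term of an exact complex lies in the thick subcategory generated by the others; solving for the two outer terms and tensoring by $\cO(m)$ gives, for every $m\in\bZ$, both $\cO(m-d^i)\in\sod{\cO(m-d^i+1),\dots,\cO(m)}$ and $\cO(m)\in\sod{\cO(m-d^i),\dots,\cO(m-1)}$. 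Iterating these relations downward and upward from $\{\cO(0),\dots,\cO(d^i-1)\}$ produces $\cO(m)$ for all $m\in\bZ$. Since restriction $\D^b([\bA^{n_i}/\bC^\ast])\to\D^b(\bP(\vec{d}))$ is essentially surjective (it is localization to an open substack) and the source is generated by its equivariant line bundles of all weights, $\D^b(\bP(\vec{d}))$ is generated by $\{\cO(m)\}_{m\in\bZ}$ and hence by $\cO,\cO(1),\dots,\cO(d^i-1)$. Finally, a full exceptional collection of length $d^i$ forces $K_0(\bP(\vec{d}))\cong\bZ^{d^i}$.

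The only genuinely computational ingredient is the cohomology of line bundles used in the exceptionality step — the middle-degree vanishing, the Serre-duality identity, and $\omega_{\bP(\vec{d})}\cong\cO(-d^i)$ — so that is where I expect the real work to be, although it is entirely standard (it amounts to the fact that $k[x_1,\dots,x_{n_i}]$ has depth $n_i$, so its local cohomology is concentrated in top degree). The remaining ingredients — exactness of the Koszul complex, extracting a term of an exact complex from the others, and essential surjectivity of restriction to an open substack — are purely formal. Alternatively, one can simply cite the computation in \cite{AKO}.
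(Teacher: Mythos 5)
Your argument is correct, but note that the paper offers no proof of this statement at all --- it is quoted verbatim as Theorem 2.12 of \cite{AKO} and used as a black box. Your Beilinson-style proof (exceptionality of $\cO,\dots,\cO(d^i-1)$ from the vanishing of $R\Gamma(\cO(m))$ for $-d^i<m<0$ on the weighted projective \emph{stack}, plus fullness from the twisted Koszul complexes and generation of the equivariant affine category by line bundles) is essentially the standard argument given in the cited source, so there is nothing to reconcile.
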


In conclusion, we see that the length of a full exceptional collection on $Z_0/L_0$ associated to a wall $i$ is equal to the intersection multiplicity of $f:\bP^1 \to \nabla$ with the curve $C_i$.

\begin{rem}
In higher rank, our naive method is insufficient to handle the combinatorics of the discriminant.  However in light of the discussion following Corollary \ref{cor:factoring_twists_FEC}, we can make a precise prediction about how the discriminant intersects certain curves at infinity.  Given an action of a torus $T \to \SL(V)$ we diagonalize it so that $T \ni t \mapsto (t^{\chi_1},\dotsc,t^{\chi_n})$ where $n = \dim(V)$ and $\chi_1,\dotsc,\chi_n$ are characters of $T$.  For each $\chi \in X^*(T)$ let $|\chi| = \max\{d \in \bN : \chi/d \in X^*(T) \}$ and for each $\lambda \in X_*(T)$ define 
\[ d(\lambda) = \sum_{i : \chi_i(\lambda)=0}{|\chi_i|}. \]

Variation of GIT for $T$ acting on $V$ defines a fan $\Sigma$ in $X^*(T)_\bR$ and the Kirwan-Ness-Hesselink stratification assigns to each codimension one walls a pair $\lambda^{\pm 1} \in X_*(T)$ of one parameter subgroups.  Fix a codimension one wall $W$ and its pair $\lambda^{\pm 1}$ of one-parameter subgroups.  If the set $\{\chi_i: \chi_i(\lambda) = 0 \}$ is contained in an open half space of $X^*(T)_\bR$ then $Z^{\lambda}/L^\prime(\lambda)$ will be a weighted projective space with a full exceptional collection of $d(\lambda)$ line bundles (we drop the superscript $\pm 1$ from $\lambda^{\pm 1}$ since it does not matter here).  Now if $C$ is the  curve in $X_\Sigma$ corresponding $W \in \Sigma$ then we expect that $\ell(\nabla \cap C) = d(\lambda)$.
\end{rem}

\begin{ex}
Consider the $T = (\bC^*)^2$ action on $\bA^8$ given by
$$ (t,s) \mapsto (t,t,t,s,s,s,t^{-2},t^{-1}s^{-3}). $$
The wall and chamber decomposition of $\bR^2$ associated to this action is given in the following diagram.
\begin{figure}[h]
\centering
\begin{tikzpicture}[x=.5mm, y=.5mm, inner xsep=0pt, inner ysep=0pt, outer xsep=0pt, outer ysep=0pt]

\draw[style={gray, very thin},step=10] (-40,-40) grid (40,40);

\draw[line width=.5mm] (0,0)--(-40,0);
\draw[line width=.5mm] (0,0)--(0,-40);
\draw[line width=.5mm] (0,0)--(40,0);
\draw[line width=.5mm] (0,0)--(13.33,40);

\node at (-46,0) {$W_2$};
\node at (0,-45) {$W_1$};
\node at (13.33,45) {$W_3$};
\node at (46,0) {$W_4$};

\node at (-35,-35) {I};
\node at (-35,35) {II};
\node at (35,35) {III};
\node at (35,-35) {IV};

\end{tikzpicture}
\end{figure}

Chamber I corresponds to the total space of $\cO(-2,0) \oplus \cO(-1,-3)$ over $\bP^2 \times \bP^2$, and for this reason we will return to this example in subsection \ref{subsect:complete_intersections}.  By Horn uniformization, the discriminant is parameterized by
$$ [u:v] \mapsto \big(-4\frac{u+3v}{u}, -\frac{(u+3v)^3}{v^3}\big). $$
We will compute the intersection number at wall $W_3$.  This corresponds to the character $(-1,-3)$.  No other characters are a rational multiple of this one.  Therefore, we should get intersection number 1.  We compute the dual cones to chambers II and III, and indicate the ideal of $C_3$ in the diagram below.  The nested grey regions correspond to the monomials in the dual cones and in the ideal of $C_3$.  The vertical, horizontal, and diagonal lines divide the plane into regions corresponding to monomials where $u$,$v$, and $(u+3v)$ respectively appear with positive or negative exponents.  It is clear that only $(u+3v)$ always appears with a positive exponent.  It appears in the first coordinate with exponent 1 and therefore the intersection number $\ell(C_3 \cap \nabla)$ is one.
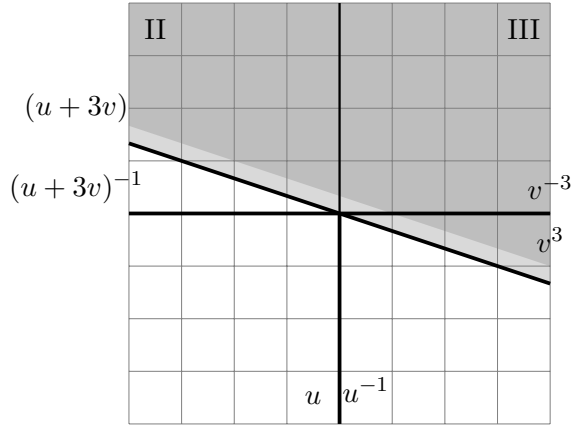
\begin{figure}[h]
\centering
\begin{tikzpicture}[x=.7mm, y=.7mm, inner xsep=0pt, inner ysep=0pt, outer xsep=0pt, outer ysep=0pt]

\draw[style={gray, very thin},step=10] (-40,-40) grid (40,40);

\fill[fill=gray, fill opacity = 0.3] (-40,13.33) -- (-40,40)-- (40,40) -- (40,-13.33) -- cycle;
\fill[fill = gray, fill opacity = 0.3] (-40,50/3) -- (40,-10) -- (40,40) -- (-40,40) -- cycle;

\node at (-35,35) {II};
\node at (35,35) {III};

\draw[line width=0.5mm] (0,-40)--(0,0);
\draw[line width=0.5mm] (-40,0)--(40,0);
\draw[line width=0.5mm] (-40,13.33) -- (40,-13.33);
\draw[line width=0.3mm] (0,0)--(0,40);

\node at (-5,-35) {$u$};
\node at (5,-33.5) {$u^{-1}$};
\node at (40,5) {$v^{-3}$};
\node at (40,-5) {$v^3$};
\node at (-50,20) {$(u+3v)$};
\node at (-50,5) {$(u+3v)^{-1}$};

\end{tikzpicture}
\caption{The dual cones to chambers II and III and the ideal defining $C_3$.}
\end{figure}
Similar analysis of walls $W_1,W_2$ and $W_4$ gives intersection numbers 3, 1, and 1, respectively.  The intersection number $\ell(C_1 \cap \nabla)$ agrees with the quantity computed from the characters. However, the intersection multiplicities $\ell(C_2 \cap \nabla)$ and $\ell(C_4 \cap \nabla)$ do not.  As we compute in Example \ref{ex:K3}, the locus $Z/L^\prime$ associated to the wall $W_2$ is $\op{tot} \cO_{\bP^2}(-2) / \bC^*$.  Note that its derived category does not admit a full exceptional collection.
\end{ex}


\subsection{Fractional grade restriction rules} \label{sect:fract_shifts}

In order to construct additional derived equivalences, we introduce \emph{fractional grade restriction rules} given a semiorthogonal decomposition $\D^b(Z/L)_w = \langle \cE_0, \cE_1\rangle$, the data of which we will be denoted $\mathit{e}$. This will be of particular interest when $\D^b(Z/L)_w$ has a full exceptional collection.

The equivalence of Lemma \ref{lem:unstable_category} gives a semiorthogonal decomposition $\cA^+_w = \langle \cE_0^+,\cE_1^+ \rangle$, where $\cE^+_i = \iota_w(\cE_i)$. We can refine the semiorthogonal decompositions \eqref{eqn:two_semi_decomps}
$$\cC^+_w = \langle \cE^+_0,\cE^+_1, \G^+_{w+1} \rangle = \langle \G^+_w, \cE^+_0,\cE^+_1 \rangle$$
Because $\cE_0^+$ and $\cE_1^+$ are left and right admissible in $\cC^+_w$ respectively, we can make the following
\begin{defn} Given the semiorthogonal decomposition $e$, we define the full subcategory $\G_e^+ = (\cE^+_1)^\perp \cap ^\perp\!(\cE^+_0) \subset \cC^+_w$. In other words, it is defined by the semiorthogonal decomposition
$$\cC^+_w = \langle \cE^+_0, \G^+_e, \cE^+_1 \rangle$$
Because $\cE_0^+$, and $\cE_1^+$ generate the kernel of the restriction functor $r_+$, it follows formally that $r_+ : \G^+_e \to \D^b(X^{ss}_+/G)$ is an equivalence of dg-categories.
\end{defn}

The mutation equivalence functor factors
$$\xymatrix{\G^+_{w+1} \ar[r]_{\lmut_{\cE^+_1}} \ar@/^/[rr]^{\lmut_{\cA^+_w}} & \G^+_e \ar[r]_{\lmut_{\cE^+_0}} & \G^+_w }$$
In order for these intermediate mutations to induce autoequivalences of $\D^b(X^{ss}_- / G)$, we must show that $\G^+_e$ is also mapped isomorphically onto $\D^b(X^{ss}_- / G)$ by restriction. We let $\kappa_\pm$ denote the equivariant line bundle $\det (N_{S^\pm} X)|_Z) = (j^\pm)^!\cO_X|_Z$.

\begin{lem} \label{lem:characterize_refined_window}
Let $F^\bdot \in \cC^+_w$.  Then $F^\bdot \in \G^+_e$ if and only if it satisfies the ``\emph{fractional grade restriction rule}":
\begin{equation}\label{eqn:fractional_grade_restriction_rule}
(\sigma^\ast F^\bdot)_w \in {^\perp}(\cE_0) \quad \text{and} \quad (\sigma^\ast F^\bdot \otimes \kappa_+)_w \in (\cE_1)^\perp
\end{equation}
\end{lem}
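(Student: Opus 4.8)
~\textbf{Proof plan.} The statement characterizes membership in $\G^+_e$ by two conditions, and since $\G^+_e$ is defined by the semiorthogonal decomposition $\cC^+_w = \langle \cE^+_0, \G^+_e, \cE^+_1\rangle$, the plan is simply to unwind what lying in $(\cE^+_1)^\perp$ and in ${}^\perp(\cE^+_0)$ means, translating $\op{Hom}$-vanishing in $\cC^+_w$ against objects of $\cE^+_i$ into $\op{Hom}$-vanishing in $\D^b(Z/L)_w$ against objects of $\cE_i$. The first step is to recall that $\cE^+_i = \iota_w(\cE_i)$ with $\iota_w : \D^b(Z/L)_w \to \cA^+_w \subset \cC^+_w$ the fully faithful functor of Lemma~\ref{lem:unstable_category}, whose left and right adjoints are computed in Lemma~\ref{lem:adjoint_unstable} to be $\iota_w^L(F^\bdot) = (\sigma^\ast F^\bdot)_w$ and $\iota_w^R(F^\bdot) = (\sigma^\ast F^\bdot \otimes \kappa_+)_w$ (here using $\kappa_+ = \det(N_{S^+}X)|_Z$ and that $\eta$ is absorbed into the notation via the balanced-wall-crossing conventions of Section~\ref{sect:mutations}).

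The core of the argument is the adjunction bookkeeping. For $F^\bdot \in \cC^+_w$ and any $G^\bdot \in \D^b(Z/L)_w$, we have
\begin{align*}
\op{Hom}_{\cC^+_w}(F^\bdot, \iota_w G^\bdot) &\simeq \op{Hom}_{Z/L}(\iota_w^L F^\bdot, G^\bdot) = \op{Hom}_{Z/L}((\sigma^\ast F^\bdot)_w, G^\bdot), \\
\op{Hom}_{\cC^+_w}(\iota_w G^\bdot, F^\bdot) &\simeq \op{Hom}_{Z/L}(G^\bdot, \iota_w^R F^\bdot) = \op{Hom}_{Z/L}(G^\bdot, (\sigma^\ast F^\bdot \otimes \kappa_+)_w).
\end{align*}
Applying the first line with $G^\bdot$ ranging over $\cE_1$ shows that $F^\bdot \in (\cE^+_1)^\perp$ (i.e. $\op{Hom}_{\cC^+_w}(F^\bdot, \cE^+_1) = 0$) if and only if $\op{Hom}_{Z/L}((\sigma^\ast F^\bdot)_w, \cE_1) = 0$, which is exactly the condition $(\sigma^\ast F^\bdot)_w \in {}^\perp(\cE_1)$. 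Wait — I should double-check the indexing against the displayed semiorthogonal decomposition $\cC^+_w = \langle \cE^+_0, \G^+_e, \cE^+_1\rangle$: being in $\G^+_e$ means being right-orthogonal to $\cE^+_0$ and left-orthogonal to $\cE^+_1$, i.e. $F^\bdot \in (\cE^+_0)^\perp \cap {}^\perp(\cE^+_1)$. Comparing with the definition $\G^+_e = (\cE^+_1)^\perp \cap {}^\perp(\cE^+_0)$ stated in the paper, the semiorthogonality $\cE^+_0 \perp \cE^+_1$ (from $\cA^+_w = \langle \cE^+_0, \cE^+_1\rangle$) forces these to agree, so one uses whichever form makes the adjunction cleanest. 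Using the second displayed line with $G^\bdot$ ranging over $\cE_0$ gives that $\op{Hom}_{\cC^+_w}(\cE^+_0, F^\bdot) = 0$ iff $\op{Hom}_{Z/L}(\cE_0, (\sigma^\ast F^\bdot \otimes \kappa_+)_w) = 0$, i.e. $(\sigma^\ast F^\bdot \otimes \kappa_+)_w \in (\cE_0)^\perp$; and the first line with $G^\bdot$ in $\cE_1$ gives $(\sigma^\ast F^\bdot)_w \in {}^\perp(\cE_1)$. Matching these against the two conditions in \eqref{eqn:fractional_grade_restriction_rule} (which read $(\sigma^\ast F^\bdot)_w \in {}^\perp(\cE_0)$ and $(\sigma^\ast F^\bdot \otimes \kappa_+)_w \in (\cE_1)^\perp$) requires care about whether the decomposition of $\cA^+_w$ is $\langle \cE_0, \cE_1\rangle$ or $\langle \cE_1, \cE_0\rangle$; I would fix conventions so that the orthogonality classes land on the correct $\cE_i$, using that $\iota_w^R$ picks out the $\cE_0$-orthogonality (the ``$w+\eta$'' end, twisted by $\kappa_+$) and $\iota_w^L$ the $\cE_1$-orthogonality.

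The one genuine subtlety — and the step I expect to require the most care rather than being purely formal — is verifying that these $\op{Hom}$-computations in $\cC^+_w$ are computed entirely by the adjoints of $\iota_w$, which presupposes that the objects of $\cE^+_i$ really are supported on $S^+$ so that Lemma~\ref{lem:adjoint_unstable} applies verbatim, and that we are entitled to test orthogonality against a set of generators of $\cE_i$ (legitimate since $\cE_i$ is a thick subcategory and $\op{Hom}$ out of / into a fixed object is exact). I would also note explicitly that the $\sigma^\ast F^\bdot$ appearing here is $\sigma^{+\ast} F^\bdot$ for the $+$-stratum, and invoke the identification $\cC^+_w = \cC^-_{w'}$ from Observation~\ref{observation:wall_crossing} only if needed to match with the ambient category; for the statement as written, working throughout on the $+$ side suffices. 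Assembling the two equivalences ``$F^\bdot \in {}^\perp(\cE^+_0)$'' $\Leftrightarrow$ first condition and ``$F^\bdot \in (\cE^+_1)^\perp$'' $\Leftrightarrow$ second condition, and intersecting, yields the claim.
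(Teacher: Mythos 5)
Your overall strategy is exactly the paper's: unwind the definition of $\G^+_e$ as the middle factor of $\cC^+_w = \sod{\cE^+_0, \G^+_e, \cE^+_1}$ and translate the two orthogonality conditions through the adjoints of $\iota_w$ from Lemma \ref{lem:adjoint_unstable}. The two adjunction isomorphisms you display are correct and are the entire content of the argument. The problem is the orthogonality bookkeeping that follows them. With the paper's convention that $\sod{\cA,\cB}$ means there are no Homs pointing left, membership in the middle factor means $\op{Hom}(F^\bdot,\cE^+_0)=0$ and $\op{Hom}(\cE^+_1,F^\bdot)=0$, i.e. $F^\bdot\in {}^\perp(\cE^+_0)\cap(\cE^+_1)^\perp$, which is precisely the paper's definition of $\G^+_e$. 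You instead parse it as $F^\bdot\in(\cE^+_0)^\perp\cap{}^\perp(\cE^+_1)$ (the slip begins where you gloss ``$F^\bdot \in (\cE^+_1)^\perp$'' as ``$\op{Hom}(F^\bdot,\cE^+_1)=0$'', conflating left and right orthogonals) and then claim the two descriptions are ``forced to agree'' by the semiorthogonality of $\cE^+_0$ and $\cE^+_1$. They are not: ${}^\perp(\cE^+_0)\cap(\cE^+_1)^\perp$ and $(\cE^+_0)^\perp\cap{}^\perp(\cE^+_1)$ are the middle factors of two different (mutated) decompositions and do not coincide in general.

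Because of this swap you derive the conditions $(\sigma^\ast F^\bdot\otimes\kappa_+)_w\in(\cE_0)^\perp$ and $(\sigma^\ast F^\bdot)_w\in{}^\perp(\cE_1)$, which is the lemma with the roles of $\cE_0$ and $\cE_1$ interchanged, and your proposed repair (``$\iota_w^R$ picks out the $\cE_0$-orthogonality and $\iota_w^L$ the $\cE_1$-orthogonality'') is also backwards. The correct chain is the one your own displayed isomorphisms give once applied to the correct conditions: $\op{Hom}(F^\bdot,\iota_w\cE_0)=0$ iff $\op{Hom}((\sigma^\ast F^\bdot)_w,\cE_0)=0$, so the \emph{left} adjoint governs the $\cE_0$ condition, while $\op{Hom}(\iota_w\cE_1,F^\bdot)=0$ iff $\op{Hom}(\cE_1,(\sigma^\ast F^\bdot\otimes\kappa_+)_w)=0$, so the \emph{right} adjoint governs the $\cE_1$ condition. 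With that correction --- together with the identity $(\sigma^\ast F^\bdot)_{w+\eta}\otimes\kappa_+=(\sigma^\ast F^\bdot\otimes\kappa_+)_w$, which you have already absorbed into your form of $\iota_w^R$ --- the proof closes and coincides with the paper's. Your remaining worries (support of $\cE^+_i$ on $S^+$, testing against generators) are non-issues, since $\cE^+_i\subset\cA^+_w$ by construction.
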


\begin{proof}
By definition $F^\bdot \in \G^+_e$ if and only if $\op{Hom}(F^\bdot, \iota_w (\cE_0)) = 0$ and $\op{Hom}(\iota_w(\cE_1),F^\bdot) = 0$. By Lemma \ref{lem:adjoint_unstable}, the left and right adjoint of $\iota_w$ can be expressed in terms of $\sigma^\ast F^\bdot$. We use that $(\sigma^\ast F^\bdot)_{w+\eta} \otimes \kappa_+ = (\sigma^\ast F^\bdot \otimes \kappa_+)_w$.
\end{proof}

One can think of $\G^+_e$ as a refined version of the usual category $\G^+_w$. Previously, we had an infinite semiorthogonal decomposition $\D^b(Z/L) = \sod{\ldots,\D^b(Z/L)_w,\D^b(Z/L)_{w+1},\ldots}$, and the grade restriction rule amounted to choosing a point at which to split this semiorthogonal decomposition, then requiring $\sigma^\ast F^\bdot$ to lie in the right factor and $\sigma^\ast F^\bdot \otimes \kappa_+$ to lie in the left factor. Lemma \ref{lem:characterize_refined_window} says the same thing but now we use the splitting
$$\D^b(Z/L) = \sod{\sod{\ldots, \D^b(Z/L)_{w-1}, \cE_0},\sod{\cE_1,\D^b(Z/L)_{w+1},\ldots}}.$$

The canonical bundle for a quotient stack $Z/L$ is $\omega_{Z/L} = \omega_Z \otimes \det \lie{l}^\dual$. \footnote{This is the same as $\omega_Z$ if $L$ is connected.} We say that \emph{Serre duality holds for $Z/L$} if the category $\D^b(Z/L)$ is Hom-finite and $\otimes \omega_{Z/L}[n]$ is a Serre functor for some $n$, i.e. $\op{Hom}^\bdot_{Z/L} (F^\bdot , G^\bdot \otimes \omega_{Z/L}[n]) \simeq \op{Hom}^\bdot_{Z/L} (G^\bdot, F^\bdot)^\dual$. Because all objects and homomorphism split into direct sums of weights spaces for $\lambda$, and $\omega_{Z/L} \in \D^b(Z/L)_0$, this is equivalent to Serre duality holding in the subcategory $\D^b(Z/L)_0 \simeq \D^b(Z/L^\prime)$. Thus whenever $Z/L^\prime$ is a compact DM stack, Serre duality holds for $Z/L$.

\begin{prop} \label{prop:refined_wall_cross}
Let $\omega_{X/G}|_Z \simeq \cO_Z$, and assume that Serre duality holds for $Z/L$, then $r_- : \G^+_e \to \D^b(X^{ss}_- / G)$ is an equivalence of dg-categories. More precisely $\G^+_e = \G^-_{e^\prime}$, where $e^\prime$ denotes the data of the semiorthogonal decomposition
$$\D^b(Z/L)_{[\lambda^- = w^\prime]} = \langle \cE_1 \otimes \omega_{Z/L} \otimes \kappa^\dual_+, \cE_0 \otimes \kappa^\dual_+ \rangle$$
\end{prop}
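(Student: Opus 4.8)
The plan is to deduce the statement from the already-established plus-side fact by identifying $\G^+_e$, inside the common category $\cC := \cC^+_w = \cC^-_{w^\prime}$ (Observation \ref{observation:wall_crossing}), with a subcategory of exactly the same type attached to the minus stratum. Concretely, writing $\cE_0^\prime := \cE_1\otimes\omega_{Z/L}\otimes\kappa_+^\dual$ and $\cE_1^\prime := \cE_0\otimes\kappa_+^\dual$, I will show that $e^\prime = \sod{\cE_0^\prime,\cE_1^\prime}$ is a semiorthogonal decomposition of $\D^b(Z/L)_{[\lambda^-=w^\prime]}$ and that $\G^+_e = \G^-_{e^\prime}$. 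Granting both, the images of $\cE_0^\prime,\cE_1^\prime$ under $\iota^-_{w^\prime}$ generate $\cA^-_{w^\prime} = \ker(r_-\colon\cC\to\D^b(X^{ss}_-/G))$, and $\cC = \sod{\iota^-_{w^\prime}\cE_0^\prime,\ \G^-_{e^\prime},\ \iota^-_{w^\prime}\cE_1^\prime}$; hence, by the same formal argument that shows $r_+\colon\G^+_e\to\D^b(X^{ss}_+/G)$ is an equivalence, $r_-\colon\G^-_{e^\prime}=\G^+_e\to\D^b(X^{ss}_-/G)$ is an equivalence.

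For the first point I would use Serre duality for $Z/L$. It makes $\D^b(Z/L)_{[\lambda^+=w]}$ a category with Serre functor $\otimes\,\omega_{Z/L}[n]$, all of whose semiorthogonal factors are then admissible; since $\op{Hom}^\bdot(X,\cE_1\otimes\omega_{Z/L})\cong\op{Hom}^\bdot(\cE_1,X)^\dual$, one gets ${}^\perp(\cE_1\otimes\omega_{Z/L}) = (\cE_1)^\perp = \cE_0$ and therefore $\D^b(Z/L)_{[\lambda^+=w]} = \sod{\cE_1\otimes\omega_{Z/L},\ \cE_0}$; tensoring by the weight-$\eta$ line bundle $\kappa_+^\dual$, which identifies $\D^b(Z/L)_{[\lambda^+=w]}$ with $\D^b(Z/L)_{[\lambda^+=w+\eta]}=\D^b(Z/L)_{[\lambda^-=w^\prime]}$, produces $e^\prime$. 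I would also record the line-bundle identity $\kappa_+\otimes\kappa_-\simeq\omega_{Z/L}$: adjunction for $Z\hookrightarrow S^\pm\hookrightarrow X$ gives $\omega_{S^\pm/G}|_Z\simeq\omega_{X/G}|_Z\otimes\kappa_\pm$, while the decomposition of $T_X|_Z$ and $\lie{g}$ into $\lambda^\pm$-weight spaces gives the $K$-theory identity $T_{S^+/G}|_Z+T_{S^-/G}|_Z = T_{Z/L}+T_{X/G}|_Z$, hence $\omega_{S^+/G}|_Z\otimes\omega_{S^-/G}|_Z\simeq\omega_{Z/L}\otimes\omega_{X/G}|_Z$ on determinants; combining, $\kappa_+\otimes\kappa_-\simeq\omega_{Z/L}\otimes(\omega_{X/G}|_Z)^\dual$, which by hypothesis is $\simeq\omega_{Z/L}$.

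The core is the equality $\G^+_e=\G^-_{e^\prime}$, which I would establish by matching the fractional grade restriction rules of Lemma \ref{lem:characterize_refined_window} on the two sides. Fix $F^\bdot\in\cC$ and put $B^\bdot = (\sigma^\ast F^\bdot)_{[\lambda^+=w]}$, $T^\bdot = (\sigma^\ast F^\bdot)_{[\lambda^+=w+\eta]}$. Since weight $w^\prime$ for $\lambda^-$ is weight $w+\eta$ for $\lambda^+$, and $\kappa_+$, $\kappa_-$ have $\lambda^+$-weights $-\eta$, $+\eta$, the minus-side rule $(\sigma^\ast F^\bdot)_{w^\prime}\in{}^\perp(\cE_0^\prime)$, $(\sigma^\ast F^\bdot\otimes\kappa_-)_{w^\prime}\in(\cE_1^\prime)^\perp$ (taken with respect to $\lambda^-$) becomes $T^\bdot\in{}^\perp(\cE_1\otimes\omega_{Z/L}\otimes\kappa_+^\dual)$ and $B^\bdot\otimes\kappa_-\in(\cE_0\otimes\kappa_+^\dual)^\perp$. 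Tensoring both conditions by $\kappa_+$, substituting $\kappa_+\otimes\kappa_-\simeq\omega_{Z/L}$, and invoking Serre duality to rewrite ${}^\perp(\cE_1\otimes\omega_{Z/L}) = (\cE_1)^\perp$ and $\{X: \op{Hom}^\bdot(\cE_0, X\otimes\omega_{Z/L})=0\}={}^\perp(\cE_0)$, I expect these to collapse to $T^\bdot\otimes\kappa_+ = (\sigma^\ast F^\bdot\otimes\kappa_+)_{[\lambda^+=w]}\in(\cE_1)^\perp$ and $B^\bdot\in{}^\perp(\cE_0)$, i.e., to the two conditions of Lemma \ref{lem:characterize_refined_window} defining $\G^+_e$. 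Note that the comparison swaps the two conditions — the left-perpendicularity defining the minus window matches the right-perpendicularity defining the plus window, and conversely — and this swap is precisely the content of Serre duality, which is why one twists by $\omega_{Z/L}$ in the definition of $e^\prime$.

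The main obstacle is this last comparison. It is elementary but unforgiving bookkeeping of $\lambda^\pm$-weights and of twists by $\kappa_\pm$ and $\omega_{Z/L}$, and it genuinely depends on the two non-formal inputs above: the line-bundle identity $\kappa_+\otimes\kappa_-\simeq\omega_{Z/L}$ (so the hypothesis $\omega_{X/G}|_Z\simeq\cO_Z$ is essential, not cosmetic) and Serre duality for $Z/L$ (without which there is no reason for $\G^+_e$ to be cut out by any grade restriction rule for the minus stratum).
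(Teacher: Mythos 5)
Your proposal is correct and follows essentially the same route as the paper's proof: establish that $e'$ is the Serre-dual mutation of $e$ twisted by $\kappa_+^\dual$, prove the line bundle identity $\omega_{X/G}|_Z \simeq \kappa_+^\dual \otimes \omega_{Z/L} \otimes \kappa_-^\dual$ from the $\lambda^\pm$-weight decomposition of $\Omega^1_X|_Z$ and $\lie{g}$, and then match the two fractional grade restriction rules of Lemma \ref{lem:characterize_refined_window} via Serre duality and the weight conversion $(\bullet)_{[\lambda^+=w]}=(\bullet)_{[\lambda^-=w'+\eta]}$. The weight and twist bookkeeping you carry out checks out and agrees with the paper's.
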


\begin{proof}
First note that $\mathit{e}^\prime$ is actually a semiorthogonal decomposition by Serre duality: it is the left mutation of $e$ tensored with $\kappa_+^\dual$.

Applying Serre duality to the characterizaton of $\G^+_e$ in Lemma \ref{lem:characterize_refined_window}, and using the fact that $(\bullet)_{[\lambda^+=w]} = (\bullet)_{[\lambda^-=w^\prime + \eta]}$, it follows that $F^\bdot \in \G^+_e$ if and only if
$$(\sigma^\ast F^\bdot \otimes \kappa_- )_{[\lambda^-=w^\prime]} \in (\cE_0 \otimes \omega_{Z/L}^\dual \otimes \kappa_-)^\perp \quad \text{and} \quad (\sigma^\ast F^\bdot)_{[\lambda^- = w^\prime]} \in {^\perp}(\cE_1 \otimes \omega_{Z/L} \otimes \kappa_+^\dual)$$
This is exactly the characterization of $\G^-_{e^\prime}$, provided that $\kappa_- \otimes \omega_{Z/L}^\dual \simeq \kappa_+^\dual$.

Consider the weight decomposition with respect to $\lambda^+$, $\Omega^1_X|_Z = (\Omega^1_X)_+ \oplus (\Omega^1_X)_{0} \oplus (\Omega^1_X)_{-}$. Then $\omega_{Z/L} \simeq \det ((\Omega^1_X)_0) \otimes \det(\lie{g}_0)^\dual$, and $\kappa_\pm^\dual \simeq \det ((\Omega^1_X)_\pm) \otimes \det (\lie{g}_{\pm})^\dual$, where $\lie{g}_\pm$ denotes the subspace of $\lie{g}$ with positive or negative weights under the adjoint action of $\lambda^+$. Hence $\omega_{X/G}|_Z \simeq \kappa_+^\dual \otimes \omega_{Z/L} \otimes \kappa_-^\dual$, so when $\omega_{X/G}|_Z \simeq \cO_Z$ we have $\kappa_+^\dual \simeq \kappa_- \otimes \omega_Z^\dual$ as needed.
\end{proof}

\begin{cor} \label{cor:multiple_refined_windows}
Let $\omega_{X/G}|_Z \simeq \cO_Z$ equivariantly, let Serre duality hold for $Z/L$, and assume we have a semiorthogonal decomposition $\D^b(Z/L)_w = \langle \cE_0, \ldots, \cE_N \rangle$. If we define $\cH^+_i$ as the mutation $\cC_w =\langle \cE_0^+, \ldots, \cH^+_i,\cE^+_i,\ldots,\cE^+_N \rangle$, then $r_- : \cH^+_i \to \D^b(X^{ss}_-/G)$ is an equivalence.
\end{cor}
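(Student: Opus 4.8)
The plan is to reduce the statement to the two-block case already established in Proposition~\ref{prop:refined_wall_cross}. Fix the index $i$ and coarsen the given semiorthogonal decomposition $\D^b(Z/L)_w = \sod{\cE_0,\ldots,\cE_N}$ by grouping the factors lying to either side of position $i$: set $\cE_{<i} := \sod{\cE_0,\ldots,\cE_{i-1}}$ and $\cE_{\geq i} := \sod{\cE_i,\ldots,\cE_N}$, so that
$$\D^b(Z/L)_w = \sod{\cE_{<i},\, \cE_{\geq i}}$$
is again a semiorthogonal decomposition, into two subcategories that are respectively left and right admissible because each is generated by admissible factors. Write $e$ for this datum.

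First I would check that the category $\cH_i^+$ appearing in the corollary is precisely the category $\G_e^+$ attached to $e$ by the definition preceding Lemma~\ref{lem:characterize_refined_window}. Transporting the coarsened decomposition through the equivalence $\iota_w : \D^b(Z/L)_w \xrightarrow{\simeq} \cA_w^+$ of Lemma~\ref{lem:unstable_category} gives $\cA_w^+ = \sod{\cE_{<i}^+,\, \cE_{\geq i}^+}$ with $\cE_{<i}^+ = \sod{\cE_0^+,\ldots,\cE_{i-1}^+}$ and $\cE_{\geq i}^+ = \sod{\cE_i^+,\ldots,\cE_N^+}$. The defining semiorthogonal decomposition $\cC_w^+ = \sod{\cE_{<i}^+,\, \G_e^+,\, \cE_{\geq i}^+}$ of $\G_e^+$ then unwinds to $\cC_w = \sod{\cE_0^+,\ldots,\cE_{i-1}^+,\, \G_e^+,\, \cE_i^+,\ldots,\cE_N^+}$; since the factors of a semiorthogonal decomposition are determined by it, this is exactly the relation defining $\cH_i^+$, so $\cH_i^+ = \G_e^+$.

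It then remains to invoke Proposition~\ref{prop:refined_wall_cross} for $e$: its two hypotheses --- that $\omega_{X/G}|_Z \simeq \cO_Z$ equivariantly and that Serre duality holds for $Z/L$ --- are exactly those assumed here, so it applies and gives that $r_- : \G_e^+ \to \D^b(X^{ss}_-/G)$ is an equivalence of dg-categories (indeed $\G_e^+ = \G_{e'}^-$ for the explicit dual datum $e'$ produced there). Together with $\cH_i^+ = \G_e^+$ this proves the corollary. The only step carrying any content is the second paragraph, namely that coarsening an $(N+1)$-term semiorthogonal decomposition into two consecutive blocks is again a semiorthogonal decomposition with the one-sided admissibility required to form $\G_e^+$, and that $\iota_w$ transports it to the corresponding coarsening on the $\cA_w^+$ side; these are standard manipulations of semiorthogonal decompositions, so no genuinely new argument beyond Proposition~\ref{prop:refined_wall_cross} is needed.
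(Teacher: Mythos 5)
Your proposal is correct and is exactly the paper's argument: the paper's proof also applies Proposition~\ref{prop:refined_wall_cross} to the two-term coarsening $\cA_0 = \sod{\cE_0,\ldots,\cE_{i-1}}$, $\cA_1 = \sod{\cE_i,\ldots,\cE_N}$. Your version simply spells out the identification $\cH_i^+ = \G_e^+$ that the paper leaves implicit.
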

\begin{proof}
Apply Proposition \ref{prop:refined_wall_cross} to the two term semiorthogonal decomposition $\langle \cA_0,\cA_1 \rangle$, where $\cA_0 = \langle \cE_0,\ldots,\cE_{i-1}\rangle$ and $\cA_1 = \langle \cE_i, \ldots, \cE_N \rangle$.
\end{proof}

As a consequence of Corollary \ref{cor:multiple_refined_windows} and the results of Section \ref{sect:mutations}, one can factor the window shift $\Phi_w$ as a composition of spherical twists, one for each semiorthogonal factor $\cE_i$. For concreteness, we narrow our focus to the situation where $\D^b(Z/L)_w$ admits a full exceptional collection $\langle E_0,\ldots,E_N\rangle$. In this case the $\cE_i^+$ of Corollary \ref{cor:multiple_refined_windows} are generated by the exceptional objects $E_i^+ := j^+_\ast (\pi^+)^\ast E_i$. The category $\cH^+_i$ is characterized by the fractional grade restriction rule
\begin{equation} \label{eqn:refined_GRR}
\begin{array}{l} \op{Hom}_{Z/L} ((\sigma^\ast F^\bdot)_{[\lambda^+ = w]}, E_j) = 0, \text{ for } j<i, \text{ and } \\ \op{Hom}_{Z/L} \left(E_j, (\sigma^\ast F^\bdot \otimes \kappa_+)_{[\lambda^+ = w]} \right) = 0, \text{ for } j\geq i \end{array}
\end{equation}

\begin{cor} \label{cor:factoring_twists_FEC}
Let $\omega_{X/G} |_Z \simeq \cO_Z$ and let $\D^b(Z/L)_w = \sod{E_0,\ldots,E_N}$ have a full exceptional collection. Then the objects $S_i := f_w(E_i) = j^+_\ast (\pi^+)^\ast E_i|_{X^{ss}_-} \in \D^b(X^{ss}_- / G)$ are spherical, and $\Phi_w = T_{S_0} \circ \cdots \circ T_{S_N}$.
\end{cor}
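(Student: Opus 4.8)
The plan is to deduce this corollary by combining the factorization machinery established in Section \ref{sect:mutations} (Theorem \ref{thm:mut_are_sph} and the refined windows of Corollary \ref{cor:multiple_refined_windows}) with the identification of window shifts as spherical twists from Proposition \ref{prop:window_shift_formula}. The first step is to set up the chain of refined grade restriction windows. By Corollary \ref{cor:multiple_refined_windows}, applied to the full exceptional collection $\D^b(Z/L)_w = \sod{E_0,\ldots,E_N}$, we obtain intermediate subcategories $\cH^+_i \subset \cC^+_w$ for $i=0,\ldots,N+1$ (with $\cH^+_0 = \G^+_{w+1}$ and $\cH^+_{N+1} = \G^+_w$, say), each of which maps isomorphically onto both $\D^b(X^{ss}_-/G)$ (via $r_-$) and $\D^b(X^{ss}_+/G)$ (via $r_+$). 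Between consecutive windows there is a single left mutation $\lmut_{\cE^+_i} : \cH^+_i \to \cH^+_{i+1}$ through the exceptional subcategory $\cE^+_i = \iota_w(E_i)$, and the total composite $\lmut_{\cE^+_N} \circ \cdots \circ \lmut_{\cE^+_0}$ equals $\lmut_{\cA^+_w}$, which by Proposition \ref{prop:wind_mut} conjugates (via $r_-$) to $\Phi_w$.

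The second step is to identify each intermediate mutation, transported to $\D^b(X^{ss}_-/G)$, as a twist by the object $S_i$. For a single exceptional factor $\cE^+_i$ generated by $E^+_i = \iota_w(E_i)$, the left mutation triangle reads
$$\op{Hom}^\bdot_{X/G}(E^+_i, F^\bdot) \otimes E^+_i \to F^\bdot \to \lmut_{\cE^+_i}(F^\bdot) \parr$$
for $F^\bdot$ in the appropriate window. Restricting this triangle to $X^{ss}_-$ and writing $S_i = E^+_i|_{X^{ss}_-} = f_w(E_i)$, one needs the canonical comparison map $\op{Hom}^\bdot_{X/G}(E^+_i, F^\bdot) \to \op{Hom}^\bdot_{X^{ss}_-/G}(S_i, F^\bdot|_{X^{ss}_-})$ to be an isomorphism; this is exactly the content of Lemma \ref{lem:local_cohomology} (the local cohomology term $R\Gamma_{S^-}$ vanishes because the relevant weight-$w'$ piece is killed by the semiorthogonality built into membership in the source window $\cH^+_i$, just as in the argument following Proposition \ref{prop:window_shift_formula} for the single-object case). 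Hence $r_- \circ \lmut_{\cE^+_i} \circ r_-^{-1}$, viewed as an endofunctor of $\D^b(X^{ss}_-/G)$ via the windows $\cH^+_i, \cH^+_{i+1}$, agrees with the twist functor $T_{S_i}$, and in particular each $S_i$ is a spherical object: one can either invoke Theorem \ref{thm:mut_are_sph} directly — the four-periodic braid on the pair of semiorthogonal decompositions $\sod{\cE^+_i, \cH^+_{i+1}} = \sod{\cH^+_i, \cE^+_i}$ of the category $\cC^+_w$ exhibits the restricted functor as spherical — or observe that $T_{S_i}$ is an autoequivalence (being a composite of the isomorphisms $r_-$, $\lmut_{\cE^+_i}$, $r_-^{-1}$) and $S_i$ is exceptional in a category where Serre duality holds, which forces sphericality.

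Finally, composing the $N+1$ triangles in order and using that the windows $\cH^+_i$ all restrict isomorphically to $\D^b(X^{ss}_-/G)$, the total window shift factors as $\Phi_w = r_- \circ \lmut_{\cA^+_w} \circ r_-^{-1} = (r_-\circ \lmut_{\cE^+_0}\circ r_-^{-1}) \circ \cdots \circ (r_- \circ \lmut_{\cE^+_N} \circ r_-^{-1}) = T_{S_0}\circ \cdots \circ T_{S_N}$, where the order of composition is dictated by the order in which the mutations are performed. The main obstacle I anticipate is bookkeeping rather than conceptual: one must be careful that at each stage the object $F^\bdot$ being mutated lies in the correct intermediate window $\cH^+_i$ so that Lemma \ref{lem:local_cohomology} applies to kill the local-cohomology correction term, and one must track precisely which adjoint ($r_-^R$ versus $r_-^L$) implements the inverse of $r_-$ on each $\cH^+_i$ — exactly the subtlety already flagged in the discussion preceding Proposition \ref{prop:wind_mut} — so that the composite really does telescope to $\lmut_{\cA^+_w}$ and the ordering of the twists comes out as stated. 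A secondary point to check is that $f_w(E_i) = j^+_\ast(\pi^+)^\ast E_i|_{X^{ss}_-}$ genuinely coincides with $E^+_i|_{X^{ss}_-} = \iota_w(E_i)|_{X^{ss}_-}$, which follows from the identity $f_w = r_- \iota_w^+$ recorded in the proof of Proposition \ref{prop:window_shift_formula}.
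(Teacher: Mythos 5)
Your overall strategy coincides with the paper's: factor $\lmut_{\cA^+_w}$ through the intermediate windows $\cH^+_i$ of Corollary \ref{cor:multiple_refined_windows}, restrict each mutation triangle to $X^{ss}_-$, and use Lemma \ref{lem:local_cohomology} to identify the restricted mutation with $T_{S_i}$ (the paper even remarks that the statement follows formally from Corollary \ref{cor:multiple_refined_windows} together with Section \ref{subsect:factorization}, which is close to your first paragraph). The one place where your argument has a genuine gap is the vanishing of the local-cohomology correction term, and the reason you give for it is not the right one. By Lemma \ref{lem:local_cohomology}, the obstruction to $\op{Hom}_{X/G}(E_i^+,F^\bdot)\to\op{Hom}_{X^{ss}_-/G}(S_i,F^\bdot|_{X^{ss}_-})$ being an isomorphism is $\op{Hom}_{Z/L}\bigl((\sigma^\ast E_i^+)_{w^\prime},(\sigma^\ast F^\bdot\otimes\kappa_-)_{w^\prime}\bigr)$, which after computing $(\sigma^\ast E_i^+)_{w^\prime}=E_i\otimes\kappa_+^\dual$ is a Hom \emph{out of} $E_i$ into a $\kappa_-\otimes\kappa_+$-twist of $\sigma^\ast F^\bdot$. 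The fractional grade restriction rule \eqref{eqn:refined_GRR} defining $\cH^+_{i+1}$ only kills Homs \emph{into} $E_j$ for $j\leq i$ and Homs out of $E_j$ for $j\geq i+1$, so "the semiorthogonality built into membership in the source window" does not apply directly to this term.

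To close the gap one must first rewrite $\kappa_-\otimes\kappa_+\simeq\omega_{Z/L}\otimes\omega_{X/G}^{-1}|_Z$, invoke the hypothesis $\omega_{X/G}|_Z\simeq\cO_Z$, and then apply Serre duality on $Z/L$ to convert the obstruction into $\op{Hom}_{Z/L}\bigl((\sigma^\ast F^\bdot)_{[\lambda^+=w]},E_i\bigr)^\dual$, which \emph{is} killed by the first half of \eqref{eqn:refined_GRR} for $F^\bdot\in\cH^+_{i+1}$. This duality step is exactly where the Calabi--Yau hypothesis of the corollary enters, and your write-up never uses it; the single-object discussion after Proposition \ref{prop:window_shift_formula} that you cite does not carry out this verification either. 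A second, smaller point: your fallback claim that $S_i$ "is exceptional in a category where Serre duality holds, which forces sphericality" is unfounded --- $S_i$ need not be exceptional in $\D^b(X^{ss}_-/G)$; the correct conclusion (and the paper's) is simply that $T_{S_i}$ is an equivalence because it is conjugate to the mutation $\lmut_{\cE_i^+}$ via the restriction equivalences of Proposition \ref{prop:refined_wall_cross}, and an object whose twist is an equivalence is spherical by definition.
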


As noted, this follows for purely formal reasons from Corollary \ref{cor:multiple_refined_windows} and the results of subsection \ref{subsect:factorization}, but for the purposes of illustration we take a more direct approach.

\begin{proof}
We use Lemma \ref{lem:local_cohomology} and the fact that $(\sigma^\ast E_i^+)_{[\lambda^-=w^\prime]} = (\sigma^\ast E^+_i)_{[\lambda^+ = w+\eta]} = E_i \otimes \kappa_+^\dual$ to compute
\begin{align*}
R\Gamma_{S^-} \inner{\op{Hom}}(E^+_i, F^\bdot) &\simeq \op{Hom}_{Z/L}(E_i, \sigma^\ast(F^\bdot)_{w^\prime+\eta} \otimes \kappa_- \otimes \kappa_+) \\
&\simeq \op{Hom}_{Z/L}(E_i, \sigma^\ast(F^\bdot)_{w^\prime+\eta} \otimes \omega_{Z/L} \otimes \omega_{X/G}^{-1})
\end{align*}
Now let $\omega_{X/G} \simeq \cO_Z$. Serre duality implies that $$\op{Hom}_{Z/L}(E_i, \sigma^\ast(F^\bdot)_{w^\prime+\eta} \otimes \omega_Z) = \op{Hom}_{Z/L} (\sigma^\ast(F^\bdot)_{w^\prime+\eta}, E_i)^\dual.$$
Thus by \eqref{eqn:refined_GRR}, the canonical map $\op{Hom}_{X/G} (E_i^+, F^\bdot) \to \op{Hom}_{X^{ss}_-/G}(S_i, F^\bdot|_{X^{ss}_-})$ is an isomorphism for $F^\bdot \in \cH^+_{i+1}$. This implies the commutative diagram
$$\xymatrix{\G^+_{w+1} \ar[r]^{L_{E^+_N}} \ar[d]^{r^-} & \cH^+_N \ar[r]^{L_{E^+_{N-1}}} \ar[d]^{r^-} & \cdots \ar[r]^{L_{E^+_1}} & \cH^+_1 \ar[r]^{L_{E^+_0}} \ar[d]^{r^-} & \G^+_w \ar[d]^{r^-} \\ \D^b(X^{ss}_- / G) \ar[r]^{T_{S_N}} & \D^b(X^{ss}_- / G) \ar[r]^{T_{S_{N-1}}} & \cdots \ar[r]^{T_{S_1}} & \D^b(X^{ss}_- / G) \ar[r]^{T_{S_0}} & \D^b(X^{ss}_- / G) }$$
Where $T_{S_i}$ is the twist functor $\op{Cone}(\op{Hom}(S_i,F^\bdot) \otimes S_i \to F^\bdot)$. By \ref{prop:refined_wall_cross}, the functors $r^-$ are equivalences, and therefore so are $T_{S_i}$.
\end{proof}

Corollary \ref{cor:factoring_twists_FEC}, suggests a natural interpretation in terms of monodromy as discussed in the beginning of this section. Let $U_{q_0}$ be the annulus \eqref{eqn:define_large_volume_limit}, with $|q_0|$ small, and let $p_0,\ldots,p_N$ be the points of $U_{q_0} \setminus \nabla$ which remain bounded as $q_0 \to 0$. Consider an ordered set of elements $[\gamma_0],\ldots,[\gamma_N]$ of $\pi_1(U_{q_0} \setminus \nabla)$ such that 
\begin{enumerate} \label{eqn:winding_condition}
\item $\gamma_i$ lie in a simply connected domain in $U_{q_0}$ containing $p_0,\ldots,p_N$, and
\item there is a permutation $\sigma_i$ such that the winding number of $\gamma_i$ around $p_j$ is $\delta_{j,\sigma_i}$.
\end{enumerate}
It is natural to guess that the monodromy representation $\pi_1(U_{q_0} \setminus \nabla) \to \op{Aut} \D^b(X^{ss}_- / G)$ predicted by mirror symmetry assigns $T_{S_i}$ to $[\gamma_i]$. In particular, it would be interesting to compare the monodromy of the quantum connection with the action of $T_{S_i}$ under the twisted Chern character.

\begin{figure}[h] \label{fig:monodromy}
\begin{center}
%
%
%
%
%
%
%
%
\includegraphics[scale=.4]{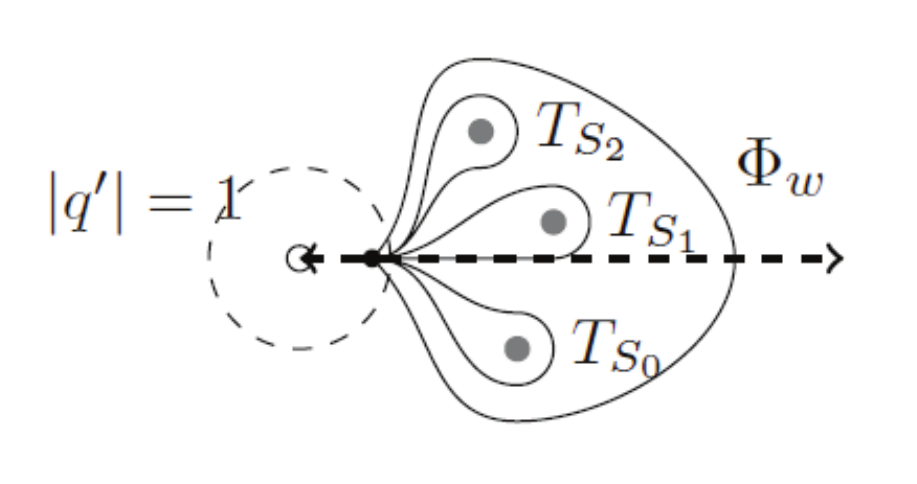}
\end{center}
\caption{Loops in $U_{q_0} \setminus \nabla$ corresponding to monodromy of the quantum connection of $X^{ss}_- / G$, giving a pictorial interpretation of Corollary \ref{cor:factoring_twists_FEC}.}
\end{figure}

Evidence for this interpretation of Corollary \ref{cor:factoring_twists_FEC} is admittedly circumstantial. In \ref{sect:toric}, we verified that the number of autoequivalences predicted by mirror symmetry is the same as the length of a full exceptional collection on $\D^b(Z/L^\prime)$ for toric flops of CY toric varieties of Picard rank 2. Letting $q_0 \to 0$, the points $p_0,\ldots,p_n$ converge to $1 \in U_0$. Horja \cite{Ho05} studied the monodromy of the quantum connection and the corresponding autoequivalences for the boundary curve $U_0$, and his work can be used to verify our interpretation of the loop corresponding to $\Phi_w$.

Furthermore, if we fix a simply connected domain $D \subset U_{q_0}$ containing $p_0,\ldots,p_N$ and let $\op{Diff}(D \setminus \{p_0,\ldots,p_N\},\partial D)$ denote the topological group of diffeomorphisms which restrict to the identity on the boundary, then $B_{N+1} \simeq \pi_0 \op{Diff}(D\setminus \{p_0,\ldots,p_N\}, \partial D)$ is a braid group which acts naturally on ordered subsets of $\pi_1(U_{q_0})$ satisfying \eqref{eqn:winding_condition}. The braid group also acts formally by left and right mutations on the set of full exceptional collections $\D^b(Z/L)_w = \sod{E_0,\ldots,E_N}$, and these two actions are compatible (See Figure \ref{fig:braiding}).

\begin{figure}[h] 
\caption{Dictionary between action of $B_3$ on loops in $D\setminus \{p_0,p_1,p_2\}$ and on full exceptional collections of $\D^b(Z/L)_w$.\label{fig:braiding}}
\begin{subfigure}[h]{.4\textwidth}
\centering
%
%
%
%
\includegraphics[scale=.4]{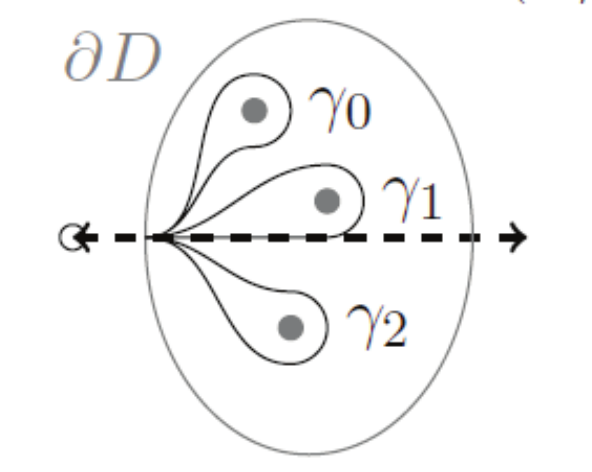}
\end{subfigure}
\begin{subfigure}[h]{.4\textwidth}
\centering
%
%
%
%
%
%
\includegraphics[scale=.4]{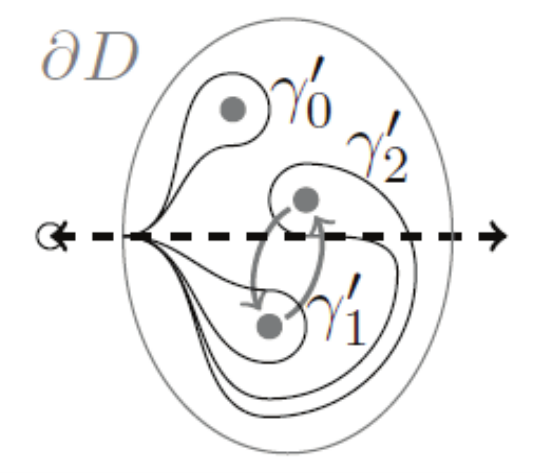}
\end{subfigure}

\subcaption{Loops $(\gamma_0,\gamma_1,\gamma_2)$ correspond to full exceptional collection $\sod{E_0,E_1,E_2}$. After acting by a generator of $B_3$, $\gamma_1^\prime = \gamma_2$. The corresponding full exceptional collection is the right mutation $\sod{E_0,E_2,R_{E_2} E_1}$. Note that $[\gamma_0 \circ \gamma_1 \circ \gamma_2] = [\gamma_0^\prime \circ \gamma_1^\prime \circ \gamma_2^\prime]$, consistent with the fact that the twists $T_{E_i}$ for any full exceptional collection compose to $\Phi_w$.}
\end{figure}


\subsection{Factoring spherical twists} \label{subsect:factorization}

The arguments used to establish fractional window shift autoequivalences extend to the general setting of Section \ref{sect:sphmut}. Suppose that $S:\cE \to \G$ is a spherical dg functor between pre-triangulated dg categories.  Assume that $\cE$ and $\G$ have generators and that $S$ and its adjoints are representable by bimodules.  Recall that since $S$ is a spherical functor, the functor 
$$ F_S = \op{Cone} \big( \op{id} \to RS \big) $$
is an equivalence.

We will now discuss a sufficient condition for a spherical twist to factor into a composition of other spherical twists.  In the following, angle brackets will be used to denote the category generated by a pair (tuple) of semiorthogonal subcategories of the ambient category as well as to assert that a given category admits a semiorthogonal decomposition.

\begin{thm}\label{thm:factorize}
Suppose that $\cE = \sod{\cA,\cB}$ and assume that the cotwist functor $F_S:\cE \to \cE$ has the property that there is a semiorthogonal decomposition
$$ \cE = \sod{ F_S(\cB), \cA }.$$
Then the restrictions $S_{\cA} = S|_{\cA}$ and $S_{\cB} = S|_{\cB}$ are spherical and 
$$ T_S = T_{S_{\cA}} \circ T_{S_{\cB}}. $$
\end{thm}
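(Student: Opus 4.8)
The plan is to upgrade the mutation description of $T_S$ from Section~\ref{sect:sphmut} into a refined one governed by $\cE=\sod{\cA,\cB}$, exactly paralleling the way Corollary~\ref{cor:factoring_twists_FEC} refines the window shift. First I would apply Theorem~\ref{thm:sph_fun_are_mutations} to embed $S$ into a pre-triangulated dg category $\cC$ with four semiorthogonal decompositions
$$\cC=\sod{\cE,\G}=\sod{\G,\cE'}=\sod{\cE',\G'}=\sod{\G',\cE},$$
so that $S\simeq i_\G^L\circ i_\cE$, $F_S\simeq\rmut_{\G'}\rmut_\G[1]$, and $T_S\simeq\lmut_{\cE'}\circ\lmut_\cE:\G\to\G'\to\G$, following the computations in the proof of Theorem~\ref{thm:mut_are_sph}. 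Let $\rho:\cC\to\cC/\cE'\simeq\G$ be the quotient functor killing $\cE'$, i.e.\ the projection of $\sod{\G,\cE'}$ onto $\G$; then $\rho\circ\lmut_{\cE'}=\rho|_{\G'}$, and $\rho$ restricts to equivalences $\G\xrightarrow{\sim}\G$ and $\G'\xrightarrow{\sim}\G$.

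Next I would refine using $\cE=\sod{\cA,\cB}$. The hypothesis $\cE=\sod{F_S\cB,\cA}$ makes $\cA$ and $\cB$ two-sidedly admissible in $\cE$, hence in $\cC$, so one gets $\cC=\sod{\cA,\cB,\G}=\sod{\G',\cA,\cB}$ together with a \emph{fractional window} $\cD$ defined by $\cC=\sod{\cA,\cD,\cB}$ (equivalently $\cD=\cB^\perp\cap{}^\perp\cA$ in $\cC$), through which the mutation factors as $\lmut_\cE=\lmut_\cA\circ\lmut_\cB$ with $\lmut_\cB:\G\to\cD$ and $\lmut_\cA:\cD\to\G'$. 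Conjugating $T_S$ by $\rho$ and using $\rho\circ\lmut_{\cE'}=\rho|_{\G'}$ then gives
$$T_S\simeq\big(\rho|_{\G'}\circ\lmut_\cA\circ(\rho|_\cD)^{-1}\big)\circ\big(\rho|_\cD\circ\lmut_\cB\circ(\rho|_\G)^{-1}\big).$$

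The heart of the argument is to show that the hypothesis forces $\cD$ to be a fractional window for the ``other side'' too: that $\cE'$ carries a semiorthogonal decomposition $\cE'=\sod{\cA',\cB'}$ — the transport of $\sod{\cA,\cB}$ along the canonical equivalence $\cE'\xrightarrow{\sim}\cE$, for which the relation $F_S\simeq\rmut_{\G'}\rmut_\G[1]$ together with $\cE=\sod{F_S\cB,\cA}$ makes the indices close up — such that $\cC=\sod{\cA,\cD,\cB}=\sod{\cA',\cD,\cB'}$. This is the abstract avatar of the identity $\G^+_e=\G^-_{e'}$ in Proposition~\ref{prop:refined_wall_cross}, whose place there is taken by Serre duality and $\omega_{X/G}|_Z\simeq\cO_Z$. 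Granting it, $\rho$ also restricts to an equivalence $\cD\xrightarrow{\sim}\G$ (since $\rho$ kills $\cE'=\sod{\cA',\cB'}$ and $\cC=\sod{\cA',\cD,\cB'}$), and Theorem~\ref{thm:mut_are_sph}, applied to the four-cycles of decompositions built from $\{\cB,\G,\cD,\G'\}$ and from $\{\cA,\cD,\cB',\G'\}$, shows that $S_\cB=S|_\cB$ and $S_\cA=S|_\cA$ are spherical and identifies $\rho|_\cD\circ\lmut_\cB\circ(\rho|_\G)^{-1}\simeq T_{S_\cB}$ and $\rho|_{\G'}\circ\lmut_\cA\circ(\rho|_\cD)^{-1}\simeq T_{S_\cA}$; plugging these into the display gives $T_S\simeq T_{S_\cA}\circ T_{S_\cB}$.

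I expect the main obstacle to be precisely this last step: extracting $\cE'=\sod{\cA',\cB'}$ with $\cC=\sod{\cA',\cD,\cB'}$ from the single input $\cE=\sod{F_S\cB,\cA}$ — equivalently, checking that the relevant braid stabilizes the refined decomposition, which when unwound is a compatibility between the cotwist $F_S$ and mutation by $\cA$ and $\cB$. A more computational route that sidesteps $\cC$ is to verify sphericity of $S_\cA,S_\cB$ by hand: applying $i_\cB^{\cE,R}(-)\,i_\cB^\cE$ to the triangle $\id_\cE\to RS\to F_S\to\id_\cE[1]$ yields $F_{S_\cB}\simeq i_\cB^{\cE,R}\circ F_S\circ i_\cB^\cE$ (and similarly for $\cA$), and $\cE=\sod{F_S\cB,\cA}$ is exactly what makes this an autoequivalence and verifies condition (2) of Definition~\ref{def:spherical_functor}; one then obtains $T_S\simeq T_{S_\cA}\circ T_{S_\cB}$ from the octahedral axiom on $S_\cB R_\cB\to SR\to\id_\G$, the hypothesis again identifying the cone of the first arrow with $S_\cA R_\cA\circ T_{S_\cB}$.
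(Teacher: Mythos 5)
Your overall strategy is the one the paper follows: embed $S$ into $\cC=\sod{\cE,\G}=\sod{\G,\cE'}=\sod{\cE',\G'}=\sod{\G',\cE}$ via Theorem~\ref{thm:sph_fun_are_mutations}, introduce the intermediate category $\cD={}^\perp\cA\cap\cB^\perp$ (the paper's $\G_e$), factor $\lmut_\cE$ through it, and apply Theorem~\ref{thm:mut_are_sph} twice. But the step you yourself flag as ``the heart of the argument'' is genuinely missing, and your guess at what needs to be proved there is not quite the statement that is actually true. The paper's resolution is: (i) transport $\cA$ and $\cB$ to $\cE'$ by \emph{two different} equivalences, setting $\cA'_L=\lmut_{\G'}(\cA)$ and $\cB'_R=\rmut_{\G}(\cB)$; then the identity $F_S\simeq\rmut_{\G'}\rmut_\G[1]$ shows that the hypothesis $\cE=\sod{F_S(\cB),\cA}$ is \emph{equivalent} to the existence of the semiorthogonal decomposition $\cE'=\sod{\cB'_R,\cA'_L}$ — note the $\cB$-piece sits on the left and the $\cA$-piece on the right, the reverse of your proposed $\sod{\cA',\cB'}$; and (ii) a mutation computation (Lemma~\ref{lem:factorization_lemma}) showing $(\cA'_L)^\perp={}^\perp\cA$ and ${}^\perp(\cB'_R)=\cB^\perp$ in $\cC$, which yields $\cC=\sod{\cB'_R,\cD,\cA'_L}=\sod{\cA,\cD,\cB}$. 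Without (i) and (ii) you have no second three-term decomposition flanking $\cD$ by pieces of $\cE'$, hence no proof that $\cD$ restricts equivalently onto $\G$ and no way to exhibit the intermediate mutations as spherical twists. Relatedly, your four-cycles are misidentified: the correct ones are $\{\cB,\G,\cB'_R,\cD\}$ inside ${}^\perp\cA$ and $\{\cA,\cD,\cA'_L,\G'\}$ inside $\cB^\perp$; your first cycle includes $\G'$, which does not lie in ${}^\perp\cA$.

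Your alternative ``computational route'' is closer to self-contained for sphericity — the identification $F_{S_\cB}\simeq i_\cB^{R}\circ F_S\circ i_\cB$ is correct, and the hypothesis does make $i_\cB^{R}|_{F_S(\cB)}:F_S(\cB)\to\cB$ an equivalence (and dually for $\cA$, where the hypothesis is what guarantees $i_\cA$ even \emph{has} a right adjoint). But condition (2) of Definition~\ref{def:spherical_functor} for $S_\cA$ and $S_\cB$, and the octahedron identifying $\op{Cone}(S_\cB R_\cB\to SR)$ with $S_\cA R_\cA T_{S_\cB}$, are asserted rather than checked, and the latter is essentially as much work as the mutation argument it is meant to replace. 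As written, the proposal is an accurate roadmap with the central junction left unbuilt.
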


By Theorem \ref{thm:sph_fun_are_mutations} there exists a dg category $\cC$ such that
$$ \cC = \sod{\cE,\G} = \sod{\G,\cE'} = \sod{\cE',\G'} = \sod{\G',\cE} $$
where $S$, the spherical functor, is the composite $i^L_\G i_\cE$. We use the two mutation equivalences $\rmut_\G, \lmut_{\G^\prime}:\cE \to \cE'$ to induce decompositions $\cE' = \sod{\cA'_R,\cB'_R} = \sod{\rmut_\G(\cA), \rmut_\G (\cB)}$ and $\cE^\prime = \sod{\cA^\prime_L, \cB^\prime_L} := \sod{\lmut_{\G^\prime}(\cA), \lmut_{\G^\prime}(\cB)}$ respectively. Then due to the identity $F_S \simeq \rmut_{\G^\prime} \rmut_\G [1]$, the hypothesis in the statement of Theorem \ref{thm:factorize} is equivalent to the existence of a semiorthogonal decomposition
\begin{equation} \label{egn:factorization_criterion}
\cE^\prime = \sod{\cB^\prime_R, \cA^\prime_L}
\end{equation}
We will need the following
\begin{lem} \label{lem:factorization_lemma}
Under the hypothesis of Theorem \ref{thm:factorize}, $(\cA^\prime_L)^\perp = {}^\perp \cA$ and ${}^\perp (\cB^\prime_R) = \cB^\perp$ as subcategories of $\cC$.
\end{lem}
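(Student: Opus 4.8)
The statement to prove is that, as subcategories of $\cC$, we have $(\cA^\prime_L)^\perp = {}^\perp\cA$ and ${}^\perp(\cB^\prime_R) = \cB^\perp$. Since these are two symmetric assertions, I would prove the first and obtain the second by the same argument applied to the ``opposite'' chain of semiorthogonal decompositions (or by duality). The plan is to work entirely inside the ambient category $\cC$ and exploit the four semiorthogonal decompositions $\cC = \sod{\cE,\G} = \sod{\G,\cE'} = \sod{\cE',\G'} = \sod{\G',\cE}$, together with the refinements of $\cE$ and $\cE'$ coming from $\cE = \sod{\cA,\cB}$ and the induced decompositions $\cE' = \sod{\cA^\prime_L,\cB^\prime_L} = \sod{\cA^\prime_R,\cB^\prime_R}$, and the factorization hypothesis \eqref{egn:factorization_criterion}: $\cE' = \sod{\cB^\prime_R,\cA^\prime_L}$.

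\textbf{Step 1: unwind $\cA^\prime_L$.} Recall $\cA^\prime_L = \lmut_{\G^\prime}(\cA)$, where $\lmut_{\G^\prime}:\cE\to\cE'$ is the left-mutation equivalence through $\G^\prime$ attached to $\cC = \sod{\cE',\G'} = \sod{\G',\cE}$. Concretely, for $a\in\cA\subset\cE$ there is a triangle $i_{\G^\prime}i_{\G^\prime}^R(a)\to a\to \lmut_{\G^\prime}(a)\parr$ exhibiting $\lmut_{\G^\prime}(a)\in\cE'$. I would use this triangle to compute $\op{Hom}_\cC(\lmut_{\G^\prime}(a), x)$ for $x\in\cC$: since $\op{Hom}_\cC(i_{\G^\prime}i_{\G^\prime}^R(a), x)$ measures the $\G^\prime$-component of $x$ and the $\G^\prime$ and $\cE$ factors are orthogonal in the relevant order, a computation with the long exact sequence should show that $x\in(\cA^\prime_L)^\perp$ inside $\cC$ iff $x$ lies in ${}^\perp\cA$ after projecting to $\cE$ in a suitable way — the point being that mutation equivalences preserve orthogonality relative to the complementary factor.

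\textbf{Step 2: identify the orthogonal in $\cC$ with ${}^\perp\cA$ in $\cC$.} Here ${}^\perp\cA$ is taken inside all of $\cC$, so it contains $\G$ (since $\G\perp\cE\supset\cA$ in $\cC=\sod{\cE,\G}$ — wait, check orientation: in $\sod{\cE,\G}$ there are no Homs from $\G$ to $\cE$, so $\G\subset\cA^\perp$; for ${}^\perp\cA$ one uses $\cC=\sod{\G',\cE}$, giving $\G'\subset{}^\perp\cE\subset{}^\perp\cA$). So ${}^\perp\cA$ is generated by $\G'$ together with the part of $\cE$ left-orthogonal to $\cA$, namely $\cB$ (from $\cE=\sod{\cA,\cB}$). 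Thus ${}^\perp\cA = \sod{\G',\cB}$ as a subcategory of $\cC$. Symmetrically one expects $(\cA^\prime_L)^\perp = \sod{\cB^\prime_R,\G}$ using the decomposition $\cC=\sod{\cE',\G}$ refined by \eqref{egn:factorization_criterion}: indeed $(\cA^\prime_L)^\perp$ in $\cC$ contains $\G$ (from $\sod{\cE',\G}$) and the part of $\cE'$ right-orthogonal... one must be careful, but the factorization hypothesis says exactly that $\cB^\prime_R$ is the complement of $\cA^\prime_L$ on the correct side inside $\cE'$. So the claim reduces to the identification $\sod{\cB^\prime_R,\G} = \sod{\G',\cB}$ as subcategories of $\cC$. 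I would verify this by checking both contain the same generators: $\cB\subset\cB^\prime_R$-side and $\G'\leftrightarrow\G$ are exchanged by the mutation equivalences $\rmut_\G$, $\lmut_{\G^\prime}$ which are part of the braid-invariance package of Theorem \ref{thm:mut_are_sph}; more precisely $\rmut_\G(\cB) = \cB^\prime_R$ by definition and the four-periodicity forces the generated subcategories to coincide.

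\textbf{Step 3: the symmetric statement and the obstacle.} The second equality ${}^\perp(\cB^\prime_R) = \cB^\perp$ follows by running Steps 1--2 with $\cA$ and $\cB$, and $\lmut$ and $\rmut$, interchanged, i.e. using the decomposition $\cE' = \sod{\cA^\prime_R,\cB^\prime_R} = \sod{\rmut_\G(\cA),\rmut_\G(\cB)}$ and the right-mutation equivalence, together again with \eqref{egn:factorization_criterion}. The main obstacle I anticipate is bookkeeping the orientations of all the semiorthogonal decompositions consistently — there are four nested decompositions of $\cC$, each refined on one side, and it is easy to get a left/right adjoint or a ${}^\perp$ versus $\perp$ backwards. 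The cleanest way to control this is to fix once and for all the convention from the proof of Theorem \ref{thm:mut_are_sph} (with inclusions $i_\bullet$ and adjoints $i_\bullet^{L},i_\bullet^{R}$), express every mutation as $i^L_\bullet i_\bullet$ or $i^R_\bullet i_\bullet$, and then the vanishing statements $i^L_\G i_{\cE'} = 0$, $i^R_{\cA'}i_\G = 0$, etc., make each reduction a one-line triangle chase. I do not expect any genuinely hard homological input beyond what is already in Theorem \ref{thm:mut_are_sph}; the content is purely the combinatorics of the periodic mutation pattern.
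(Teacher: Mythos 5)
There is a genuine gap here, and it is concentrated in your Step 2: your orthogonality conventions are applied backwards, and this leads you to a false identification of ${}^\perp\cA$. With the paper's convention ($\sod{\cX,\cY}$ means $\op{Hom}(\cY,\cX)=0$, so $\cY={}^\perp\cX$ and $\cX=\cY^\perp$), the decomposition $\cC=\sod{\cE,\G}$ gives $\G={}^\perp\cE\subset{}^\perp\cA$ (not $\G\subset\cA^\perp$, as you write), while $\cC=\sod{\G',\cE}$ gives $\G'=\cE^\perp$ (not $\G'\subset{}^\perp\cE$). The correct computation reads ${}^\perp\cA=\sod{\cB,\G}$ directly off the refinement $\cC=\sod{\cA,\cB,\G}$. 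Your formula ${}^\perp\cA=\sod{\G',\cB}$ is wrong: there is no reason for $\op{Hom}(\G',\cA)$ to vanish, and in the degenerate case $\cB=0$ your formula gives $\G'$ while the true answer is ${}^\perp\cE=\G$. Consequently the identity you reduce the lemma to, $\sod{\cB^\prime_R,\G}=\sod{\G',\cB}$, is false in general. Step 1 is also only a heuristic (``a computation \ldots should show'') and is never actually used, so even the true half of your Step 2 (that $(\cA'_L)^\perp$ is generated by $\G$ and $\cB'_R$) is not established.

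The repair is short and is essentially what the paper does: chase three-term decompositions. Start from $\cC=\sod{\cA,\cB,\G}$, mutate $\cE$ past $\G$ to get $\sod{\G,\cA^\prime_R,\cB^\prime_R}$, invoke the hypothesis \eqref{egn:factorization_criterion} to reorder the last two factors as $\sod{\G,\cB^\prime_R,\cA^\prime_L}$ --- this is the one and only place the hypothesis of Theorem \ref{thm:factorize} enters --- and then mutate $\cB^\prime_R$ back past $\G$ (inverting $\rmut_\G$) to land at $\sod{\cB,\G,\cA^\prime_L}$. Reading off the left orthogonal of the first factor of the first presentation and the right orthogonal of the last factor of the last presentation gives ${}^\perp\cA=\sod{\cB,\G}=(\cA^\prime_L)^\perp$; no separate verification that two differently presented subcategories coincide is needed, because every step is a mutation of the same ambient decomposition of $\cC$. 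The identity ${}^\perp(\cB^\prime_R)=\cB^\perp$ follows, as you anticipate, from the mirror chain starting at $\cC=\sod{\G',\cA,\cB}$.
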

\begin{proof}
We deduce that $(\cA^\prime_L)^\perp = ^\perp\!\cA$ from the following sequence of mutations
\begin{equation} \label{eqn:factorize_SOD_1} \cC = \sod{\cA,\cB, \G} = \sod{\G, \cA^\prime_R, \cB^\prime_R} = \sod{\G,\cB^\prime_R,\cA^\prime_L} = \sod{\cB, \cG, \cA^\prime_L}.
\end{equation}
where the appearance of $\cA^\prime_L$ follows from \eqref{egn:factorization_criterion}. Similarly for ${}^\perp (\cB^\prime_R) = \cB^\perp$ we consider
\begin{equation} \label{eqn:factorize_SOD_2} \cC = \sod{\G^\prime, \cA,\cB} = \sod{\cA^\prime_L, \cB^\prime_L, \G^\prime} = \sod{\cB^\prime_R,\cA^\prime_L,\G^\prime} = \sod{\cB^\prime_R, \cG^\prime, \cA}.
\end{equation}
\end{proof}

\begin{proof} [Proof of Theorem \ref{thm:factorize}]
By assumption we have the semiorthogonal decomposition \ref{egn:factorization_criterion}, which implies that $\cB^\prime_R$ is left admissible and $\cA^\prime_L$ is right admissible in $\cC$. Furthermore Lemma \ref{lem:factorization_lemma} implies that $(\cA'_L)^\perp \cap {}^\perp\! \cB^\prime_R = {}^\perp\!\cA \cap \cB^\perp$, and we call this category $\cG_e$. Thus we have semiorthogonal decompositions
$$ \cC = \sod{\cB^\prime_R, \G_e, \cA'_L} = \sod{\cA, \G_e, \cB}. $$
In particular we have a semiorthogonal decomposition $\cB^\perp = ^\perp\!\cB^\prime_R = \sod{\G_e,\cA'_L}$

Combining this with the semiorthogonal decompositions \eqref{eqn:factorize_SOD_1} and \eqref{eqn:factorize_SOD_2} we obtain
\begin{gather*}
{}^\perp \cA = \sod{\cB, \cG} = \sod{\cG_e,\cB} = \sod{\cB_R^\prime, \cG_e} = \sod{\cG, \cB_R^\prime} , \text{ and}\\
\cB^\perp = \sod{\cA,\G_e} = \sod{\G^\prime,\cA} = \sod{\cA^\prime_L,\G^\prime} = \sod{\cG_e,\cA^\prime_L}.
\end{gather*}
An analogous analysis of $^\perp\!\cA = \sod{\cB,\G}$ gives the sequence of semiorthogonal decompositions.

Thus Theorem \ref{thm:mut_are_sph} implies that the functors $S_B := i^L_\cG i_\cB : \cB \to \cG$ and $\tilde{S}_\cA := i^L_{\cG_e} i_\cA : \cA \to \cG_e$ are spherical. Note that the left adjoints $i_\cG^L$ to the inclusions $i_\cG : \cG \to \sod{\cB,\cG}$ and to $i_{\cG_e} : \cC_{\cG_e} \to \sod{\cA,\cG_e}$ are the restrictions of the corresponding adjoints for the inclusions into $\cC$, so there is no ambiguity in writing $i^L_\cG$ and $i^L_{\cG_e}$ without further specification.

\begin{equation} \label{eqn:factorize_twist_diagram}
\xymatrix{ 
\cG \ar@/^/[rr]^{ \lmut_\cB = i^L_{\cG_e} i_{\cG} } \ar@{}[rr]|{ \circlearrowright T_{S_\cB} } & & \cG_e \ar@/^/[rr]^{\lmut_\cA = i^L_{\cG^\prime} i_{\cG_e}} \ar@/^/[ll]^{\lmut_{\cB_R^\prime} = i^L_\cG i_{\cG_e}} \ar@{}[rr]|{\circlearrowright T_{\tilde{S}_\cA}} & & \cG^\prime \ar@/^/[ll]^{\lmut_{\cA_L^\prime} = i^L_{\cG_e} i_{\cG'}} }
\end{equation}

Let $\phi:\G_e \to \G$ denote the isomorphism $i^L_{\cG} i_{\cG_e} = \lmut_{\cB_R^\prime}$ whose inverse is $\phi^{-1} = i^R_{\cG_e} i_{\cG}$. One checks that $S_\cA := i_\cG^L i_\cA$ is equivalent to $\phi \circ \tilde{S}_\cA : \cA \to \cG$ and is thus spherical, and $T_{S_\cA} \simeq \phi \circ T_{\tilde{S}_\cA} \circ \phi^{-1}$. Following the various isomorphisms in the diagram \eqref{eqn:factorize_twist_diagram} shows that
$$T_{S_\cA} \circ T_{S_\cB} = \phi \circ T_{\tilde{S}_\cA} \circ \phi^{-1} \circ T_{S_\cB} = \lmut_{\cB_R^\prime} \lmut_{\cA_L^\prime} \lmut_\cA \lmut_\cB = T_{S}$$
\end{proof}

\begin{rem}
Consider a relatively Calabi-Yau manifold $\pi:X \to \bP^d$.  The main result of \cite[Sec. 3]{AHK} is that, under certain assumptions on $\cO_X$ (namely that it is a spherical object on $X$ and EZ spherical with respect to $S=\pi$), the spherical twist associated to the functor $\pi^*$ factors into spherical twists around the objects $\pi^*\cO,\dotsc,\pi^*\cO(d)$.  The hypothesis that $\cO_X$ is EZ spherical implies that the cotwist $F_S = -\tensor \cO(-d-1)[d-n]$, which is the Serre functor up to a shift.  Hence the factorization of $T_S$ (called $\mathbf{H}$ in \cite{AHK}) is a consequence of Theorem \ref{thm:factorize}.
\end{rem}

\begin{ex}
Let $X$ be a smooth projective variety, and $j : Y \hookrightarrow X$ a smooth divisor. Then the restriction functor $S = j^\ast : \D^b(X) \to \D^b(Y)$ has a right adjoint $R = j_\ast$ and a left adjoint $L = j_\ast( \bullet \otimes \cO_Y(Y)[-1])$. The cotwist $F_S = \op{Cone}(\id \to j_\ast j^\ast) \simeq \bullet \otimes \cO_X(-Y)[1]$ is an equivalence, and $F_S L \simeq R$ by the projection formula. The corresponding spherical twist autoequivalence of $\D^b(Y)$ is
$$T_S (F^\bdot) = \op{Cone}(j^\ast(j_\ast F^\bdot)) \otimes \cO_Y(Y)[-1] \to F^\bdot) \simeq F^\bdot \otimes \cO_Y(Y)$$
In the special case where $Y$ is an anticanonical divisor, so that $F_S \simeq \bullet \otimes \omega_X[1]$. Then for any semiorthogonal decomposition $\D^b(X) = \sod{\cA,\cB}$ we have $\D^b(X) = \sod{F_S(\cB), \cA}$ by Serre duality, so Theorem \ref{thm:factorize} applies.
\end{ex}

\begin{ex}
An example studied in \cite{A} is that of a hypersurface $j: Y \hookrightarrow X$ where $\pi : Y \simeq \bP(E) \to M$ is a projective bundle of rank $r \geq 1$ over a smooth projective variety $M$. Then $j_\ast \pi^\ast : \D^b(M) \to \D^b(X)$ is spherical iff $\cO_Y(Y) \simeq \pi^\ast L \otimes \cO_\pi (-r)$. In this case the cotwist is tensoring by a shift of $L$, so if $L \simeq \omega_M$, then Theorem \ref{thm:factorize} applies to any semiorthogonal decomposition $\D^b(M) = \sod{\cA,\cB}$
\end{ex}


\subsection{Autoequivalences of complete intersections}
\label{subsect:complete_intersections}
Suppose $X_s \subset X$ is defined by the vanishing of a regular section $s$ of a vector bundle $\cV^\vee$.  In this section, we will use a standard construction to produce autoequivalences of $\D(X_s)$ from variations of GIT for the total space of $\cV$.  This forms a counterpart to \cite[Sections 4,5]{BFK}, where equivalences between different complete intersections are considered.

We are interested in the case where the total space of $\cV$ is Calabi-Yau.  If $X= \bP^n$ and $\cV$ is completely decomposable, this is equivalent to $X_s$ being Calabi-Yau.  Since $X_s$ is defined by a regular section, the Koszul complex $(\wedge^\bullet \cV, d_s)$ is a resolution of $\cO_{X_s}$. The key ingredient in this discussion is an equivalence of categories between $\D(X_s)$ and a category of generalized graded matrix factorizations associated to the pair $(\cV,s)$.

We call the data $(\fX,W)$ where $\fX$ is a stack equipped with a $\bC^*$ action factoring through the squaring map and $W$ is a regular function of weight 2 a \emph{Landau-Ginzburg (LG) pair}.  Let $\pi:\cV \to X$ be the vector bundle structure map.  There is an obvious action of $\bC^*$ on $\cV$ by scaling along the fibers of $\pi$.  We equip $\cV$ instead with the square of this action, so that $\lambda$ acts as scaling by $\lambda^2$.  Since $s$ is a section of $\cV^\vee$, it defines a regular function $W$ on $\cV$ that is linear along the fibers of $\pi$.  By construction it has weight 2 for the $\bC^*$ action.  The total space of $\cV|_{X_s}$ is $\bC^*$-invariant and when we equip $X_s$ with the trivial $\bC^*$ action we obtain a diagram 
$$ \xymatrix{ \cV|_{X_s} \ar[r]^i \ar[d]^\pi & \cV \\ X_s & } $$
of LG pairs where the potentials on $\cV|_{X_s}$ and $X_s$ are zero.

The category of curved coherent sheaves on an LG pair $\D(\fX,W)$ is the category whose objects are $\bC^*$-equivariant coherent sheaves $\cF$ equipped with an endomorphism $d$ of weight 1 such that $d^2 = W \cdot \id$; and whose morphisms are obtained by a certain localization procedure.  The maps in the above diagram induce functors 
$$ \xymatrix{ \D(X_s) = \D(X_s,0) \ar[r]^(.6){\pi^*} & \D(\cV|_{X_s},0) \ar[r]^{i_*} & \D(\cV,W) } $$
whose composite $i_* \pi^*$ is an equivalence. (For details, see \cite{Is,S12}.)

Suppose that $V$ is a smooth quasiprojective variety with an action of a reductive algebraic group $G \times \bC^*$, where $\bC^*$ acts through the squaring map.  Let $W$ be a regular function on $V$ which is $G$ invariant and has weight 2 for $\bC^*$.  Suppose that $\cL$ is a $G \times \bC^*$ equivariant line bundle so that $(\cV,W) \cong (V^{ss}(\cL)/G, W)$ equivariantly for the $\bC^*$ action.  For simplicity assume that $V^u(\cL)$ consists of a single KN stratum $S$ with 1 PSG $\lambda$.  Let $Z$ be the fixed set for $\lambda$ on this stratum and $Y$ its blade.  Write $\sigma:Z \to V$ for the inclusion.  As above we define full subcategories of $\D(V/G,W)$.  Let $\G_w \subset \D(V/G,W)$ be the full subcategory of objects isomorphic to objects of the form $(\cE,d)$ where $\sigma^*\cE$ has $\lambda$-weights in $[w,w+\eta)$.  We also define the larger subcategory $\cC_w$ where the weights lie in $[w,w+\eta]$.  The analysis for the derived category can be adapted to the category of curved coherent sheaves \cite{BFK} and we see that $\G_w$ is admissible in $\cC_w$.  The maps $i:Y \to V$ and $p:Y \to Z$ induce functors $p^*:\D(Z/L,W|_Z) \to \D(Y/P,W|_Y)$ and $i_*:\D(Y/P,W|_Y) \to \D(V/G,W)$.  Let $\D(Z/L,W|_Z)_w$ be the full subcategory of curved coherent sheaves concentrated in $\lambda$-weight $w$.  Then $i_*p^*:\D(Z/L,W|_Z)_w \to \D(V/G,W)$ is fully faithful and has image $\cA_w$.

We now consider a balanced wall crossing which exchanges $\lambda = \lambda^+$ with $\lambda^{-1} = \lambda^-$ and $S = S^+$ for $S^-$.  Then we obtain wall crossing equivalences.  Since $\cC_w$ and $\G_w$ are defined by weight conditions, as above we see that $\cC^+_w = \cC^-_{-w - \eta}$ and $\G^-_w$ is the left orthogonal to $\cA_w$.  Therefore, the window shift autoequivalence in this context is still realized by a mutation.

\begin{ex}\label{ex:K3}
We consider a K3 surface $X$ obtained as a complete intersection of type $(2,0),(1,3)$ in $X=\bP^2 \times \bP^2$.  It is well known that line bundles on a K3 surface are spherical.  We will see that the window shift automorphisms of $\D(X)$ coming from VGIT as above are the compositions of spherical twists around $\cO_{X}(i,0)$ then $\cO_{X}(i+1,0)$ or around $\cO_{X}(0,i),\cO_{X}(0,i+1),$ and $\cO_{X}(0,i+2)$.  

Let $\cV = \cO(-2,0) \oplus \cO(-1,-3)$.  Recall that the total space of $\cV$ is a toric variety which can be obtained as a GIT quotient of $\bA^8$ by $(\bC^*)^2$ under the action
$$ (t,s) \mapsto (t,t,t,s,s,s,t^{-2},t^{-1}s^{-3}). $$
We also recall that the wall and chamber decomposition of $\bR^2$ associated to this action is given in the following diagram.
\begin{figure}[h]
\centering
\begin{tikzpicture}[x=.5mm, y=.5mm, inner xsep=0pt, inner ysep=0pt, outer xsep=0pt, outer ysep=0pt]

\draw[style={gray, very thin},step=10] (-40,-40) grid (40,40);

\draw[line width=.5mm] (0,0)--(-40,0);
\draw[line width=.5mm] (0,0)--(0,-40);
\draw[line width=.5mm] (0,0)--(40,0);
\draw[line width=.5mm] (0,0)--(13.33,40);

\node at (-45,0) {$W_2$};
\node at (0,-45) {$W_1$};

\node at (-35,-35) {I};
\node at (-35,35) {II};
\node at (35,35) {III};
\node at (35,-35) {IV};

\end{tikzpicture}
\end{figure}

Chamber I corresponds to $\op{tot} \cV$ and we will analyze the autoequivalences of $X_s$ that come from the walls $W_1$ and $W_2$.  The window shift autoequivalences of $\bD^b(\op{tot} \cV)$ coming from $W_1$ do not factor because the associated $Z/L$ is not compact.  However, in the presence of a potential, $Z/L$ becomes compact.  In fact, the associated Landau-Ginzburg model actually admits a full exceptional collection.  To proceed we must compute the KN stratifications near the walls.  Write $V_{\bullet}$ for the locus defined by the vanishing of the variables occurring in $\bullet$.  (So $V_x$ is the locus where all of the $x_i$ are zero.) We obtain the table below in which we indicate the ordering of the strata, the attached destabilizing one parameter subgroup, its fixed set and the entire stratum.  

\begin{table}[h]
\begin{tabular}{c|c} 
\multicolumn{2}{c}{Near $W_1$} \\ 
Chamber I & Chamber IV \\ \hline
\begin{tabular}{c|c|c|c}
& \text{1-PSG} & \text{fixed set} & \text{stratum} \\ \hline
\text{strat. 1} & $\lambda_0$ & 0 & $V_{xy}$\\
\text{strat. 2} & $(0,-1)$ & $V_{yq}\setminus V_x $ & $V_y \setminus V_x$ \\
\text{strat. 3} & $(-1,0)$ & $V_{xpq}\setminus V_{xy}$ & $V_x \setminus V_{xy}$ \\
\end{tabular} 
&
\begin{tabular}{c|c|c}
\text{1-PSG} & \text{fixed set} & \text{stratum} \\ \hline
$\lambda_0$ & 0 & $V_{yp}$\\
$(0,-1)$ & $V_{yq}\setminus V_x $ & $V_y \setminus V_x$ \\
$(1,0)$ & $V_{xpq}\setminus V_{xy}$ & $V_{pq} \setminus V_y$ \\
\end{tabular} 
\\ 
\vspace{1cm} &  \\
\multicolumn{2}{c}{Near $W_2$} \\  
Chamber I & Chamber II  \\ \hline
\begin{tabular}{c|c|c|c}
& \text{1-PSG} & \text{fixed set} & \text{stratum} \\ \hline
\text{strat. 1} & $\lambda_0$ & 0 & $V_{xy}$\\
\text{strat. 2} & $(-1,0)$ & $V_{xpq}\setminus V_{xy}$ & $V_x \setminus V_{xy}$ \\
\text{strat. 3} & $(0,-1)$ & $V_{yq}\setminus V_x $ & $V_y \setminus V_x$ \\
\end{tabular}
&
\begin{tabular}{c|c|c}
\text{1-PSG} & \text{fixed set} & \text{stratum} \\ \hline
$\lambda_0$ & 0 & $V_x$ \\
$(-1,0)$ & $V_{xpq}\setminus V_{xy}$ & $V_x \setminus V_{xy}$ \\
$(0,1)$ & $V_{yq} \setminus V_x $ & $V_q \setminus V_x$ \\
\end{tabular} \\
\vspace{1cm}
\end{tabular} 
\caption{The Kirwan-Ness stratification for $T$ acting on $\bA^8$}
\end{table}

Consider the potential $W = p f + g q \in \bC[x_i,y_i,p,q]_{i=0}^2$, where $f \in \bC[x_i]$ is homogeneous of degree 2 and $g \in \bC[x_i,y_i]$ is homogeneous of degree $(1,3)$.  In order to define an LG pair, we must also specify a second grading on $\bC[x_i,y_i,p,q]$.  We define the LG weights of $p$ and $q$ to be 2 and the LG weights of $x_i$ and $y_i$ to be 0.  Assume that $f$ defines a smooth rational curve in $\bP^2$.  In order to proceed, we need to introduce a particular type of curved coherent sheaf.  Consider a line bundle $\cL$ on an LG pair which is equivariant for the $\bC^*$ action.  Given sections $a \in \Gamma(\cL)$ and $b \in \Gamma(\cL^\vee)$ of weight 1, we form a curved coherent sheaf for the potential $b(a)$:
$$ \xymatrix{ \cO \ar@/^/[r]^a & \cL \ar@/^/[l]^b, } \quad \text{i.e.} \quad d = \begin{pmatrix} 0 & b \\ a & 0 \end{pmatrix}, $$
and denote it by $\{a,b\}$.  We also write $\cO_{triv} = \{1,W\}$ (where $1,W$ are weight 1 section and co-section of $\cO(-1)_{LG}$).  This object is isomorphic to zero in the category of curved coherent sheaves.

Let us analyze what happens near $W_1$.  First, we have computed that for the least unstable stratum 
$$ Z_1/L_1= (V_{xpq} \setminus V_{xy}) / T \cong \bP^2 / \bC^*. $$
Next, we notice that $W|_{Z_1} = 0$ and that $Z_1$ is contained in the fixed set for the LG $\bC^*$ action.  Therefore the category $\D(Z_1/L_1,W|_{Z_1}) \cong \D(\bP^2/\bC^*)$ and for any $w$ we have $\D(\bP^2/\bC^*)_w \cong \D(\bP^2)$.  It is well known that $\D(\bP^2)$ admits a full exceptional collection of length 3.  For example $\D(\bP^2) = \sod{ \cO,\cO(1),\cO(2) }$.  By the curved analog of Proposition \ref{prop:window_shift_formula}, we compute the spherical object associated to $\cO(i)$ on $\bP^2$ by pulling it back to $V_{pq} \setminus V_y$, pushing it forward to $V \setminus V_y$, then restricting it to $\cV = (V \setminus V_x \cup V_y )/T$.  The locus $V_{pq}$ restricts to the zero section of $\cV$, which we also denote by $X$.  The object corresponding to $\cO(i)$ on $Z_1/L_1$ is the line bundle $\cO_X(0,i)$, viewed as a curved coherent sheaf supported on the zero section.  This object corresponds to an object of $\D(X)$.  To compute this object we observe that there are short exact sequences
$$
\xymatrix{ 
0 \ar[r] & \cO_{triv} \otimes \{q,g\} \ar[r] & \{p,f\} \otimes \{q,g\} \ar[r] & \cO_{p=0} \otimes \{q,g\}  \ar[r] & 0 \\
0 \ar[r] & (\cO_{p=0})_{triv} \ar[r] & \cO_{p=0} \otimes \{q,g\} \ar[r] & \cO_S \ar[r] & 0 
}
$$
This implies that $\cO_X(0,i)$ is equivalent to $\{p,f\} \otimes \{q,g\}\otimes \cO(0,i)$ in $\D(\cV,W)$.  Using the analogous short exact sequences for $f$ and $g$ we see that it is also equivalent to $\cO_{\cV|_Y}(0,i)$.  However, this is the image of $\cO_{X}(0,i)$ under the equivalence $\D(X) \cong \D(\cV,W)$.

Next, we consider the wall $W_2$.  In this case, we have
$$ Z_2/L_2 = (V_{yq} \setminus V_x ) / T \cong ( \op{tot} \cO_{\cP^2} (-2) )/ \bC^*. $$
Moreover $W|_{Z_2} = pf$.  So we have $\D(Z_2/L_2,W|_{Z_2}) \cong \D(C/\bC^*)$, where $C\subset\bP^2$ is the rational curve defined by $f$.  This means that for any fixed $w$, $\D(Z_2/L_2,W|_{Z_2})_w \cong \D(\bP^1)$.  Of course, we have $\D(\bP^1) = \sod{\cO,\cO(1)}$.  We play a similar game to compute the objects in $\D(Y)$ corresponding to these line bundles.  First, $\cO_C(i)$ corresponds to the curved coherent sheaf $\cO_{\op{tot} \cO_C(-2)}(i)$ on $Z_2/L_2$.  We push this forward and restrict to $\cV$ to get the line bundle $\cO(i,0)$ on the locus $\{q=f=0\}$.  By considering short exact sequences as in the previous case, we see that these objects correspond to the objects $\cO_{X}(i,0)$ in $\D(X)$.
\end{ex}


\begin{bibdiv}
\begin{biblist}

\bib{A}{book}{
   author={Anno, Irina},
   title={Weak representation of tangle categories in algebraic geometry},
   note={Thesis (Ph.D.)--Harvard University},
   publisher={ProQuest LLC, Ann Arbor, MI},
   date={2008},
   pages={79},
   isbn={978-0549-61433-3},
}

\bib{AL}{article}{
  author={Anno, Rina},
  author={Logvinenko, Timothy},
  title={Spherical DG-functors},
  journal={ArXiv E-prints: 1309.5035},
  year={2013}
}

\bib{AHK}{article}{
    AUTHOR = {Aspinwall, Paul S. and Horja, R. Paul and Karp, Robert L.},
     TITLE = {Massless {D}-branes on {C}alabi-{Y}au threefolds and
              monodromy},
   JOURNAL = {Comm. Math. Phys.},
    VOLUME = {259},
      YEAR = {2005},
    NUMBER = {1},
     PAGES = {45--69},
      ISSN = {0010-3616},
       DOI = {10.1007/s00220-005-1378-6},
       URL = {http://dx.doi.org/10.1007/s00220-005-1378-6},
}

\bib{AKO}{article}{
   author={Auroux, Denis},
   author={Katzarkov, Ludmil},
   author={Orlov, Dmitri},
   title={Mirror symmetry for weighted projective planes and their
   noncommutative deformations},
   journal={Ann. of Math. (2)},
   volume={167},
   date={2008},
   number={3},
   pages={867--943},
   issn={0003-486X},
   doi={10.4007/annals.2008.167.867},
}

\bib{BFK}{article}{
author={Ballard, M.},
author={Favero, D.},
author={Katzarkov, L.},
title={Variation of geometric invariant theory quotients and derived categories},
journal={ArXiv e-prints:1203.6643},
date={2012}
}

\bib{Bo89}{article}{
   author={Bondal, A. I.},
   title={Representations of associative algebras and coherent sheaves},
   language={Russian},
   journal={Izv. Akad. Nauk SSSR Ser. Mat.},
   volume={53},
   date={1989},
   number={1},
   pages={25--44},
   issn={0373-2436},
   translation={
      journal={Math. USSR-Izv.},
      volume={34},
      date={1990},
      number={1},
      pages={23--42},
      issn={0025-5726},
   },
}

\bib{CK}{book}{
   author={Cox, David A.},
   author={Katz, Sheldon},
   title={Mirror symmetry and algebraic geometry},
   series={Mathematical Surveys and Monographs},
   volume={68},
   publisher={American Mathematical Society},
   place={Providence, RI},
   date={1999},
   pages={xxii+469},
   isbn={0-8218-1059-6},
}

\bib{CLS}{book}{
    AUTHOR = {Cox, David A.},
author={ Little, John B.},
author={Schenck, Henry K.},
     TITLE = {Toric varieties},
    SERIES = {Graduate Studies in Mathematics},
    VOLUME = {124},
 PUBLISHER = {American Mathematical Society},
   ADDRESS = {Providence, RI},
      YEAR = {2011},
     PAGES = {xxiv+841},
      ISBN = {978-0-8218-4819-7},
}

\bib{DH}{article}{
   author={Dolgachev, Igor V.},
   author={Hu, Yi},
   title={Variation of geometric invariant theory quotients},
   note={With an appendix by Nicolas Ressayre},
   journal={Inst. Hautes \'Etudes Sci. Publ. Math.},
   number={87},
   date={1998},
   pages={5--56},
   issn={0073-8301},
}

\bib{DS}{article}{
   author={Donovan, Will},
   author={Segal, Ed},
   title={Window shifts, flop equivalences and Grassmannian twists},
   journal={Compos. Math.},
   volume={150},
   date={2014},
   number={6},
   pages={942--978},
   issn={0010-437X},
   review={\MR{3223878}},
   doi={10.1112/S0010437X13007641},
}

\bib{GKZ}{book}{
    AUTHOR = {Gel{$'$}fand, I. M.}, 
author={Kapranov, M. M.}, 
author={Zelevinsky, A. V.},
     TITLE = {Discriminants, resultants, and multidimensional determinants},
    SERIES = {Mathematics: Theory \& Applications},
 PUBLISHER = {Birkh\"auser Boston Inc.},
   ADDRESS = {Boston, MA},
      YEAR = {1994},
     PAGES = {x+523},
      ISBN = {0-8176-3660-9},
       DOI = {10.1007/978-0-8176-4771-1},
       URL = {http://dx.doi.org.proxy.lib.umich.edu/10.1007/978-0-8176-4771-1},
}

\bib{GS}{article}{
   author={Geraschenko, Anton},
   author={Satriano, Matthew},
   title={Toric stacks I: The theory of stacky fans},
   journal={Trans. Amer. Math. Soc.},
   volume={367},
   date={2015},
   number={2},
   pages={1033--1071},
   issn={0002-9947},
   review={\MR{3280036}},
   doi={10.1090/S0002-9947-2014-06063-7},
}

\bib{HL12}{article}{
   author={Halpern-Leistner, Daniel},
   title={The derived category of a GIT quotient},
   journal={J. Amer. Math. Soc.},
   volume={28},
   date={2015},
   number={3},
   pages={871--912},
   issn={0894-0347},
   review={\MR{3327537}},
   doi={10.1090/S0894-0347-2014-00815-8},
}

\bib{HHP09}{article}{
   author={Herbst, M.},
   author={Hori, K.},
   author={Page, D.},
   title={B-type D-branes in toric Calabi-Yau varieties},
   conference={
      title={Homological mirror symmetry},
   },
   book={
      series={Lecture Notes in Phys.},
      volume={757},
      publisher={Springer},
      place={Berlin},
   },
   date={2009},
   pages={27--44},
}

\bib{Ho05}{article}{
   author={Horja, R. Paul},
   title={Derived category automorphisms from mirror symmetry},
   journal={Duke Math. J.},
   volume={127},
   date={2005},
   number={1},
   pages={1--34},
   issn={0012-7094},
   doi={10.1215/S0012-7094-04-12711-3},
}
   
\bib{Ir}{article}{
   author={Iritani, Hiroshi},
   title={An integral structure in quantum cohomology and mirror symmetry
   for toric orbifolds},
   journal={Adv. Math.},
   volume={222},
   date={2009},
   number={3},
   pages={1016--1079},
   issn={0001-8708},
   doi={10.1016/j.aim.2009.05.016},
}

\bib{Is}{article}{
   author={Isik, Mehmet Umut},
   title={Equivalence of the Derived Category of a Variety with a Singularity Category},
   journal={Int. Math. Res. Notices},
   date={2013},
   volume={2013},
   number={12},
   pages={2787--2808}
}

\bib{K}{article}{
   author={Kawamata, Yujiro},
   title={$D$-equivalence and $K$-equivalence},
   journal={J. Differential Geom.},
   volume={61},
   date={2002},
   number={1},
   pages={147--171},
   issn={0022-040X},
}
		
\bib{Se}{article}{
   author={Segal, Ed},
   title={Equivalence between GIT quotients of Landau-Ginzburg B-models},
   journal={Comm. Math. Phys.},
   volume={304},
   date={2011},
   number={2},
   pages={411--432},
   issn={0010-3616},
   doi={10.1007/s00220-011-1232-y},
}

\bib{ST01}{article}{
   author={Seidel, Paul},
   author={Thomas, Richard},
   title={Braid group actions on derived categories of coherent sheaves},
   journal={Duke Math. J.},
   volume={108},
   date={2001},
   number={1},
   pages={37--108},
   issn={0012-7094},
   doi={10.1215/S0012-7094-01-10812-0},
}

\bib{S12}{article}{
author={Shipman, Ian},
title={A geometric approach to Orlov's theorem},
journal={Comp. Math.},
volume={148},
number={5},
pages={1365--1389},
date={2012},
doi={10.1112/S0010437X12000255}
}

\end{biblist}
\end{bibdiv}

\end{document}